\documentclass[11pt]{article}
\usepackage{geometry}                
\geometry{letterpaper}                   
\usepackage{graphicx}
\usepackage{amssymb}
\usepackage{epstopdf}
\usepackage[T1]{fontenc}
\usepackage[ngerman, english]{babel}
\usepackage{mathtools}
\usepackage{bbm}
\usepackage{mathrsfs}
\usepackage{amsmath,amscd}
\usepackage{tikz}
\usepackage{tikz-cd}
\usepackage{babel}
\usetikzlibrary{arrows, matrix}
\usetikzlibrary{cd}
\usepackage[arrow, matrix, curve]{xy}

\newcommand{\qed}{\hfill \ensuremath{\Box}}
\newenvironment{proof}{\vspace{1ex}\noindent{\it Proof.}\hspace{0.5em}}
	{\hfill\qed\vspace{1ex}}

\DeclareGraphicsRule{.tif}{png}{.png}{`convert #1 `dirname #1`/`basename #1 .tif`.png}
\newtheorem{theorem}{Theorem}[section]
\newtheorem{lemma}[theorem]{Lemma}
\newtheorem{proposition}[theorem]{Proposition}
\newtheorem{corollary}[theorem]{Corollary}

\newtheorem{definition}[theorem]{Definition}

\DeclareMathOperator{\Gal}{\operatorname{Gal}}
\DeclareMathOperator{\Q}{\mathbf{Q}}

\DeclareMathOperator{\Z}{\mathbf{Z}}

\DeclareMathOperator{\N}{\mathbf{N}}

\DeclareMathOperator{\im}{\mathrm{im}}
\DeclareMathOperator{\Spec}{\operatorname{Spec}}
\DeclareMathOperator{\Proj}{\operatorname{Proj}}

\DeclareMathOperator{\Hom}{\operatorname{Hom}}

\DeclareMathOperator{\Aut}{\operatorname{Aut}}

\DeclareMathOperator{\Og}{\mathcal{O}}

\DeclareMathOperator{\Pic}{\mathrm{Pic}}

\DeclareMathOperator{\rk}{\mathrm{rk}}

\DeclareMathOperator{\et}{\acute{\mathrm{e}}{\mathrm{t}}}
\DeclareMathOperator{\Gm}{\mathbf{G}_m}

\DeclareMathOperator{\ev}{\mathrm{ev}}

\title{Degeneration of Kummer surfaces}
\author{Otto Overkamp}
\date{}
\begin{document}
\maketitle
{\abstract{We prove that a Kummer surface defined over a complete strictly Henselian discretely valued field $K$ of residue characteristic different from 2 admits a strict Kulikov model after finite base change. The Kulikov models we construct will be schemes, so our results imply that the semistable reduction conjecture is true for Kummer surfaces in this setup, even in the category of schemes. Our construction of Kulikov models is closely related to an earlier construction of K\"unnemann, which produces semistable models of Abelian varieties. It is well-known that the special fibre of a strict Kulikov model belongs to one of three types, and we shall prove that the type of the special fibre of a strict Kulikov model of a Kummer surface and the toric rank of a corresponding Abelian surface are determined by each other. We also study the relationship between this invariant and the Galois representation on the second $\ell$-adic cohomology of the Kummer surface. Finally, we apply our results, together with earlier work of Halle-Nicaise, to give a proof of the monodromy conjecture for Kummer surfaces in equal characteristic zero.}}
\tableofcontents
\section{Introduction}
Let $\Og_K$ be a complete discrete valuation ring with maximal ideal $\mathfrak{m}_K$ and algebraically closed residue field $k$ of characteristic $p\not=2.$ Let $K$ be the field of fractions of $\Og_K.$ Let $A$ be an Abelian surface over $K$, and let $X$ be the associated Kummer surface. In other words, let $X$ be the quotient the blow-up of $A$ in $A[2]$ by the action of $\Z/2\Z$ given by the involution $[-1]$. Then $X$ is a smooth surface over $K$ which turns out to be a K3 surface. (See \cite{B}, Chapter 10, Theorem 10.6 for this particular result, and \cite{Huy} for a more general introduction to K3 surfaces). This surface is called the \it Kummer surface associated with \rm $A$. The purpose of the present paper is to study the existence and properties of \it (strict) Kulikov models \rm of the Kummer surface $X$. By definition, a strict Kulikov model of $X$ (or a more general K3 surface) is a regular algebraic space $\mathscr{X}$ which is proper and flat over $\Og_K$, is a model of $X$, which has the property that its special fibre is a reduced divisor with strict normal crossings (see the definition below) on $\mathscr{X}$, and such that the relative dualizing sheaf $\omega_{\mathscr{X}/\Og_K}$ is trivial. Models of this kind were originally studied by Kulikov (\cite{Ku}) and Persson-Pinkham (\cite{Per}, \cite{PerPink}) in the context of complex-analytic geometry. It can be shown that the special fibre of such a model, if it exists, is a \it combinatorial K3 surface \rm (which we shall define later), and that every combinatorial K3 surface belongs to one of three types. One can also show that the type only depends upon the $\ell$-adic Galois representation $H^2_{\et}(X_{\overline{K}}, \Q_\ell)$, and hence only upon the generic fibre of $\mathscr{X}$. Our first main result is
\begin{theorem} \rm(Theorem \ref{Kummersemistabletheorem}) \it
Let $A$ be an Abelian surface over $K$ and let $X$ be the associated Kummer surface. Then there exists a finite extension $F$ of $K$ such that there exists a strict Kulikov model $\mathscr{X}\to \Spec\Og_F$ of $X_F:=X\times_K\Spec F.$ Moreover, the Kulikov model $\mathscr{X}$ which we construct is a scheme. 
\end{theorem}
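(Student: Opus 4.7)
The plan is to construct a strict Kulikov model of $X_F$ by imitating the generic-fibre Kummer construction at the integral level: start with a semistable model of the Abelian surface $A_F$ after finite base change, extend the involution $[-1]$, blow up the schematic closure of the 2-torsion, and pass to the quotient.

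After a suitable finite extension $F/K$, K\"unnemann's work provides a projective semistable model $\mathscr{A}\to\Spec\Og_F$ of $A_F$. I would choose $F$ large enough so that (i) all sixteen $2$-torsion points of $A_F$ are rational and extend to disjoint sections $\sigma_1,\dots,\sigma_{16}\colon\Spec\Og_F\to\mathscr{A}$ whose images lie in the smooth locus of $\mathscr{A}$ and specialize to pairwise distinct points of $\mathscr{A}_k$, and (ii) the involution $[-1]$ on $A_F$ extends to an involution $\iota$ on $\mathscr{A}$. Both extensions should follow from the toroidal nature of K\"unnemann's construction together with the equivariance of $[-1]$ with respect to the defining toric and polyhedral data. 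Setting $\mathscr{T}:=\bigsqcup_i\sigma_i(\Spec\Og_F)$, I would blow up $\mathscr{T}$ to obtain $\pi\colon\tilde{\mathscr{A}}\to\mathscr{A}$, lift $\iota$ to an involution $\tilde{\iota}$ on $\tilde{\mathscr{A}}$, and define $\mathscr{X}:=\tilde{\mathscr{A}}/\langle\tilde{\iota}\rangle$. The verifications for $\mathscr{X}$ to be a strict Kulikov model are then: regularity of $\mathscr{X}$ (from the fact that $\tilde{\iota}$ has regular divisorial fixed locus, so the finite quotient by an involution is again regular since $2\in\Og_F^\times$); reducedness and SNC property of the special fibre (inherited from $\mathscr{A}$ since the blow-up locus sits in the smooth part of $\mathscr{A}$ and the quotient map is \'etale away from the ramification divisor); triviality of $\omega_{\mathscr{X}/\Og_F}$ (combining triviality of $\omega_{\mathscr{A}/\Og_F}$ with the Riemann--Hurwitz-type contributions of the blow-up and the quotient, which cancel in the Kummer recipe); and that $\mathscr{X}$ is a scheme (as the finite free quotient of a quasi-projective $\Og_F$-scheme).

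The main obstacle, I expect, is arranging items (i) and (ii) simultaneously and putting $\iota$ into the standard local form $(x,y,z)\mapsto(-x,-y,-z)$ in suitable regular parameters at each $\sigma_i(\Spec k)$. A priori the closures of distinct $2$-torsion points of $A_F$ can meet the same component of $\mathscr{A}_k$, or even collide along the special fibre, and $\iota$ may not act diagonally in obvious coordinates near the singular locus of $\mathscr{A}$. Resolving this presumably requires enlarging $F$ so that the component group $\Phi$ of the N\'eron model of $A$ contains enough $2$-torsion to separate the specializations of the $\sigma_i$, together with an $\iota$-equivariant refinement of the polyhedral data underlying K\"unnemann's model so that the sections $\sigma_i$ land in the interiors of distinct smooth-locus strata on which $\iota$ has a manifest toric description. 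Once these local normal forms are in place, all the verifications above reduce to the classical Kummer calculation, now performed over $\Og_F$ rather than over a field.
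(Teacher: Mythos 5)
Your overall strategy is the same as the paper's: pass to a finite extension where $A$ has semiabelian reduction and $A[2]$ is constant, take K\"unnemann's projective strictly semistable model $P$ containing the N\'eron model, extend $[-1]$, blow up the closure of the $2$-torsion, and divide by the involution. However, you have misidentified where the difficulty lies, and the step you treat as routine is in fact the heart of the matter. The issue is not whether the sixteen $2$-torsion sections collide or land in the smooth locus --- they automatically extend to the \'etale finite scheme $\mathscr{A}[2]$ inside the N\'eron model $\mathscr{A}\subseteq P^{\mathrm{sm}}$, and distinct sections of a separated \'etale scheme have disjoint images since $p\neq 2$. The real danger is that the extended involution on $P$ could have \emph{additional} fixed points on the non-smooth boundary strata of the special fibre, i.e.\ on the strata of $P_k$ indexed by cones of dimension $\geq 2$ in K\"unnemann's combinatorial description. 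Blowing up $\mathscr{A}[2]$ does nothing near such points, so the quotient would acquire singularities there and $\mathscr{X}$ would fail to be regular. Your proposal asserts that the fixed locus of the lifted involution is a regular divisor without ever ruling this out.

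The paper's proof of this point occupies most of its Section 3 and is not formal: one shows that the constancy of $A[2]$ forces each $\iota(y)$, $y\in Y$, to be a square in $\tilde{G}(K)$ (Lemma \ref{squarelemma}), constructs evaluation homomorphisms $\ev_\xi\colon\tilde{G}(K)\to\Z$ satisfying $b(y,-\xi)=\ev_\xi(\iota(y))$ (Proposition \ref{bevproposition}), deduces that the pairing $b$ takes only even values (Proposition \ref{evenproposition}), and then runs a combinatorial argument (Lemma \ref{conefreelemma}) showing that $H=\{\mathrm{Id},[-1]\}$ acts freely on the $Y$-orbits of cones of dimension $\geq 2$, whence the fixed locus is exactly $\mathscr{A}[2]$ (Theorem \ref{fixedlocustheorem}). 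Your appeal to the $2$-torsion of the component group $\Phi$ gestures in the right direction but does not substitute for this chain of arguments. A second, smaller gap: for the model to be a \emph{strict} Kulikov model you must show the irreducible components of $\mathscr{X}_k$ are smooth, which requires ruling out that $[-1]$ interchanges two components of $P_k$ that meet; the paper handles this with a further cone computation inside the proof of Theorem \ref{Kummersemistabletheorem}, and your proposal does not address it. The remaining verifications (regularity of the quotient away from the \'etale locus, descent of the $2$-form, $j_\ast$ across a codimension-$2$ complement) are as you describe.
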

This implies in particular that Kummer surfaces admit potentially semistable reduction in the category of schemes. For the proof of this result it is clearly harmless to assume from the beginning that $A$ has semiabelian reduction, and that the $K$-group scheme $A[2]$ is constant. The strategy for proving this result is as follows: First, we shall recall some of the results from K\"unnemann's celebrated paper \cite{Kü}. Among other things, K\"unnemann shows that there exists a finite extension $F$ of $K$ and a regular projective model $P$ of $A_F$ which has strict semistable reduction, and such that the involution $[-1]$ extends uniquely to $P$. Furthermore, the model $P$ of $A_F$ will contain the Néron model $\mathscr{A}_F$ of $A_F$ over $\Spec\Og_F$. The main step of our proof will consist in showing that the fixed locus of the involution of $P$ extending $[-1]$ coincides with the étale group scheme $\mathscr{A}_F[2]\subseteq P.$ From this it will follow that the singularities of the quotient of $P$ by the involution are mild enough to be resolved by blowing up a regular centre once, and that this blow-up will be our desired semistable model $\mathscr{X}$ of $X_F$. We shall also see that the model $P$ of $A_F$ is a strict Kulikov model of $A$. From this, we shall deduce that the model $\mathscr{X}$ of $X_F$ is, in fact, a strict Kulikov model. We shall then go on to studying the relationship between the degeneration behaviour of $A$ and that of $X$. In particular, we shall see that the type of the strict Kulikov model of $X_F$ can be read off from the toric rank of $A_F$ (which is a numerical invariant that will be introduced later). We shall also see that there is a close relationship between the dual complexes of the strict Kulikov models of $A_F$ and $X_F$. Kulikov models have been applied to the study of motivic zeta functions of K3-surfaces and the monodromy conjecture (\cite{HN3}, Definition 2.3.5), and our results will provide a new class of K3 surfaces which satisfy the monodromy property.\\
\\
$\mathbf{Remark.}$ Our first main theorem strengthens the following previously known result: Suppose that the residue characteristic of $K$ is at least 5 and that $X$ is a K3 surface over $K$ with Picard rank $12\leq \rho\leq 20.$ This applies in particular to Kummer surfaces, at least after a finite extension of the ground field. Then there exists a finite extension $L/K$ and a proper algebraic space $\mathscr{Y}\to \Spec \Og_K$ which is a strict Kulikov model of $X_L$ (see \cite{LM}, Proposition 3.1 together with \cite{Ito}, Propositions 2.3 and 2.4). In general, these algebraic spaces cannot be guaranteed to be schemes (see \cite{LM}, proof of Proposition 3.1), so this does not imply potential semistable reduction for Kummer surfaces in the category of schemes. Furthermore, the proof of this result in \it loc. cit. \rm relies on the (semistable) minimal model program and provides no control over the Kulikov models' special fibre. On the other hand, our construction is completely explicit, and also allows us to deal with Galois-equivariant (and non-strict) Kulikov models, which are essential for our applications to motivic Zeta functions. Such models do not seem to be accessible using the current methods. Furthermore, our method also applies in the case $p=3.$ The author is grateful to Professor C. Liedtke for bringing the paper \cite{Ito} to his attention.

\section{Preliminaries} \label{Prelimsection}
To avoid confusion, let us recall the definition of the notions of \it semistable reduction, divisor with normal crossings, \rm and \it divisor with strict normal crossings \rm which we shall use in this manuscript:
\begin{definition}
Let $S$ be a Noetherian scheme and let $D$ be an effective Cartier divisor on $S$. Let $D_1,..., D_r$ be the irreducible components of $D$ endowed with the structure of reduced closed subschemes of $S$. For each subset $J\subseteq \{1,...,r\},$ denote by $D_J$ the scheme-theoretic intersection $\cap_{j\in J} D_j,$ defining $D_{\emptyset}:=S.$\\
(i) We say that the divisor $D$ has \rm strict normal crossings \it if $D$ is reduced, for each point $s$ of $S$ contained in $D$, the local ring $\Og_{S,s}$ is regular, and if for each $\emptyset\not=J\subseteq \{1,..., r\}$, the scheme $D_J$ is regular and of codimension $\#J$ in $S$. \\
(ii) We say that the divisor $D$ has \rm normal crossings \it if $D$ has strict normal crossings locally in the étale topology.\\
(iii) Suppose $R$ is a discrete valuation ring with fraction field $L$ and perfect residue field. Let $X$ be a proper, smooth algebraic variety over $K$, and suppose that $\mathscr{X}\to \Spec R$ is a model of $X$. In particular, we fix an isomorphism between $X$ and $\mathscr{X}\times_R\Spec K$. We say that $\mathscr{X}$ is a \rm strictly semistable model \it of $X$ if it is flat, regular, and proper over $\Og_K$, and if the special fibre of $\mathscr{X}$ is a divisor with strict normal crossings on $\mathscr{X}.$ \\
(iv) We say that $\mathscr{X}$ is a \rm semistable model \it of $X$ if the special fibre of $\mathscr{X}$ has normal crossings, i.e., if $\mathscr{X}$ looks as in (iii) locally in the étale topology.
\end{definition}
This definition is the same as \cite{Kü}, (1.9). Observe that a divisor $D$ on $S$ which has normal crossings and whose irreducible components are regular has strict normal crossings. In particular, we can adapt this definition to the case where the model $\mathscr{X}$ is an algebraic space rather than a scheme: We say that $\mathscr{X}$ has semistable reduction if it admits an atlas whose special fibre is a divisor with normal crossings as in (ii), and we say that $\mathscr{X}$ has strictly semistable reduction if, in addition, the irreducible components of the special fibre of $\mathscr{X}$ are smooth over the residue field. Furthermore, we shall say that $X$ has \it strict semistable reduction \rm (resp. \it semistable reduction\rm) if $X$ admits a strictly semistable model (resp. a semistable model) which is a  \it proper scheme \rm over $R$.\\
In this chapter, we shall recall some basic material, mainly from \cite{Kü} and \cite{FC} (throughout, we shall follow the notation of \cite{Kü}).
\subsection{Projective models of Abelian surfaces after K\"unnemann}
\subsubsection{Various categories}
This section is devoted to recalling some basic material from \cite{Kü}, and adapting it for our purposes. For more details, the reader should consult \cite{Kü}. Let $G$ be a semiabelian scheme over $\Og_K$. By definition, this means that $G$ is a smooth separated group scheme of finite type over $\Og_K$ whose geometric fibres are extensions of Abelian varieties by algebraic tori. We shall always assume that $G$ is a model of our Abelian surface $A$. First recall the \it Raynaud extension \rm 
\begin{align}0\to T\to \tilde{G}\overset{\pi}{\to} E\to 0\label{Raynaudext}\end{align} associated with $G$, whose precise construction is explained in \cite{Kü}, 2.1. Here, $T$ is an algebraic torus, $E$ an Abelian scheme, and $\tilde{G}$ a semiabelian scheme over $\Og_K.$ Note that, in order to construct this extension, we need to choose a line bundle $\mathscr{L}$ on $G$ whose restriction $\mathscr{L}_\eta$ to $A=G_\eta$ is ample. The extension itself, however, is independent of the choice of $\mathscr{L}.$ The group scheme $\tilde{G}$ has the property that the formal completions of $\tilde{G}$ and of $G$ with respect to their special fibres are canonically isomorphic. The line bundle $\mathscr{L}$ induces a line bundle $\tilde{\mathscr{L}}$ on $\tilde{G}.$ From now on, we shall assume that all our line bundles have cubical structures (\cite{Kü}, (1.7)). Since the base scheme $\Spec \Og_K$ over which we are working is normal, choosing a cubical structure on a line bundle over a group scheme is equivalent to choosing a rigidification of this line bundle along the identity section.\\
We shall use the categories $\mathrm{DEG}^{\mathrm{split}}_{\mathrm{ample}}$ and $\mathrm{DD}^{\mathrm{split}}_{\mathrm{ample}}$ from \cite{Kü}. We refer the reader to \cite{Kü} for the precise definitions of these categories. Objects of the category $\mathrm{DEG}^{\mathrm{split}}_{\mathrm{ample}}$ of split ample degenerations are triples $(G, \mathscr{L}, \mathscr{M}),$ where $G\to \Spec\Og_K$ is a semiabelian scheme over $\Og_K,$ $\mathscr{L}$ a cubical invertible sheaf on $G$ with ample restriction to $A=G_\eta$, and $\mathscr{M}$ a cubical ample invertible sheaf on $E$ such that $\tilde{\mathscr{L}}=\pi^\ast\mathscr{M}.$ We shall also use the category $\mathrm{DD}^{\mathrm{split}}_{\mathrm{ample}}$ of split ample degeneration data, the objects of which are tuples
$$(E, X, Y, \phi, c, c^t, \tilde{G}, \iota, \tau, \tilde{\mathscr{L}}, \mathscr{M}, \lambda_E, \psi, a, b).$$
In this notation, $E$ stands for an Abelian scheme over $\Og_K.$ Furthermore, $X$ and $Y$ denote free Abelian groups of the same finite rank $r$, and $\phi\colon Y\to X$ is an injective homomorphism. Let $T$ be the torus $\Hom(X, \Gm).$ Then there is a canonical isomorphism $X^\ast(T)=X.$ Similarly, define $T'$ to be $\Hom(Y,\Gm).$ Next, $c$ and $c^t$ denote homomorphisms $c\colon X\to E^\vee(\Og_K)$ and $c^t\colon Y\to E(\Og_K).$ The morphism $c$ encodes an extension 
$$0\to T\to \tilde{G}\overset{\pi}{\to} E\to 0,$$ for some semiabelian scheme $\tilde{G}$ over $\Og_K$. Now $\iota$ is a homomorphism $\iota\colon Y\to \tilde{G}(K)$ such that $\pi\circ\iota=c^t.$ This $\iota$ is determined by a unique trivialization $\tau\colon \boldsymbol{1}_{(X\times Y)_\eta}\to (c\times c^t)^\ast \mathscr{P}_{E_\eta}^{-1}$ of biextensions. Here $\mathscr{P}_E$ denotes a rigidified Poincaré bundle on $E\times_{\Og_K} E^\vee.$  Next we choose a cubical ample invertible sheaf $\mathscr{M}$ on $E$ and put $\tilde{\mathscr{L}}:=\pi^\ast\mathscr{M}.$ We let $\lambda_E$ be the polarization $E\to E^\vee$ associated with $\mathscr{M}$, and let $\psi\colon\boldsymbol{1}_{Y_\eta} \to \iota^\ast \tilde{\mathscr{L}}_\eta$ be a trivialization of $\Gm$-torsors. Finally, we let $a$ and $b$ be a function $a\colon Y\to \Z$ and a bilinear pairing $b\colon Y\times X\to \Z$ which are determined by $\psi$ and $\tau$, respectively. These data are subject to several compatibility requirements which we have not mentioned at this point; the missing details can be found in \cite{Kü}, (2.2).

There is a natural functor
$$F\colon \mathrm{DEG}^{\mathrm{split}}_{\mathrm{ample}}\to\mathrm{DD}^{\mathrm{split}}_{\mathrm{ample}},$$ which turns out to be an equivalence of categories (\cite{Kü}, (2.8)). If $(G, \mathscr{L}, \mathscr{M})$ is an object of $\mathrm{DEG}^{\mathrm{split}}_{\mathrm{ample}}$, then $E$, $\tilde{G}$, and $\tilde{\mathscr{L}}$ (which appear in $F((G, \mathscr{L}, \mathscr{M}))$) come from the Raynaud extension described at the beginning of this paragraph. Furthermore, $X$ is defined to be $X^\ast(T),$ and $c^t$ encodes the Raynaud extension. 

 There is one further category which will be important in what follows, namely the category $\mathcal{C}$. Objects of this category are tuples
$(X,Y,\phi, a,b),$ where $X$ and $Y$ are free Abelian groups of the same finite rank, $\phi\colon Y\to X$ is an injective morphism, $a\colon Y\to \Z$ is a function with $a(0)=0,$ and $b\colon Y\times X\to \Z$ a bilinear pairing such that $b(-, \phi(-))$ is symmetric, positive definite, and satisfies
$$a(y+y')-a(y)-a(y')=b(y, \phi(y')).$$ 
A morphism $(X', Y', \phi', a', b')\to (X,Y,\phi,a,b)$ is a pair of morphisms $h_X\colon X\to X'$ and $h_Y\colon Y'\to Y$ such that $a'=a\circ h_Y,$ $b'(-, h_X(-))=b(h_Y(-),-),$ and $\phi'=h_X\circ \phi\circ h_Y.$ It follows from \cite{Kü}, (2.4) that the association 
$$(E, X, Y, \phi, c, c^t, \tilde{G}, \iota, \tau, \tilde{\mathscr{L}}, \mathscr{M}, \lambda_E, \psi, a, b) \mapsto (X,Y, \phi, a,b)$$ defines a functor
$$\mathrm{For}\colon \mathrm{DD}^{\mathrm{split}}_{\mathrm{ample}} \to \mathcal{C}.$$ One should note that the categories $\mathrm{DEG}^{\mathrm{split}}_{\mathrm{ample}}$ and $\mathrm{DD}^{\mathrm{split}}_{\mathrm{ample}}$ depend on the ground field $K$ whereas $\mathcal{C}$ is independent of the ground field. Just for the moment, we shall include the ground field in our notation, in order to state some results about the behaviour of the functor $\mathrm{For}$ under base change. Thereafter, we shall omit any reference to the ground field, as we have done before. Let $F/K$ be a finite extension of ramification index $\nu$. We obtain a base change functor
$$-\times_K\Spec F\colon \mathrm{DD}^{\mathrm{split}}_{\mathrm{ample},K}\to \mathrm{DD}^{\mathrm{split}}_{\mathrm{ample},F},$$ and similarly for $\mathrm{DEG}^{\mathrm{split}}_{\mathrm{ample}}.$ For any object $D$ of $\mathrm{DEG}^{\mathrm{split}}_{\mathrm{ample},K}$, if $\mathrm{For}(D)=(X,Y, \phi, a, b)$, then $\mathrm{For}(D\times_K\Spec F)$ is canonically isomorphic to $(X,Y, \phi, \nu\cdot a, \nu\cdot b)$ (see \cite{Kü}, (2.9)).\\
Now let $G:=\mathscr{A}^0,$ where $\mathscr{A}$ is the Néron model of the Abelian surface $A$. It follows from \cite{FC}, Chapter I, Proposition 2.5 and \cite{Ra}, Chapter XI, Théorème 1.13 that the forgetful functor from $\mathrm{DEG}^{\mathrm{split}}_{\mathrm{ample}}$ into the category of semiabelian schemes over $\Og_K$ is essentially surjective. Suppose that we have an object $(G, \mathscr{L}, \mathscr{M})$ of $\mathrm{DEG}^{\mathrm{split}}_{\mathrm{ample}}$ with $G=\mathscr{A}^0.$
\subsubsection{Group actions on degenerations}
Now let $H$ be a finite group which acts (from the left) on $\Og_K$. By an \it action of $H$ on $(G, \mathscr{L}, \mathscr{M})$ over the action on $\Og_K$ \rm we mean a system of homomorphisms
$$h^\ast(G, \mathscr{L}, \mathscr{M})\to (G,\mathscr{L}, \mathscr{M}),$$ for each $h\in H,$ such that the obvious compatibilities are satisfied (see \cite{Kü}, (2.10)). If the finite group $H$ acts on $G$ in a way compatible with the action on $\Og_K$, we may replace $\mathscr{L}$ by $\bigotimes_{h\in H}h^\ast\mathscr{L}$ (and similarly for $\mathscr{M}$) and assume that $H$ acts on the object $(G,\mathscr{L}, \mathscr{M})$ of $\mathrm{DEG}^{\mathrm{split}}_{\mathrm{ample}}$ over the action on $\Og_K$. From now on, we shall always assume that a pre-image $(G, \mathscr{L}, \mathscr{M})$ of $\mathscr{A}^0$ has been chosen on which $H:=\{\mathrm{Id}, [-1]\}$ acts. Only in the last chapter will we be interested in actions of the group $H=\{\mathrm{Id}, [-1]\}\times \boldsymbol{\mu}_d$ on $(G, \mathscr{L}, \mathscr{M})\times_K\Spec K(d)$ over the action of $H$ (via the second factor) on $\Og_{K(d)}.$ Here we consider the unique extension $K(d)$ of $K$ of degree $d$ and identify its Galois group (which will act from the left) with $\boldsymbol{\mu}_d$ for $d\in \N,$ $p\nmid d.$ If $H=\{\mathrm{Id}, [-1]\},$ then we shall always assume that $H$ acts trivially on the base ring.
\subsubsection{K\"unnemann's construction}
Given an object $(X,Y, \phi, a, b)$ of $\mathcal{C}$ on which the finite group $H$ acts, we obtain an action (from the left) of $H$ on $Y$, and an action (from the right) of $H$ on $X$. Put $\Gamma:=Y\rtimes H.$ Then $\Gamma$ acts on the free $\Z$-module $X^\vee\oplus\Z$ as
$$S_{(y,h)}((l,s)):=(l\circ h+sb(y,-), s),$$ as in \cite{Kü}, p.181. In $X^\vee_{\mathbf{R}}\oplus\mathbf{R},$ we have the cone $\mathscr{C}:=(X^\vee_{\mathbf{R}}\times\mathbf{R}_{>0})\cup\{0\}.$ We shall consider a smooth $\Gamma$-admissible rational polyhedral cone decomposition $\{\sigma_\alpha\}_{\alpha\in I}$ which admits a $\Gamma$-admissible $\kappa$-twisted polarization function $\phi\colon \mathscr{C}=\bigcup_{\alpha\in I} \sigma_{\alpha}\to \mathbf{R}$ for some $\kappa\in \N,$ as in \cite{Kü}, p.181. The following result can be assembled from various Theorems and Propositions in \cite{Kü}:
\begin{theorem}
Let the finite group $H$ act on $\Og_K$ from the left. Let $(G, \mathscr{L}, \mathscr{M})\in \mathrm{DEG}^{\mathrm{split}}_{\mathrm{ample}}$ and assume that $H$ acts on this object over the action on $\Og_K$. Let $(X,Y,\phi, a,b):=\mathrm{For}(F((G, \mathscr{L}, \mathscr{M})))$ and suppose we have a smooth $\Gamma$-admissible rational polyhedral cone decomposition $\{\sigma_{\alpha}\}_{\alpha\in I}$ of $\mathscr{C}\subseteq X^\vee_{\mathbf{R}}\oplus\mathbf{R}$. Assume further that this cone decomposition has the following properties:\\
(a) There exists a $\Gamma$-admissible $\kappa$-twisted polarization function $\phi$ for this decomposition,\\
(b) The cone decomposition is \rm semistable \it in the sense that the primitive element of any one-dimensional cone contained in this decomposition is of the form $(\ell,1)$ for some $\ell\in X^\vee.$\\
(c) The cone $\sigma_T=\{0\}\times\mathbf{R}_{\geq0}$ is contained in this decomposition, and\\
(d) For all $y\in Y\backslash \{0\}$ and $\alpha\in I$, we have
$$\sigma_{\alpha}\cap S_{(y,\mathrm{Id})}(\sigma_{\alpha})=\{0\}.$$ 
Then there exists a regular irreducible scheme $P$ which is projective and flat over $\Og_K$ (depending on $\{\sigma_{\alpha}\}_{\alpha\in I}$) and a line bundle $\mathscr{L}_P$ (depending on the polarization function $\phi$) such that the following holds:\\
(i) There is an isomorphism $P\times_{\Og_K} \Spec K\to A$ (which we shall keep fixed from now on), and the canonical morphism
$$P^{\mathrm{sm}}\to \mathscr{A}$$ is an isomorphism.\\
(ii) The action of $H$ on $G=\mathscr{A}^0$ over the action of $H$ on $\Og_K$ extends uniquely to $P$, and the restriction of $\mathscr{L}_P$ to $G$ is isomorphic to $\mathscr{L}^{\otimes\kappa}$.\\
(iii) Let $I^+$ be the set of orbits $I^+:=(I\backslash\{\{0\}\})/Y$. Then the reduced special fibre of $P$ has a stratification indexed by $I^+$. This stratification is preserved by the action of $H$, and the induced action of $H$ on the set of strata is given by the action of $H$ on $I^+.$\\
(iv) The strata associated with one-dimensional cones are smooth over $k$.\\
(v) The special fibre of $P$ is a reduced divisor with strict normal crossings on $P$. \label{Pexistencetheorem}
\end{theorem}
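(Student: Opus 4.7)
The plan is to assemble the theorem from the various pieces of \cite{K�} with the $H$-equivariance tracked throughout. Starting from the object $(G,\mathscr{L},\mathscr{M})$ of $\mathrm{DEG}^{\mathrm{split}}_{\mathrm{ample}}$ together with its degeneration data $(X,Y,\phi,a,b)$, K\"unnemann's construction first produces a relatively complete model $\widetilde{P}$ over $\Og_K$ by gluing the torus embeddings $T\hookrightarrow T_{\sigma_\alpha}$ along the Raynaud extension $\widetilde{G}\to E$ and the trivialization $\tau$; the cone decomposition being smooth, semistable and containing $\sigma_T$ guarantees that the resulting scheme is regular and that the open stratum corresponding to $\sigma_T$ is precisely $\widetilde{G}$. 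The $\kappa$-twisted polarization function $\phi$ provides an $H$-equivariant ample line bundle that algebraizes the formal construction, and hypothesis (d) ensures that the free $Y$-action on $\widetilde{P}$ is properly discontinuous with trivial stabilizers, so the quotient $P:=\widetilde{P}/Y$ exists as a projective flat $\Og_K$-scheme together with the line bundle $\mathscr{L}_P$ descended from $\widetilde{\mathscr{L}}_P$.

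For (i) I would invoke the generic fibre analysis in \cite{K�}: over $K$ the Raynaud uniformization identifies $P_\eta$ with $A$, and the locus of cones meeting the interior of $\sigma_T$ reconstructs the identity component $\mathscr{A}^0=G$; a direct comparison of universal properties, using the fact that $P^{\mathrm{sm}}$ is smooth, separated, of finite type with group law on the generic fibre, then yields $P^{\mathrm{sm}}\cong \mathscr{A}$. For (ii) the key point is that $\Gamma$-admissibility of $\{\sigma_\alpha\}$ means the $H$-action through the quotient $\Gamma\to H$ is compatible with every piece of data used in the construction (torus embeddings, degeneration data, and the cocycle for the $Y$-action), so the action of $H$ on $G$ propagates functorially to $\widetilde{P}$, descends to $P$, and the compatibility with $\mathscr{L}^{\otimes\kappa}$ on $G$ follows from the $\kappa$-twisted property of $\phi$; uniqueness is automatic because $P$ is separated and $A$ is dense.

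Properties (iii)--(v) are combinatorial consequences of the toric construction. On $\widetilde{P}$ the reduced special fibre is naturally stratified by cones in $I\setminus\{\{0\}\}$, and the quotient by $Y$ collapses this to a stratification indexed by $I^+$; $H$-equivariance of the cone decomposition translates directly into $H$-equivariance of the stratification with the prescribed action on the index set. One-dimensional cones correspond to irreducible components of the special fibre, and their smoothness (property (iv)) follows from semistability condition (b) combined with the smoothness of $\{\sigma_\alpha\}$, which forces the associated toric stratum to be smooth over $k$. Finally, (v) is the standard output of the semistable toric picture: \'etale-locally at a point lying on the stratum of a $d$-dimensional cone, the model looks like $\Og_K[t_1,\ldots,t_n]/(t_1\cdots t_d-\pi)$, and the intersections $D_J$ are smooth of codimension $\#J$ by smoothness of the cone decomposition.

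The principal obstacle is not any individual verification but the bookkeeping: K\"unnemann develops the construction in the absence of $H$, and one must check at each step (gluing, algebraization, descent along $Y$) that $H$-equivariance is preserved, which is exactly what the hypothesis of $\Gamma$-admissibility (rather than mere $Y$-admissibility) was designed to supply. The only genuinely delicate point is identifying $P^{\mathrm{sm}}$ with $\mathscr{A}$ rather than with some intermediate open subscheme; here one must combine the N\'eron mapping property with the fact that the strata of codimension $\geq 2$ in the special fibre lie in $P\setminus P^{\mathrm{sm}}$, which follows from the semistability condition together with the explicit description of the \'etale-local structure.
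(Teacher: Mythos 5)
Your overall strategy coincides with the paper's: the theorem is assembled from K\"unnemann's results (his Theorem 3.5 supplies the existence of $P$, its regularity, projectivity and flatness, the stratification, the extension of the $H$-action, and the strict normal crossings property; irreducibility comes from Faltings--Chai, Chapter III, Proposition 4.11), and the $\Gamma$-admissibility hypothesis performs exactly the equivariance bookkeeping you describe for the gluing, algebraization and descent along $Y$. Parts (ii), (iii) and (v), and the construction of $P$ as the algebraization of $(\tilde{G}\times^{T}Z)/Y$, are handled essentially as you say.

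The one genuine gap is in part (i). A ``comparison of universal properties'' produces, via the N\'eron mapping property, a morphism $P^{\mathrm{sm}}\to \mathscr{A}$ extending the identity on $A$, but nothing in your argument shows that this morphism is surjective: a priori $P^{\mathrm{sm}}$ might contain only $G=\mathscr{A}^0$ together with strata mapping to the identity component, in which case $P^{\mathrm{sm}}\to\mathscr{A}$ would miss the non-identity components of $\mathscr{A}_k$. Your closing remark that the strata of codimension $\geq 2$ lie outside $P^{\mathrm{sm}}$ only bounds $P^{\mathrm{sm}}$ from above, which is the wrong direction. The missing input is K\"unnemann's (4.4): the full N\'eron model $\mathscr{A}$, and not merely $\mathscr{A}^0$, embeds as an open subscheme of $P$. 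Concretely, the strata attached to the one-dimensional cones with primitive element $(\ell,1)$, taken modulo $Y$, are torsors under $\mathscr{A}^0_k$ indexed by the cokernel of $Y\to X^\vee$, which by Faltings--Chai, Chapter III, Corollary 8.2 is the component group of $\mathscr{A}_k$. Once the open immersion $\mathscr{A}\hookrightarrow P^{\mathrm{sm}}$ is available, the paper concludes by observing that both composites $\mathscr{A}\to P^{\mathrm{sm}}\to\mathscr{A}$ and $P^{\mathrm{sm}}\to\mathscr{A}\to P^{\mathrm{sm}}$ restrict to the identity on the generic fibre and hence are the identity. Incidentally, the torsor description is also the paper's (cleaner) route to (iv): these strata are torsors under the smooth group scheme $\mathscr{A}^0_k$, hence smooth over $k$; your appeal to smoothness of ``toric strata'' is imprecise, since the strata fibre over the abelian part $E_k$ and are not purely toric.
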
  
\begin{proof}
By \cite{Kü}, Theorem 3.5, there exists a scheme $P\to \Spec\Og_K$ (depending on the cone decomposition) which is regular as well as projective and flat over $\Og_K$, which contains $\mathscr{A}^0$ as an open subscheme, and which satisfies conditions (ii), (iii), and (v). The scheme $P$ is irreducible by \cite{FC}, Chapter III, Proposition 4.11. It follows in particular that $P$ is a model of $A$. By \cite{Kü}, (4.4), we know that $P$ contains the Néron model $\mathscr{A}$ of $A$ as an open subscheme. The open immersion $\mathscr{A}\to P$ must factor through $P^{\mathrm{sm}}$, and we have a canonical morphism $P^{\mathrm{sm}}\to \mathscr{A}$ from the universal property of the Néron model. Both compositions $\mathscr{A}\to P^{\mathrm{sm}}\to \mathscr{A}$ and $P^{\mathrm{sm}}\to \mathscr{A}\to P^{\mathrm{sm}}$ are equal to the identity because this holds generically. Hence part (i) follows. For part (iv), note that strata associated with one dimensional cones whose primitive element has the form $(\ell, 1)$ for some $\ell\in X^\vee$ are torsors for the group scheme $\mathscr{A}^0_k$. This follows from the construction of $P$ explained in \cite{Kü}. Since $\mathscr{A}^0_k$ is smooth over $k$, the claim follows.
\end{proof}\\
In general, we cannot expect a smooth rational polyhedral cone decomposition having properties (a),...,(d) to exist. We have, however, the following
\begin{proposition}
Let $(G, \mathscr{L}, \mathscr{M})\in\mathrm{DEG}^{\mathrm{split}}_{\mathrm{ample}}$, and assume that the finite group $H$ acts on this object (we assume in this Proposition that $H$ acts trivially on $\Og_K$). Let $(X,Y, \phi, a,b):=\mathrm{For}(F((G, \mathscr{L}, \mathscr{M}))).$ After replacing $K$ by a finite extension if necessary, there exists a smooth rational polyhedral cone decomposition $\{\sigma_{\alpha}\}_{\alpha\in I}$ which has the properties (a),..., (d) listed in Theorem \ref{Pexistencetheorem}. \label{semconeexistenceproposition}
\end{proposition}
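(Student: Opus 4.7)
The plan is to produce the required cone decomposition by successively refining a coarse $\Gamma$-admissible polarized rational polyhedral cone decomposition of $\mathscr{C}$, whose existence is guaranteed by the general results of \cite{K�}, Section 3. The key tool is the base change formula recalled just before the statement: passing from $K$ to a finite extension $F$ of ramification index $\nu$ leaves $X$, $Y$, $\phi$, and hence the ambient lattice $X^\vee\oplus\mathbf{Z}$ and the cone $\mathscr{C}$ unchanged, but multiplies the pairing $b$ and the function $a$ by $\nu$. This rescaling of the polarization data is what will allow us to take arbitrarily fine refinements while retaining the existence of a (suitably $\kappa$-twisted) polarization function.

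First I would choose any $\Gamma$-admissible rational polyhedral cone decomposition of $\mathscr{C}$ admitting a $\kappa$-twisted polarization function and containing the ray $\sigma_T=\{0\}\times\mathbf{R}_{\geq 0}$, thereby securing (a) and (c) at the coarsest level. Next I would perform a $\Gamma$-equivariant toric subdivision to achieve smoothness: the positive definiteness of $b(-,\phi(-))$ implies that the nontrivial translations $S_{(y,\mathrm{Id})}$ have no interior fixed point on $\mathscr{C}\setminus\sigma_T$, so $Y$ acts freely there, and any smooth subdivision of a fundamental domain for the $Y$-action can be propagated by $Y$-translation, after which $H$-equivariance is restored by intersecting with $H$-translates and further subdividing along the common refinement.

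After passing to a finite extension $F/K$ of sufficiently large ramification index $\nu$, a further refinement will impose both (b) and (d) simultaneously. Scaling $b$ by $\nu$ dilates the affine $Y$-action in the $X^\vee$-direction by the same factor, so once the cones in a sufficiently fine subdivision are small in comparison to $\nu$ property (d) holds automatically; the simultaneous scaling of the polarization function leaves enough slack to insert additional rays whose primitive generators take the form $(\ell,1)$, giving (b), while still admitting a $(\nu\kappa)$-twisted polarization function. The main obstacle is the bookkeeping: all four conditions must be enforced simultaneously while retaining $\Gamma$-equivariance, smoothness, and the polarization function throughout the iteration. This is handled by the inductive procedure just sketched, exploiting the freeness of the $Y$-action on $\mathscr{C}\setminus\sigma_T$ to extend local refinements $Y$-equivariantly, and the finiteness of $H$ to restore $H$-equivariance by intersecting with $H$-translates at each stage.
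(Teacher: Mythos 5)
Your overall architecture (coarse $\Gamma$-admissible polarized decomposition, equivariant refinement, base change of ramification index $\nu$ exploiting $\mathrm{For}(D\times_K\Spec F)=(X,Y,\phi,\nu a,\nu b)$) matches the shape of the paper's argument, but the paper simply cites K\"unnemann's Theorem 4.7, which already delivers a smooth $\Gamma$-admissible decomposition admitting a $\kappa$-twisted polarization function and \emph{semistable with respect to the rescaled lattice} $X^\vee\oplus(\nu\Z)$, obtained as a refinement of a decomposition having (c) and (d); properties (c) and (d) are then inherited for free (a refinement of a one-dimensional cone is trivial, and $\tau\subseteq\sigma$ implies $\tau\cap S_{(y,\mathrm{Id})}(\tau)\subseteq\sigma\cap S_{(y,\mathrm{Id})}(\sigma)$), and the base change enters only to transport semistability from the lattice $X^\vee\oplus\nu\Z$ to the standard one via the $\Gamma$-equivariant map $(l,s)\mapsto(l\circ h_X,\nu s)$. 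You are in effect trying to reprove Theorem 4.7 by hand, and the sketch fails at exactly the two points where that theorem does real work.

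First, condition (b) cannot be achieved the way you describe. A one-dimensional cone admits no nontrivial subdivision, so once a refinement contains a ray whose primitive generator is $(\ell,s)$ with $s>1$, no amount of ``inserting additional rays'' or further subdividing will remove it; ``slack'' in the polarization function is irrelevant here. The only mechanism that repairs this is changing the integral structure in the vertical direction, i.e.\ working with $X^\vee\oplus\nu\Z$ and then passing to an extension of ramification index exactly $\nu$ so that the rescaled lattice becomes the standard one. Your proposal invokes the base change but attributes its effect to the dilation of the $Y$-action (for (d)) and to the polarization function (for (b)); neither is the actual reason the extension is needed, and for (d) the base change is not needed at all since (d) passes to refinements. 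Second, you assert that a smooth $\Gamma$-equivariant subdivision of a fundamental domain, propagated by $Y$ and symmetrized under $H$ by common refinement, still admits a $\Gamma$-admissible $\kappa$-twisted polarization function. This is not automatic: an arbitrary refinement of a projective (polarized) fan need not be projective, and the common refinement of two polarized decompositions need not carry a strictly convex support function. Preserving (a) through the subdivision process is precisely the delicate combinatorial content of K\"unnemann's construction, and your plan gives no argument for it.
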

\begin{proof}
It follows from \cite{Kü}, Theorem 4.7 that there is a smooth rational polyhedral cone decomposition $\{\sigma_{\alpha}\}_{\alpha\in I}$ which is $\Gamma$-admissible, admits a $\Gamma$-admissible $\kappa$-twisted polarization function (for some $\kappa\in \N$), and is semistable with respect to the integral structure given by $X^\vee\oplus (\nu\Z)$ for some positive integer $\nu.$ Furthermore, we know that this cone decomposition is constructed as a subdivision of one that has properties (c) and (d), which implies that $\{\sigma_{\alpha}\}_{\alpha \in I}$ will have those properties as well. Now choose any finite extension $L$ of $K$ with ramification index equal to $\nu.$ Letting $(X',Y', \phi', a', b'):=\mathrm{For}(F((G, \mathscr{L}, \mathscr{M})\times_K\Spec L)),$ we see that the map
\begin{align*}
(X')^\vee\oplus\Z&\to X^\vee\oplus\Z\\
(l,s)&\mapsto (l\circ h_X, \nu\cdot s)
\end{align*}
is $\Gamma$-equivariant, where $h_X\colon X\to X'$ is the canonical isomorphism (via the canonical isomorphism $Y'\to Y$ we can identify $Y'\rtimes H$ with $Y\rtimes H$, and we refer to both of them as $\Gamma$). Hence we obtain our desired cone decomposition by transport of structure. 
\end{proof}\\
Informally speaking, K\"unnemann's construction proceeds as follows: Given a $\Gamma$-admissible cone decomposition $\{\sigma_{\alpha}\}_{\alpha\in I}$ of $\mathscr{C}$ which admits a $1$-twisted $\Gamma$-admissible polarization function, we construct a scheme $Z=Z(\{\sigma_{\alpha}\}_{\alpha\in I})$, which is regular and locally of finite type over $\Og_K$, on which $T$ acts, and which contains $T$ as an open orbit (the action of $T$ on $Z$ extends the action of $T$ on itself by translation). This similar to the construction of a toric variety from a fan, and the existence of a cone decomposition satisfying our requirements follows after replacing $(G, \mathscr{L}, \mathscr{M})$ by $(G, \mathscr{L}^{\otimes n}, \mathscr{M}^{\otimes n})$ for $n$ sufficiently large. Since there is a $\Gamma$-admissible polarization function, we obtain a $T$-linearized ample line bundle $\mathscr{N}$ on $Z$. We let $\tilde{P}$ be the contracted product $\tilde{G}\times^{T}Z,$ where $\tilde{G}$ comes from the Raynaud extension associated with $A$ ($\tilde{P}$ will be a \it relatively complete model \rm for the object $F((G, \mathscr{L}^{\otimes n}, \mathscr{M}^{\otimes n}))$; see \cite{Kü}, Definition 2.12, or \cite{FC}, Chapter III Definition 3.1). Note that $\tilde{G}$ is naturally a $T$-torsor over $E$. There is an induced morphism $\tilde{\pi}\colon \tilde{P}\to E$ which is locally of finite type, and $\tilde{P}$ contains $\tilde{G}.$ We let $\tilde{\mathscr{L}}_{\tilde{P}}:=\tilde{\mathscr{L}}\times^T\mathscr{N}.$ One now checks that the action of $\Gamma$ on $\tilde{G}_\eta$ extends to $\tilde{P}$ and $\tilde{\mathscr{L}}_{\tilde{P}}$, for which we use the definition of the action of $Y$ on $X^\vee\oplus\Z$ (see \cite{Kü}, proof of Lemma 3.7). Intuitively, we want to form the quotient $\tilde{P}/Y$ (where $Y$ acts via $Y\to \Gamma$) to obtain $P$. This is possible in the world of formal schemes: For each $n$, the quotient $(\tilde{P}\times_{\Og_K}\Spec\Og_K/\mathfrak{m}^{n+1})/Y$ exists, is of finite type over $\Spec\Og_K/\mathfrak{m}^{n+1}$, and carries a natural ample line bundle. These schemes define a formal scheme over $\mathrm{Spf}\,\Og_K$, which algebraizes uniquely. This algebraization turns out to be a model of $A$ with all the desired properties. Note in particular that we have an action of $H$ on $P$. \\

\noindent$\mathbf{Remark.}$ From now on, until the last Chapter, we shall assume that the finite group $H$ which acts on $G$ is equal to $\{\mathrm{Id}, [-1]\}$ and that $H$ acts trivially on $\Og_K.$ 
\subsection{Kulikov models}
In this subsection, let $X$ be a smooth, projective, and geometrically integral algebraic surface over $K$ such that $\omega_{X/K}\cong \Og_X.$ This is the case if and only if $X$ is an Abelian surface or a K3 surface.
\begin{definition} Let $X$ be a geometrically integral smooth projective algebraic surface over $K$ with trivial canonical bundle. A \rm Kulikov model \it of $X$ is a regular algebraic space $\mathscr{X}$ which is proper and flat over $\Og_K$ with the following properties: \\
(i) The algebraic space $\mathscr{X}$ is a model of $X$.\\ 
(ii) The reduced special fibre $(\mathscr{X}_k)_{\mathrm{red}}$ of $\mathscr{X}$ is a divisor with normal crossings on $\mathscr{X}.$\\
(iii) We have $$\omega_{\mathscr{X}/\Og_K}((\mathscr{X}_k)_{\mathrm{red}})\cong \Og_{\mathscr{X}}.$$\label{Kulikovdefinition}\\
We say that $\mathscr{X}$ is a \rm strict Kulikov model \it (called a \rm minimal model \it in \cite{CL}) of $X$ if in addition to $(i), (ii), (iii)$ above, the following conditions are satisfied:\\
(i') The special fibre of $\mathscr{X}$ is a scheme, \\
(ii') The special fibre of $\mathscr{X}$ is reduced and its irreducible components are smooth over $k.$\\
In this case, $\omega_{\mathscr{X}/\Og_K}$ is trivial. 
\end{definition}
The possible special fibres of strict Kulikov models of $K3$ surfaces can be classified as follows:
\begin{proposition}
Let $\mathscr{X}\to \Spec\Og_K$ be a strict Kulikov model of the K3-surface $X$ over $K$. Then the special fibre $\mathscr{X}_k$ (which is a scheme by assumption) belongs to one of the following types:\\
Type I: The scheme $\mathscr{X}_k$ is a smooth K3-surface over $k$.\\
Type II: We have $$\mathscr{X}_k=Y_1\cup...\cup Y_N$$ (where $N\in \N$), such that $Y_1$ and $Y_N$ are rational surfaces and $Y_2,..., Y_{N-1}$ are elliptic ruled surfaces, and where all double curves are rulings. The dual complex of $\mathscr{X}_k$ is a chain with endpoints $Y_1$ and $Y_N$\\
Type III: We have $$\mathscr{X}_k=Y_1\cup...\cup Y_N,$$ such that all $Y_j$ are rational surfaces whose intersections form a chain of rational curves, and such that the dual complex of $\mathscr{X}_k$ is a triangulation of the 2-sphere. 
\end{proposition}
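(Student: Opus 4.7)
The plan is to combine the adjunction formula with the global Kulikov condition $\omega_{\mathscr{X}/\Og_K}((\mathscr{X}_k)_{\mathrm{red}})\cong \Og_{\mathscr{X}}$ and then invoke the Enriques--Kodaira classification of smooth projective surfaces to enumerate the possibilities for each component. First, for each irreducible component $Y_i$ of $\mathscr{X}_k=\sum_i Y_i$, setting $D_i:=\sum_{j\neq i} Y_i\cap Y_j$, adjunction together with the Kulikov condition yields
$$\omega_{Y_i}(D_i)\cong \Og_{Y_i},$$
so $D_i$ is anticanonical on $Y_i$. If the special fibre is irreducible, then $D_1=0$ and $\omega_{Y_1}\cong \Og_{Y_1}$; using upper semicontinuity of the Hodge numbers $h^{0,1}$ and $h^{0,2}$ under the flat proper family $\mathscr{X}\to \Spec\Og_K$ (via the complex $R\Gamma(\mathscr{X},\Og_{\mathscr{X}})$) together with the fact that the generic fibre is a K3 surface, I would force $Y_1$ to be a smooth projective K3 surface, yielding Type I.

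In the reducible case, I would analyse each component $Y_i$ separately. Because $-K_{Y_i}$ is effective (represented by the reduced snc divisor $D_i$) and nontrivial, the Enriques--Kodaira classification, combined with the requirement that $D_i$ be a simple normal crossings curve whose dual graph comes from a degeneration of a K3, leaves only two possibilities: either $Y_i$ is rational, or $Y_i$ is minimal ruled over a smooth elliptic curve. In the rational case $D_i$ is either a smooth elliptic curve or a cycle of smooth rational curves; in the elliptic ruled case, an argument using the Leray spectral sequence for the ruling $Y_i\to E$ together with $\omega_{Y_i}(D_i)\cong \Og_{Y_i}$ shows that $D_i$ must be a disjoint union of two sections of the ruling.

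Next, I would read off the local combinatorics of the dual complex $\Delta(\mathscr{X}_k)$: the link of a vertex $Y_i$ is the dual graph of the corresponding $D_i$. Elliptic ruled components thus contribute vertices whose link is two disjoint points, forcing such components to sit at interior vertices of a linear chain. Hence if any component is elliptic ruled, we are in Type II, with rational components sitting at the two ends. If on the other hand every component is rational, then every $D_i$ is a cycle of rational curves, which translates to $\Delta(\mathscr{X}_k)$ being a closed triangulated topological $2$-manifold.

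To finish the classification I would identify the homeomorphism type of this $2$-manifold with $S^2$. The main obstacle is to exclude higher-genus closed surfaces; I expect this to follow from the weight spectral sequence for nearby cycles (the $\ell$-adic analogue of the Clemens--Schmid exact sequence), which computes the graded pieces of the monodromy weight filtration on $H^2_{\et}(X_{\overline{K}},\Q_\ell)$ from the combinatorics of $\mathscr{X}_k$. Combining this with the K3 Betti numbers $b_0=b_4=1$, $b_1=b_3=0$, $b_2=22$ forces $\chi(\Delta(\mathscr{X}_k))=2$ in the Type III case, so the underlying closed $2$-manifold must be $S^2$. This last step---extracting the topology of the dual complex from the weight filtration on nearby cycles---is where the K3 hypothesis on the generic fibre enters essentially and is, in my view, the most substantial part of the proof.
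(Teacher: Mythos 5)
The paper does not actually prove this proposition: its ``proof'' is a citation of \cite{CL}, Corollary 6.3 and Definition 5.4, where the classification is established (following the classical arguments of Kulikov, Persson--Pinkham and Friedman--Morrison, transported to the algebraic setting with the $\ell$-adic weight spectral sequence replacing Clemens--Schmid). Your sketch is, in essence, an outline of the strategy of that cited proof rather than a genuinely different route, so the comparison is really between your summary and the reference. As a summary it is accurate in its main lines: adjunction plus triviality of $\omega_{\mathscr{X}/\Og_K}$ makes each double curve $D_i$ anticanonical on $Y_i$, the Enriques--Kodaira (in positive characteristic, Bombieri--Mumford) classification restricts the components to rational or elliptic ruled surfaces, the links of vertices in the dual complex are the dual graphs of the $D_i$, and the weight spectral sequence pins down the topology of the dual complex in Type III. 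Two points are stated a little loosely. First, in the irreducible case, upper semicontinuity of $h^{0,1}$ and $h^{0,2}$ only bounds the special fibre's Hodge numbers from below, which does not by itself exclude an abelian surface; what you actually need is the constancy of $\chi(\Og)$ along the flat proper family (which your parenthetical appeal to $R\Gamma(\mathscr{X},\Og_{\mathscr{X}})$ being perfect does supply), or alternatively the vanishing of $b_1$ of the special fibre via the weight spectral sequence. Second, in Type III it is cleaner to extract $H^1(\Delta,\Q)=0$ and $H^2(\Delta,\Q)\neq 0$ from the weight filtration (using $b_1(X)=0$ and the nontriviality of $\mathrm{Gr}^W_0H^2$) than to compute $\chi(\Delta)$ directly; either way one still needs the preliminary combinatorial fact that $\Delta$ is a closed connected surface, which comes from each $D_i$ being a cycle of rational curves and each double curve carrying exactly two triple points. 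None of this is a gap in the mathematics --- it is all carried out in \cite{CL} --- but if you intended your outline as a self-contained proof, these are the places where the substantial work is hiding.
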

\begin{proof}
See \cite{CL}, Corollary 6.3 and Definition 5.4. 
\end{proof}\\
Let $\rho\colon \Gal(\overline{K}/K)\to \Aut_{\Q_{\ell}} (V)$ be a finite-dimensional continuous $\Q_\ell$-adic representation of $\Gal(\overline{K}/K).$ Suppose that $\rho$ is \it unipotent, \rm i.e., that the operator on $V$ induced by any $\sigma\in \Gal(\overline{K}/K)$ has characteristic polynomial $(x-1)^{\dim V}.$ If $P\subseteq \Gal(\overline{K}/K)$ denotes the wild inertia subgroup, it follows that the Galois representation on $V$ factors through $\Gal(\overline{K}/K)/P$ since the operator induced by any $g\in P$ must both have finite order and be unipotent, so it must be trivial. In particular, the monodromy group $\im \rho$ is pro-cyclic. As usual, we define the \it monodromy operator \rm on $V$ to be
$$N:=\log \sigma=\sum_{m=1}^\infty \frac{(-1)^{m+1}}{m}(\sigma-1)^m,$$ where $\sigma$ is a topological generator of $\im \rho.$ Since $\sigma-1$ is nilpotent, so is $N$. In general, if $N$ is a nilpotent operator on a vector space $V,$ we define the \it nilpotency index \rm of $N$ to be the natural number $m$ such that $N^m=0$ but $N^{m-1}\not=0.$ We follow the convention which puts $N^0=\mathrm{Id}.$ 
\begin{proposition}
Let $X$ be a K3 surface over $K$ and assume that $X$ admits a strict Kulikov model $\mathscr{X}\to \Spec\Og_K$. Then the special fibre of $\mathscr{X}_k$ is of type I (resp. type II, type III) if and only if the nilpotency index of the monodromy operator $N_X$ on $H^2_{\et}(X_{\overline{K}}, \Q_\ell)$ is equal to 1 (resp. 2,3). 
\end{proposition}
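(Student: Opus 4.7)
The strategy is to apply the Rapoport--Zink weight spectral sequence to the strictly semistable scheme $\mathscr{X}\to \Spec\Og_K$. Its $E_1$-page is a direct sum of twisted $\ell$-adic cohomology groups of the strata $Y^{(r)}=\bigsqcup_{|I|=r}\bigcap_{i\in I}Y_i$ (where $Y_1,\ldots,Y_N$ are the irreducible components of $\mathscr{X}_k$), it degenerates at $E_2$, converges to $H^{\ast}_{\et}(X_{\overline K},\Q_\ell)$ with abutment filtration equal to the weight filtration, and the monodromy operator $N$ is realised as a shift on the $E_1$-page that identifies common summands up to a Tate twist. For K3 surfaces admitting a strict Kulikov model, the weight--monodromy conjecture holds on $H^2$ (this follows from Steenbrink's theory in equal characteristic zero, and more generally from an explicit spectral sequence computation for the Kulikov models at hand); consequently $N^k$ induces an isomorphism $\mathrm{gr}^W_{2+k}H^2\xrightarrow{\sim}\mathrm{gr}^W_{2-k}H^2(-k)$, and the nilpotency index of $N$ equals $1+\max\{k\ge 0 : \mathrm{gr}^W_{2+k}H^2\ne 0\}$.

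I now distinguish the three types. In Type I, $\mathscr{X}_k$ is a smooth K3 surface, so $H^2$ is pure of weight $2$ and $N=0$; the nilpotency index is $1$. In Type II, the dual complex is a $1$-dimensional chain, hence $Y^{(3)}=\emptyset$, so $\mathrm{gr}^W_4 H^2=0$ and $N^2=0$. On the other hand, the Persson--Pinkham classification forces the double locus $Y^{(2)}$ to contain an elliptic curve (as a section of the ruling of one of the middle elliptic ruled components), so $H^1(Y^{(2)})\ne 0$ contributes nontrivially to $\mathrm{gr}^W_3 H^2$, giving $N\ne 0$ and nilpotency index $2$. In Type III, the dual complex triangulates $S^2$, so $Y^{(3)}\ne \emptyset$; a direct computation with the $E_1$-differentials identifies $\mathrm{gr}^W_4 H^2$ with the top combinatorial cohomology of the dual complex, namely $H^2(S^2,\Q_\ell)\cong \Q_\ell$. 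Hence $N^2\ne 0$ and the nilpotency index is $3$. As the three types are mutually exclusive and exhaustive, the stated equivalences follow.

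The main obstacle lies in the non-vanishing arguments in Types II and III, i.e.\ in verifying that the relevant $E_2$-subquotients of the weight spectral sequence do not collapse. In Type III this is a purely combinatorial calculation with the dual complex $|\Delta|\cong S^2$; in Type II it requires identifying an elliptic double curve, for which one combines the description of the middle components as elliptic ruled surfaces with the triviality of $\omega_{\mathscr{X}/\Og_K}$ (which forces the double locus on each middle component to be a pair of sections of the ruling, hence elliptic). The weight--monodromy input can alternatively be bypassed by reading $N$ off directly from the $E_1$-page description in \cite{K�}-style coordinates, avoiding any external appeal to Hodge theory.
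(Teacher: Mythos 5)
The paper offers no argument of its own here: the proof is the single line ``This is \cite{CL}, Theorem 6.4.'' Your plan is, in substance, the proof of that cited theorem, so you are reconstructing the standard argument rather than diverging from the paper. The skeleton is sound: the Rapoport--Zink weight spectral sequence, the identity ``nilpotency index $=1+\max\{k:\mathrm{gr}^W_{2+k}H^2\neq 0\}$'' granted weight--monodromy (which in relative dimension $\le 2$ and mixed characteristic is a theorem of Rapoport--Zink, not only of Steenbrink in equal characteristic zero --- worth citing correctly since the paper allows $p>2$), and the identification $\mathrm{gr}^W_4H^2\cong H^2(|\Delta|,\Q_\ell)(-2)$ which settles Type III via $|\Delta|\cong S^2$. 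The one place you flag but do not close --- non-collapse of $\mathrm{gr}^W_3H^2$ in Type II --- deserves to be closed, and it can be done by pure dimension count rather than by exhibiting a specific elliptic curve: writing $Y^{(2)}=\coprod_{i=1}^{N-1}C_i$ with each $C_i$ elliptic and $Y^{(1)}=\coprod_{i=1}^{N}Y_i$ with $Y_1,Y_N$ rational and the middle components elliptic ruled, the Gysin map $H^1(Y^{(2)})(-1)\to H^3(Y^{(1)})$ goes from a $2(N-1)$-dimensional space to a $2(N-2)$-dimensional one, hence has nonzero kernel, i.e.\ $E_2^{-1,3}\neq 0$; this also covers the case $N=2$, which your parenthetical about ``middle components'' does not. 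Two further small points: the double curves in Type II are \emph{sections} of the rulings (elliptic), despite the paper's statement of the classification saying ``rulings''; and since the model is only an algebraic space with scheme special fibre, the existence of the weight spectral sequence should be justified as the paper does elsewhere (via \cite{Ma2}, Proposition 2.3) rather than quoted for schemes. With those repairs your argument is complete and correct.
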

\begin{proof}
This is \cite{CL}, Theorem 6.4.
\end{proof}\\
There is an analogous classification of the possible special fibres of strict Kulikov models of Abelian surfaces, and also an analogue of the previous Proposition for Abelian surfaces; see \cite{CL}, Definition 5.6, Corollary 8.2, and Theorem 8.3 for more details. In this paper, the following criterion will play an important role:
\begin{proposition}
Let $X$ be a geometrically integral smooth projective algebraic surface over $K$ with trivial canonical bundle. Let $\mathscr{X}\to \Spec\Og_K$ be a flat projective model of $X,$ \label{codim2proposition} where $\mathscr{X}$ is a regular scheme. Suppose further that $\mathscr{X}$ has strictly semistable reduction, and that there is an open subscheme $U$ of $P$  which is smooth over $\Og_K,$ whose complement has codimension $\geq 2$ in $P,$ and which admits a no-where vanishing global 2-form. Then $\mathscr{X}$ is a strict Kulikov model of $X$.
\end{proposition}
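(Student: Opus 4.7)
The plan is to reduce condition (iii) of Definition \ref{Kulikovdefinition} to the triviality of $\omega_{\mathscr{X}/\Og_K}$ itself, and then produce an explicit trivialization by extending the hypothesized two-form across the codimension-$\geq 2$ complement of $U$.

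First, because $\mathscr{X}$ is strictly semistable, the special fibre $\mathscr{X}_k$ is already reduced, so $(\mathscr{X}_k)_{\mathrm{red}}=\mathscr{X}_k=\mathrm{div}(\pi)$ for any uniformizer $\pi$ of $\Og_K$. Multiplication by $\pi$ gives a canonical isomorphism $\Og_{\mathscr{X}}\xrightarrow{\sim}\Og_{\mathscr{X}}((\mathscr{X}_k)_{\mathrm{red}})$, so condition (iii) is equivalent to $\omega_{\mathscr{X}/\Og_K}\cong \Og_{\mathscr{X}}$. The morphism $\mathscr{X}\to\Spec\Og_K$ is a local complete intersection (\'etale-locally, $\mathscr{X}$ looks like $\Spec\Og_K[x_1,x_2,x_3]/(x_1\cdots x_k-\pi)$ for some $k\le 3$), so $\omega_{\mathscr{X}/\Og_K}$ is an invertible $\Og_{\mathscr{X}}$-module whose restriction to the smooth open $U$ coincides with $\Omega^2_{U/\Og_K}$. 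Consequently the hypothesized nowhere-vanishing two-form $\eta$ is a nowhere-vanishing global section of $\omega_{\mathscr{X}/\Og_K}|_U$.

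Second, since $\mathscr{X}$ is regular (hence normal), every invertible sheaf on $\mathscr{X}$ is reflexive, and sections of reflexive sheaves extend uniquely across closed subsets of codimension at least two. Applying this to $\eta$ produces a global section $\tilde\eta\in\Gamma(\mathscr{X},\omega_{\mathscr{X}/\Og_K})$. The vanishing locus $Z(\tilde\eta)$ of a nonzero section of a line bundle is of pure codimension one, but by construction $Z(\tilde\eta)\subseteq \mathscr{X}\setminus U$, which has codimension $\ge 2$ by hypothesis. Hence $Z(\tilde\eta)=\emptyset$ and $\tilde\eta$ trivializes $\omega_{\mathscr{X}/\Og_K}$, establishing (iii).

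The remaining clauses of the strict Kulikov definition are routine: (i) is part of the hypothesis; (ii) is weaker than the SNC condition built into strict semistability; (i') is automatic because $\mathscr{X}$ is a scheme; and (ii') follows from the SNC condition together with perfectness of $k$ (regular $k$-schemes of finite type over a perfect field are smooth). The only genuine content is the Hartogs-type extension in the previous paragraph, and even this is essentially forced by the codimension hypothesis; I do not expect a serious obstacle beyond verifying that no codimension-one vanishing locus of $\tilde\eta$ can hide inside $\mathscr{X}\setminus U$, which is ruled out precisely by the codimension-two assumption.
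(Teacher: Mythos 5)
Your argument is correct and follows essentially the same route as the paper: reduce to the triviality of $\omega_{\mathscr{X}/\Og_K}$, use the l.c.i.\ property to see it is a line bundle agreeing with $\bigwedge^2\Omega^1_{U/\Og_K}$ on $U$, and then extend the trivialization across the codimension-$\geq 2$ complement via reflexivity of line bundles on the regular scheme $\mathscr{X}$. The only cosmetic difference is that you extend the section and argue its zero divisor must be empty, whereas the paper extends the isomorphism via $j_\ast$ and cites Hartshorne's result on reflexive sheaves; you also make explicit the (correct) reduction of condition (iii) to $\omega_{\mathscr{X}/\Og_K}\cong\Og_{\mathscr{X}}$ using reducedness of the special fibre, which the paper leaves implicit.
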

\begin{proof}
First note that properties (i) and (ii), as well as (i') and (ii'), of Definition \ref{Kulikovdefinition} are satisfied by assumption, so all we need to show is that $\omega_{\mathscr{X}/\Og_K}\cong \Og_\mathscr{X}.$ It follows also from our assumptions that the morphism $\mathscr{X}\to \Spec \Og_K$ is an l.c.i. morphism, which implies that $\omega_{\mathscr{X}/\Og_K}$ is indeed a line bundle, rather than a complex. The restriction of $\omega_{\mathscr{X}/\Og_K}$ to $U$ is isomorphic to $\bigwedge^2\Omega^1_{U/\Og_K}$ because $U$ is smooth over $\Og_K,$ and by assumption there is a no-where vanishing global section of $\bigwedge^2\Omega^1_{U/\Og_K}$, so $\bigwedge^2\Omega^1_{U/\Og_K}\cong \Og_U.$ Now let $j\colon U\to \mathscr{X}$ denote the open immersion. Because the complement of $U$ in $\mathscr{X}$ has codimension $\geq 2$, it follows that
$$\omega_{\mathscr{X}/\Og_K}\cong j_\ast \omega_{U/\Og_K}\cong j_\ast\Og_U\cong \Og_{\mathscr{X}}$$ using \cite{Ha}, Proposition 1.6 together with the observation that line bundles are always reflexive sheaves.
\end{proof}\\
This criterion can, for example, be used to deduce
\begin{corollary}\rm (Compare \cite{HN3}, Theorem 5.1.6) \it \label{Pkulikovcorollary}
Let $A$ be the Abelian surface we introduced at the beginning, and let $(G, \mathscr{L}, \mathscr{M})\in \mathrm{DEG}^{\mathrm{split}}_{\mathrm{ample}}$ with $G=\mathscr{A}^0$. Suppose further that $(X,Y,\phi, a,b)=\mathrm{For}(F((G,\mathscr{L}, \mathscr{M})))$ and that we have a smooth $\Gamma$-admissible rational polyhedral cone decomposition of $\mathscr{C}\subseteq X^\vee_{\mathbf{R}}\oplus\mathbf{R}$ which satisfies the conditions (a),..., (d) from Theorem \ref{Pexistencetheorem}. Then the model $P$ of $A$ constructed in that Theorem is a strict Kulikov model of $A$. In particular, Abelian surfaces potentially admit strict Kulikov models which are schemes.
\end{corollary}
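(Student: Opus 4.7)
The plan is to verify the hypotheses of Proposition \ref{codim2proposition} for $\mathscr{X}:=P$. By Theorem \ref{Pexistencetheorem} the scheme $P$ is regular, projective, and flat over $\Og_K$, it is a model of $A$, and its special fibre is a reduced divisor with strict normal crossings, so $P$ has strictly semistable reduction in the sense required. Since $A$ is an Abelian surface, $\omega_{A/K}$ is trivial, so the other hypothesis on $X$ in Proposition \ref{codim2proposition} is also satisfied.

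The open subscheme needed by the criterion will be $U:=P^{\mathrm{sm}}$. By Theorem \ref{Pexistencetheorem}(i), the canonical morphism $P^{\mathrm{sm}}\to \mathscr{A}$ is an isomorphism, so $U$ is smooth over $\Og_K$. I claim that $P\setminus U$ has codimension at least $2$ in $P$. Indeed, because $P$ is regular and its special fibre is a strict normal crossings divisor, $P$ is étale-locally of the form $\Og_K[x_1,\ldots,x_n]/(x_1\cdots x_k-\pi)$ with $\pi$ a uniformizer, and the locus where $P\to \Spec\Og_K$ fails to be smooth is exactly the intersection locus of the components of the special fibre, which has codimension $1$ inside the special fibre and therefore codimension $2$ in $P$. (Equivalently, parts (iii) and (iv) of Theorem \ref{Pexistencetheorem} show that the open stratum, a union of smooth strata associated with one-dimensional cones, has complement of codimension at least $2$ in $P$.)

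It remains to exhibit a nowhere vanishing global $2$-form on $U=\mathscr{A}$. The Néron model $\mathscr{A}$ is a smooth separated group scheme of relative dimension $2$ over $\Og_K$, so $\Omega^1_{\mathscr{A}/\Og_K}$ is a rank $2$ locally free sheaf. Choosing a basis of the cotangent space at the identity section and translating by the group law produces a translation-invariant global section of $\bigwedge^2\Omega^1_{\mathscr{A}/\Og_K}$ which is nowhere vanishing, so $\bigwedge^2\Omega^1_{U/\Og_K}\cong \Og_U$. Proposition \ref{codim2proposition} then applies to conclude that $P$ is a strict Kulikov model of $A$. The final assertion follows by combining this with Proposition \ref{semconeexistenceproposition}, which guarantees that, after a finite extension of $K$, a cone decomposition satisfying the hypotheses (a)--(d) of Theorem \ref{Pexistencetheorem} exists.

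I do not expect a serious obstacle; the only mildly technical point is the codimension statement for the non-smooth locus, and this is immediate from the strict semistability of $P$. Everything else is either built into Theorem \ref{Pexistencetheorem} or is the standard triviality of the canonical bundle on a smooth group scheme.
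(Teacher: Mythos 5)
Your proposal is correct and follows essentially the same route as the paper: apply Proposition \ref{codim2proposition} with $U=P^{\mathrm{sm}}\cong\mathscr{A}$, note that reducedness and strict normal crossings of the special fibre force the complement of $P^{\mathrm{sm}}$ to have codimension $\geq 2$, produce an invariant nowhere vanishing $2$-form on the N\'eron model (the paper cites \cite{BLR}, Chapter 4.2, Corollary 3 for exactly the translation argument you spell out), and invoke Proposition \ref{semconeexistenceproposition} for the potential existence statement.
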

\begin{proof}
Because the special fibre of $P$ is a reduced divisor with strict normal crossings, we know that the complement of the open subscheme $P^{\mathrm{sm}}$ in $P$ has codimension $\geq 2$. By Theorem \ref{Pexistencetheorem} (i), we know that $P^{\mathrm{sm}}$ is isomorphic to the Néron model $\mathscr{A}$ of $A$. By \cite{BLR}, Chapter 4.2, Corollary 3, we know that there exists a global no-where vanishing 2-form on $\mathscr{A}.$ Hence the claim follows from Proposition \ref{codim2proposition}. The second claim follows because after a finite extension, a rational polyhedral cone decomposition with the properties required for the first part of this Corollary can always be constructed by Proposition \ref{semconeexistenceproposition}.
\end{proof}\\
Suppose $A$ is an (arbitrary) Abelian variety over $K$. Let $\mathscr{A}\to \Spec \Og_K$ denote its Néron model, and let $\mathscr{A}^0$ be the identity component of the Néron model. Then there exist nonnegative integers $r_1, r_2$ and an exact sequence 
$$0\to U\times_k \mathbf{G}_{\mathrm{m}}^{r_2}\to \mathscr{A}^0_k\to B\to 0,$$ where $U$ us a unipotent algebraic group of dimension $r_1,$ and $B$ is an Abelian variety over $k.$ At this point we use that $k$ is algebraically closed, and hence perfect. We say that $A$ has \it semiabelian reduction \rm if $r_1=0,$ and we call $r_2$ the \it toric rank \rm of $A,$ which will sometimes also be denoted by $t$ or $t(A).$ 
\section{Models of Kummer surfaces}
As before, let $A$ be an Abelian surface over $K$, and let $X$ be the Kummer surface associated with $A$. In this section, we shall prove that, after replacing $K$ by one of its finite extensions if necessary, $X$ admits a strict Kulikov model $\mathscr{X}\to \Og_K$. For this purpose we may assume without loss of generality that $A$ has semiabelian reduction over $K$ and that the $K$-group scheme $A[2]$ is constant. Let $(G, \mathscr{L}, \mathscr{M})\in \mathrm{DEG}^{\mathrm{split}}_{\mathrm{ample}}$ with $G=\mathscr{A}^0$ and suppose that the finite group $H:=\{\mathrm{Id}, [-1]\}$ acts on $(G, \mathscr{L}, \mathscr{M})$ in such a way that the action of $[-1]$ on $G$ is multiplication by $-1.$
Letting $(X,Y,\phi, a,b)=\mathrm{For}(F((G, \mathscr{L}, \mathscr{M}))),$ we obtain an action of $H$ on $(X,Y,\phi,a,b)$. Let $\Gamma:=Y\rtimes H.$ By Proposition \ref{semconeexistenceproposition}, we may also assume without loss of generality that there exists a smooth $\Gamma$-admissible rational polyhedral cone decomposition of $\mathscr{C}\subseteq X^\vee_{\mathbf{R}}\oplus \mathbf{R}$ which has properties (a),...,(d) from Theorem \ref{Pexistencetheorem}. Then that Theorem provides us with a regular model $P$ of $A$ over $\Og_K$ which is projective and flat over $\Og_K$ and which has the property that the action of $H$ on $A$ extends uniquely to $P$. We shall now study the fixed locus of this action. We already know that $\mathscr{A}\subseteq P$, and because $A[2]$ is constant, it extends to the closed subscheme $\mathscr{A}[2]$ of $P$. The main technical result of this section will be the observation that the fixed locus of the action of $H$ on $P$ is, in fact, equal to $\mathscr{A}[2].$ This will allow us to perform the Kummer construction on $P$, and we shall see that the resulting scheme $\mathscr{X}$ is a strict Kulikov model of $X$. As indicated introduction, the functor $F$ associates to $(G, \mathscr{L}, \mathscr{M})$ a tuple
$(E, X, Y, \phi, c, c^t, \tilde{G}, \iota, \tau, \tilde{\mathscr{L}}, \mathscr{M}, \lambda_E, \psi, a, b)\in \mathrm{DD}^{\mathrm{split}}_{\mathrm{ample}},$ on which the finite group $H$ acts. We shall have to look at some of these objects more closely; all the details can be found in \cite{Kü}, (2.2)-(2.8). In this tuple, $E$ is an Abelian scheme (of relative dimension 0,1, or 2) over $\Og_K.$ Let $\mathscr{P}_E$ be the rigidified Poincaré-bundle on $E\times_{\Og_K} E^\vee$, where $E^\vee$ denotes the dual Abelian variety. Then $c$ and $c^t$ denote homomorphisms
$c\colon X\to E^\vee(\Og_K)$ and $c^t\colon Y\to E(\Og_K)$ which encode the Raynaud extension (\ref{Raynaudext}) and its dual. Note that $\mathscr{P}_E$ comes with a natural structure of a $\Gm$-biextension of $E\times_{\Og_K} E^\vee$. We can view both $X\times Y$ and $Y$ as group schemes over $K$, which we shall denote by $(Y\times X)_\eta$ and $Y_\eta$, respectively. Then
$$\tau\colon \boldsymbol{1}_{(Y\times X)_\eta}\overset{\cong}{\to}(c\times c^t)^\ast \mathscr{P}_{E, \eta}^{-1}$$ is a trivialization (where $\boldsymbol{1}_{(Y\times X)_\eta}$ stands for the trivial $\Gm$-biextension of $(Y\times X)_\eta).$ Such a trivialization determines (and is determined by) an embedding
$\iota\colon Y\to \tilde{G}(K)$ such that $\pi\circ \iota=c^t$ (see \cite{Kü}, p. 173, (10)). Furthermore, $$\psi\colon \boldsymbol{1}_{Y_\eta}\overset{\cong}{\to} \iota^\ast \tilde{\mathscr{L}}^{-1}_{\eta}$$ is a trivialization of cubical line bundles, where $\boldsymbol{1}_{Y_\eta}$ denotes the trivial cubical line bundle on $Y_\eta.$ For each $(y,\xi) \in Y\times X$, the trivialization $\tau$ identifies $(c(y), c^t(\xi))^\ast\mathscr{P}_{E,\eta}^{-1}$ with a fractional ideal of $\Og_K$, which is equal to $\mathfrak{m}_K^{b(y,\xi)}$ by the definition of $b$. The function $a\colon Y\to \Z$ is constructed similarly using the trivialization $\psi.$ As a fist step towards understanding the fixed locus of the action of $H$ on $P$, we have the following
\begin{lemma} \label{squarelemma}
Let $A$ be an Abelian surface over $K$ with semiabelian reduction and such that $A[2]$ is constant over $K.$ Let $(G, \mathscr{L}, \mathscr{M})\in \mathrm{DEG}^{\mathrm{split}}_{\mathrm{ample}}$ with $G=\mathscr{A}^0$ and such that the finite group $H$ acts on this object as before. Then the embedding $Y\to \tilde{G}(K)$ coming from the object $F((G, \mathscr{L}, \mathscr{M}))\in\mathrm{DD}^{\mathrm{split}}_{\mathrm{ample}}$ has the following property: For each $y\in Y,$ there exists an $x\in \tilde{G}(K)$ such that $\iota(y)=x^2.$
\end{lemma}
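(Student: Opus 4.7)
The plan is to reduce the lemma to a counting argument on $K$-rational $2$-torsion, using the Mumford--Raynaud uniformization $A(K) = \tilde{G}(K)/\iota(Y)$. The idea is that the condition ``$\iota(y)$ is a square in $\tilde{G}(K)$'' is equivalent to the existence of a $K$-rational $2$-torsion point of $A$ whose Mumford lift $u \in \tilde{G}(K)$ satisfies $u^2 = \iota(y)$ (up to elements of $\iota(Y)^2$), and the $K$-rationality of $A[2]$ will provide enough such lifts.

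The first step is to construct a homomorphism $\delta\colon A[2](K) \to Y/2Y$ by $[u] \mapsto \iota^{-1}(u^2) \bmod 2Y$. This is well-defined because replacing $u$ by $u\cdot\iota(y'')$ changes $u^2$ by $\iota(y'')^2 = \iota(2y'')$. A direct check shows that the kernel of $\delta$ is $\tilde{G}[2](K)$: the condition $u^2 = \iota(y')^2$ for some $y' \in Y$ is equivalent to $u\iota(y')^{-1} \in \tilde{G}[2](K)$, and since $Y$ is torsion-free one has $\iota(Y) \cap \tilde{G}[2](K) = \{1\}$. This yields a left exact sequence
$$0 \to \tilde{G}[2](K) \to A[2](K) \xrightarrow{\delta} Y/2Y,$$
and the lemma becomes equivalent to the surjectivity of $\delta$.

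The second step is to compute the orders of the three groups. The hypothesis that $A[2]$ is constant gives $|A[2](K)| = 16$, and $|Y/2Y| = 2^r$ where $r$ is the toric rank of $A$. For $|\tilde{G}[2](K)|$ I would use that the residue characteristic is different from $2$ and that $\Og_K$ is strictly Henselian: the finite flat group scheme $\tilde{G}[2]$ is étale, and the Raynaud extension, together with the étale surjectivity of $[2]$ on the split torus $T$, yields a short exact sequence $0 \to T[2] \to \tilde{G}[2] \to E[2] \to 0$ which remains exact on $K$-points (because $H^1_{\et}(\Og_K, T[2]) = 0$). Since $T$ is split of rank $r$ one has $|T[2](K)| = 2^r$, and trivially $|E[2](K)| \leq 2^{2 \dim E} = 2^{4-2r}$, hence $|\tilde{G}[2](K)| \leq 2^{4-r}$. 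Combining these estimates,
$$16 = |A[2](K)| \leq |\tilde{G}[2](K)| \cdot |Y/2Y| \leq 2^{4-r} \cdot 2^r = 16,$$
which forces equality throughout and surjectivity of $\delta$. Unwinding then gives, for each $y \in Y$, an element $[u] \in A[2](K)$ with $u^2 = \iota(y)\iota(y')^2$ for some $y' \in Y$, so that $x := u\iota(y')^{-1} \in \tilde{G}(K)$ satisfies $\iota(y) = x^2$.

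The main potential obstacle is justifying the Mumford--Raynaud uniformization $A(K) = \tilde{G}(K)/\iota(Y)$ precisely enough on $K$-points (as opposed to merely on rigid-analytic spaces), but this is standard: the discrete free action of $\iota(Y)$ on $\tilde{G}^{\mathrm{rig}}$ has no $H^1$-obstruction on $K$-points (see \cite{FC}, Chapter III). Once this identification is granted, the rest is forced by the hypothesis that $A[2]$ is constant. Note that the $H$-action plays no role in this lemma; it will enter only later, in the analysis of the fixed locus of $[-1]$ on the model $P$.
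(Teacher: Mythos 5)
Your argument is correct, but it takes a different route from the paper. The paper's proof is much shorter: it invokes \cite{FC}, Chapter III, Theorem 5.9, which gives a scheme-theoretic decomposition $A[2]\cong \coprod_{[y]\in Y/2Y} Z_y$ with $Z_y=[2]^{-1}(\iota(y))\subseteq \tilde{G}$ a nonempty finite $K$-scheme, and then simply observes that a constant group scheme must have a $K$-point in each clopen piece, so each $Z_y(K)\not=\emptyset$. You instead work only with $K$-points: you use the uniformization $A(K)=\tilde{G}(K)/\iota(Y)$ to build the left exact sequence $0\to \tilde{G}[2](K)\to A[2](K)\to Y/2Y$ and force surjectivity of the last map by counting. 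Both proofs ultimately rest on the Faltings--Chai theory, but on different consequences of it; yours trades the precise scheme-theoretic form of Theorem 5.9 for the (standard, but still nontrivial) surjectivity of $\tilde{G}(K)\to A(K)$, which you correctly identify as the point needing justification (the fibre of $\tilde{G}^{\mathrm{an}}\to A^{\mathrm{an}}$ over a $K$-point is an \'etale $Y$-torsor, trivial since $\Hom_{\mathrm{cont}}(\Gal(\overline{K}/K), Y)=0$). Your counting argument yields slightly more as a byproduct --- equality throughout forces $\tilde{G}[2]$ and $E[2]$ to be constant as well --- though the paper has no need of this. Two small remarks: the exactness of $0\to T[2](K)\to\tilde{G}[2](K)\to E[2](K)$ on the right is not needed (only the injection $\tilde{G}[2](K)/T[2](K)\hookrightarrow E[2](K)$, i.e.\ left exactness, enters your bound), and the vanishing you quote should be phrased over $\Spec\Og_K$ (where $\tilde{G}[2]$ is finite \'etale, hence constant since $\Og_K$ is strictly Henselian) rather than over $K$, where $H^1$ of $T[2]$ need not vanish; neither issue affects the proof.
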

\begin{proof}
It follows from \cite{FC}, Chapter III, Theorem 5.9 that $A[2]=G_{\eta}[2]$ can be described as follows:
For each $y\in Y,$ let $Z_{y}$ be pre-image of the point $\iota(y)\in \tilde{G}(K)$ under the map $\tilde{G}\to \tilde{G}$ given by multiplication by 2. The schemes $Z_{y}$ and $Z_{y+2z}$ are canonically isomorphic for any $z\in Y$, and we obtain an isomorphism of schemes
$$G_{\eta}[2]\cong \coprod_{[y]\in Y/2Y} Z_{y}.$$ For $G_{\eta}[2]$ to be a constant $K$-group scheme, it is therefore necessary that for each $y\in Y$, there exist $x\in \tilde{G}(K)$ such that $\iota(y)=x^2$.
\end{proof}
\subsection{Evaluating points of $\tilde{G}$ at characters of $T$}
We have already seen that the group $H$ preserves the stratification of the special fibre of the model $P$ from Theorem \ref{Pexistencetheorem}, and that the action on the set of strata is given in terms of the pairing $b\colon Y\times X \to \Z.$ What will enable us to deduce that $\mathscr{A}[2]\subseteq P$ is already the full fixed locus of the action of $H$ on $P$ is the observation that for all $(y, \xi)\in Y\times X,$ the integer $b(y, \xi)$ is even. In order to deduce this from the previous Lemma, we will need a way of \it evaluating points of $\tilde{G}(K)$ at characters of $T$. \rm More precisely, we shall construct, for each $\xi\in X=X^\ast(T),$ a homomorphism 
$$\ev_\xi\colon \tilde{G}(K)\to \Z,$$ which has the property that for all $(y, \xi)\in Y\times X,$ we have
$$b(y, -\xi)=\ev_{\xi}(\iota(y)).$$ This, together with Lemma \ref{squarelemma}, will imply the claim.  Let $x\in \tilde{G}(K).$ Then the image $j_\eta$ of $x$ under the morphism $\pi\colon \tilde{G}\to E$ extends uniquely to a section $j$ of $E$ over $\Og_K$. Now let $\xi\in X$, and consider the morphism of extensions
\begin{align}\begin{CD} \label{CD}
0@>>>T@>>>\tilde{G}@>>>E@>>>0\\
&&@V{\xi}VV@V{\tilde{\xi}}VV@VV{\mathrm{Id}}V\\
0@>>>\Gm @>>> \Og_{-\xi} @>>> E@>>>0.
\end{CD}\end{align}
We shall continue passing freely between line bundles and their associated $\Gm$-torsors as usual. However, the distinction will play more of a role in this chapter, so the reader is advised always to keep in mind which of the two objects is being referred to. We shall try and keep the notation as clear as possible so that no confusion can arise. 
The fact that the diagram above is a homomorphism of extensions follows from \cite{FC}, p. 43. The point $x$ gives rise to a point in the fibre of $\Og_{-\xi}$ above $j_\eta$, which we shall also denote by $x$. This point, in turn, gives rise to an isomorphism of line bundles
$$j_\eta^\ast \Og_{-\xi, \eta}\to\Og_\eta$$ on $\Spec K,$ defined by $x\mapsto 1.$ This isomorphism identifies the $\Og_K$-module $\Gamma(\Spec \Og_K, j^\ast \Og_{-\xi})$ with a fractional ideal of $\Og_K$, which is equal to $\mathfrak{m}_K^{\ev_{\xi}(x)}$ for some $\ev_{\xi}(x)\in \Z.$ 
\begin{lemma}
Let $\xi\in X$. Then the map 
$$\ev_\xi\colon \tilde{G}(K)\to \Z$$ is a homomorphism. \label{homomorphismlemma}
\end{lemma}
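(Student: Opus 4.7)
The plan is to derive the homomorphism property from the observation that $\Og_{-\xi}$ is not just a line bundle on $E$, but a group scheme: concretely, it is the pushout of the Raynaud extension $0\to T\to \tilde{G}\to E\to 0$ along the character $\xi\colon T\to \Gm$, so it sits in an extension $0\to \Gm\to \Og_{-\xi}\to E\to 0$ of group schemes over $\Og_K$. The vertical map $\tilde{\xi}\colon\tilde{G}\to\Og_{-\xi}$ in the diagram (\ref{CD}) is the structure map of this pushout, and in particular is a homomorphism of group schemes. Hence, on $K$-points, $\tilde{\xi}(x_1 x_2)=\tilde{\xi}(x_1)\cdot\tilde{\xi}(x_2)$, where the product on the right is taken in the group $\Og_{-\xi}(K)$.

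Next, I would translate this group-theoretic multiplication into a statement about line bundles on $E$. As explained in \cite{FC}, Chapter I, the equivalence between (rigidified) $\Gm$-extensions of $E$ and cubical line bundles on $E$ identifies the group law on $\Og_{-\xi}$ with a canonical isomorphism
\begin{align*}
m^\ast\Og_{-\xi}\;\cong\;p_1^\ast\Og_{-\xi}\otimes p_2^\ast\Og_{-\xi}
\end{align*}
of invertible sheaves on $E\times_{\Og_K}E$ (compatible with the rigidification along the identity section). Pick now $x_1,x_2\in\tilde{G}(K)$ and set $j_{i,\eta}:=\pi(x_i)\in E(K)$. Since $E$ is proper over $\Og_K$, each $j_{i,\eta}$ extends uniquely to a section $j_i\in E(\Og_K)$, and by functoriality of the group law on $E$ the sum $j_1+j_2\in E(\Og_K)$ extends $j_{1,\eta}+j_{2,\eta}=\pi(x_1 x_2)$. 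Pulling back the canonical isomorphism above along $(j_1,j_2)\colon\Spec\Og_K\to E\times_{\Og_K}E$, I obtain a canonical identification of invertible $\Og_K$-modules
\begin{align*}
(j_1+j_2)^\ast\Og_{-\xi}\;\cong\;j_1^\ast\Og_{-\xi}\otimes j_2^\ast\Og_{-\xi}.
\end{align*}

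Finally, I would unwind the definition of $\ev_\xi$ under this identification. The trivialization of $j_{i,\eta}^\ast\Og_{-\xi,\eta}$ coming from $x_i$ identifies $\Gamma(\Spec\Og_K,j_i^\ast\Og_{-\xi})$ with the fractional ideal $\mathfrak{m}_K^{\ev_\xi(x_i)}$, and the fact that $\tilde{\xi}$ is a group homomorphism means that on the generic fibre the displayed isomorphism sends the trivialization defined by $x_1 x_2$ to the tensor product of the trivializations defined by $x_1$ and $x_2$. Since the fractional ideal of a tensor product of invertible $\Og_K$-modules is the product of the corresponding fractional ideals, we conclude
\begin{align*}
\mathfrak{m}_K^{\ev_\xi(x_1 x_2)}\;=\;\mathfrak{m}_K^{\ev_\xi(x_1)}\cdot\mathfrak{m}_K^{\ev_\xi(x_2)}\;=\;\mathfrak{m}_K^{\ev_\xi(x_1)+\ev_\xi(x_2)},
\end{align*}
which gives the claim.

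The only real subtlety is bookkeeping: one must carefully pass between $\Og_{-\xi}$ regarded as a line bundle and as its associated $\Gm$-torsor, and verify that the group law on the torsor corresponds, on the line-bundle side, to the canonical cubical isomorphism $m^\ast\Og_{-\xi}\cong p_1^\ast\Og_{-\xi}\otimes p_2^\ast\Og_{-\xi}$. This is exactly the equivalence of categories between $\Gm$-extensions and rigidified (equivalently cubical) line bundles on $E$ recalled in \cite{FC}, Chapter I, so no genuinely new work is needed; the homomorphism property of $\ev_\xi$ is a direct consequence.
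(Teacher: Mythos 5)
Your proof is correct and follows essentially the same route as the paper's: both reduce the claim to the fact that the group structure on the pushout torsor $\Og_{-\xi}$ identifies $j_1^\ast\Og_{-\xi}\otimes j_2^\ast\Og_{-\xi}$ with $(j_1+j_2)^\ast\Og_{-\xi}$ compatibly with the integral structures over $\Og_K$, so that the associated fractional ideals multiply. The only difference is packaging: the paper produces this isomorphism from the theorem of the cube and then normalizes it by hand against chosen generators $\epsilon_i$ and the rigidification $1_{-\xi}$, whereas you extract it, already correctly normalized, from the equivalence between $\Gm$-extensions of $E$ and rigidified primitive line bundles --- which is the same underlying fact.
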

\begin{proof}
Let $x_1, x_2\in \tilde{G}(K).$ Note that the scheme underlying the $\Gm$-torsor $\Og_{-\xi}$ comes with a natural structure of a group scheme over $\Og_K$ such that the map $\tilde{\xi}$ is a homomorphism of group schemes. This follows from the fact that the $\Gm$-torsor $\Og_{-\xi}$ fits into the diagram (\ref{CD}). Denote the images of $x_i$ in $\Og_{-\xi}$ also by $x_i$. For $i=1,2$, let $j_{i, \eta}$ be the image of $x_i$ under $\pi\colon \tilde{G}\to E$. The morphisms $j_{i,\eta}$ extend uniquely to $\Og_K$-sections $j_i$ of $E$. For each $i=1,2$ choose a no-where vanishing section $\epsilon_i$ of the line bundle $j_i^\ast \Og_{-\xi}$ on $\Spec\Og_K$. Also recall that the line bundle $\Og_{-\xi}$ comes with a natural rigidification, that is a no-where vanishing section $1_{-\xi}$ of $e^\ast\Og_{-\xi}$, where $e\colon \Spec\Og_K\to E$ denotes the identity section.
For any global section $j\in E(\Og_K)$, denote by $T_j\colon E\to E$ the morphism given by translation by $j$. Let us first prove that there is an isomorphism 
\begin{align}\Phi\colon T_{j_1+j_2}^\ast\Og_{-\xi}\otimes \Og_{-\xi}\to T_{j_1}^\ast\Og_{-\xi}\otimes T_{j_2}^\ast\Og_{-\xi} \label{iso} \end{align}  sending $T_{j_1+j_2}^\ast(\epsilon_1\epsilon_2)\otimes 1_\xi$ to $T_{j_1}^\ast(\epsilon_1)\otimes T_{j_2}^\ast(\epsilon_2).$ The existence of an isomorphism as in (\ref{iso}) (not necessarily satisfying the second condition) follows from the theorem of the cube (\cite{FC}, Chapter I, Theorem 1.3). Because the sections $\epsilon_i$ were chosen to be no-where vanishing, the elements $T_{j_1+j_2}^\ast(\epsilon_1\epsilon_2)\otimes 1_{-\xi}$ and $T_{j_1}^\ast(\epsilon_1)\otimes T_{j_2}^\ast(\epsilon_2)$ of $\Gamma(\Spec \Og_K, (j_1+j_2)^\ast \Og_{-\xi}\otimes e^\ast\Og_{-\xi})$ and $\Gamma(\Spec\Og_K, j_1^\ast\Og_{-\xi}\otimes j_2^\ast \Og_{-\xi})$ are generators of their respective modules. This implies that any such isomorphism will satisfy the second condition after scaling by a unit of $\Og_K.$ Now choose, for $i=1,2$, elements $\lambda_i\in K^\times$ such that $x_i=\lambda_i\epsilon_{i, \eta}$. We shall use the fact that the $\lambda_i$ can be seen both as elements of $K^\times$ and as $K$-points of $\Og_{-\xi}$ via the embedding $\Gm\to \Og_{-\xi}$ from the diagram (\ref{CD}), and that multiplying the element $\epsilon_{i, \eta}\in \Og_{-\xi}(K)$ by $\lambda_i$ using the $\Og_K$-group structure of $\Og_{-\xi}$ and multiplying the global section $\epsilon_{i, \eta}$ of $j_{i, \eta}^\ast \Og_{-\xi, \eta}$ by $\lambda_i$ using the $K$-vector space structure of $\Gamma(\Spec K, j_{i, \eta}^\ast \Og_{-\xi, \eta})$ has the same effect. Indeed, using this last observation, we find that under the isomorphism $\Phi_\eta,$ 
\begin{align*}
T_{j_{1,\eta}+j_{2, \eta}}^\ast (x_1x_2)\otimes 1_{-\xi, \eta}&=\lambda_1\lambda_2 T_{j_{1,\eta}+j_{2, \eta}}^\ast (\epsilon_{1, \eta}\epsilon_{2, \eta})\otimes 1_{-\xi, \eta}\\
&\mapsto \lambda_1\lambda_2 T_{j_{1, \eta}}^\ast(\epsilon_{1, \eta})\otimes T_{j_{2, \eta}}^\ast(\epsilon_{2, \eta})\\
&=T_{j_{1, \eta}}^\ast(x_1)\otimes T_{j_{2, \eta}}^\ast(x_2).
\end{align*}
This implies in particular that the diagram
$$\begin{CD}
(j_{1,\eta}+j_{2, \eta})^\ast \Og_{-\xi, \eta} \otimes e_{\eta}^\ast\Og_{-\xi, \eta} @>{e_\eta^\ast\Phi_\eta}>>j_{1, \eta}^\ast\Og_{-\xi, \eta}\otimes j_{2, \eta}^\ast\Og_{-\xi, \eta}\\
@V{x_1x_2}VV\!\!\!\!\!\!\!\!\!\!\!\!\!\!\!\!\!\!\!\!\!\!\!\!\!\!\!\!\!\!\!\!\!\!\!\!\!\!\!\!\!\!\!\!\!\!\!\!\!\!\!\!\!\!\!\!\!\!\!\!\!\!\!\!\!\!\!\!\!\!\!\!\!\!\!\!\!\!\!\!\!\!\!\!\!\!\!\!\!\!\!\!\!\!\!\!\!\!\!\!\!\!\!\!\!\!\!\!\!\!\!\!\!\!\!\!@V{1_{-\xi, \eta}}VV \!\!\!\!\!\!\!\!\!\!\!\!\!\!\!\!\!\!\!\!\!\!\!\!\!\!\!\!\!\!\!\!\!\!\!\!\!\!@V{x_1}VV\hspace{0.4 in}@VV{x_2}V\\
{}\hspace{0.7 in}\Og_\eta\hspace{0.2 in}\otimes \hspace{0.2 in}\Og_\eta @>>>  \hspace{0.12 in} \Og_\eta\hspace{0.07 in}\otimes\hspace{0.07 in} \Og_\eta\\
\end{CD}$$
commutes. Since the isomorphism $e_\eta^\ast \Phi_\eta$ extends to the pullback of the isomorphism (\ref{iso}) along $e\colon \Spec\Og_K\to E,$ and the section $1_{-\xi, \eta}$ extends to $1_{-\xi}$, we find that the morphism given by $x_1x_2$ identifies $\Gamma(\Spec\Og_K, (j_1+j_2)^\ast\Og_{-\xi})$ with the product of the fractional ideals of $\Og_K$ given by 
$$\Gamma(\Spec\Og_K, j_i^\ast\Og_{-\xi})\to \Gamma(\Spec K, j_{i, \eta}^\ast\Og_{-\xi, \eta})\xrightarrow{x_i\mapsto 1} K$$ for $i=1,2$. In particular, $\ev_\xi(x_1x_2)=\ev_{\xi}(x_1)+\ev_{\xi}(x_2),$ so the claim follows. 
\end{proof}\\
Now we must prove that for all $\xi\in X$ and $y\in Y$ the equality
$$b(y,-\xi)=\ev_\xi(\iota(y))$$ holds. We shall need the following
\begin{lemma}
Let $T_1,$ $T_2$ be algebraic tori, and let $E$ be an Abelian variety over $K$. Let $G_1$, $G_2$ be commutative algebraic groups over $K$ such that we have a commutative diagram
\begin{align}\begin{CD}
0@>>>T_1@>{i_1}>>G_1@>{\pi_1}>>E@>>>0\\
&&@V{\xi}VV@VV{\tilde{\xi}}V@VV{\mathrm{Id}}V\\
0@>>>T_2@>>{i_2}>G_2@>>{\pi_2}>E@>>>0 \label{diagram}
\end{CD}\end{align}
with exact rows. Suppose that $\gamma\colon G_1\to G_2$ is a homomorphism of torsors on $E$ with respect to $\xi$ (i.e., that $\gamma$ is a morphism of schemes compatible with the actions of $T_1$ and $T_2$ on $G_1$ and $G_2$ in the sense that $\gamma(t\cdot g)=\xi(t)\cdot \gamma(g)$ for $t\in T_1$ and $g\in G_1$) which fits into diagram (\ref{diagram}) instead of $\tilde{\xi}.$ Then $\gamma=\tilde{\xi}.$
\end{lemma}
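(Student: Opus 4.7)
The plan is to study the \emph{difference} between $\gamma$ and $\tilde{\xi}$ and to show it is trivial by a rigidity argument. I would first define $\delta\colon G_1\to G_2$ by $\delta(g):=\gamma(g)\tilde{\xi}(g)^{-1}$, using the group structure on $G_2$. Since both $\gamma$ and $\tilde{\xi}$ fit into diagram (\ref{diagram}), we have $\pi_2\circ \gamma=\pi_1=\pi_2\circ\tilde{\xi}$; because $\tilde{\xi}$ is a homomorphism of group schemes, $\pi_2(\tilde{\xi}(g)^{-1})=\pi_2(\tilde{\xi}(g))^{-1}=\pi_1(g)^{-1}$. Therefore $\pi_2\circ \delta$ is the constant map to the identity of $E$, so $\delta$ factors through the closed immersion $i_2\colon T_2\hookrightarrow G_2$. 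I will continue to write $\delta$ for the resulting morphism $G_1\to T_2$.

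Next, I would show that $\delta$ is invariant under the $T_1$-action on $G_1$. For $t\in T_1$ and $g\in G_1$, the torsor-compatibility of $\gamma$ with respect to $\xi$ gives $\gamma(i_1(t)g)=i_2(\xi(t))\gamma(g)$, and the homomorphism property of $\tilde{\xi}$ combined with the commutativity of the diagram gives $\tilde{\xi}(i_1(t)g)=i_2(\xi(t))\tilde{\xi}(g)$. Since $G_2$ is commutative, the two factors of $i_2(\xi(t))$ cancel in $\delta(i_1(t)g)$, and we obtain $\delta(i_1(t)g)=\delta(g)$. Thus $\delta$ descends along $\pi_1\colon G_1\to E$ to a morphism $\bar{\delta}\colon E\to T_2$.

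Finally, I would invoke the rigidity of morphisms from proper varieties to affine group schemes: since $E$ is a geometrically integral proper $K$-scheme, every morphism $E\to T_2$ is constant (for instance because $T_2\cong \Spec K[\Lambda]$ for a character lattice $\Lambda$, while $\Gamma(E,\Og_E^\times)=K^\times$). To pin down the constant, precompose with the identity section of $E$; this corresponds to precomposing $\delta$ with $i_1$, and both $\gamma\circ i_1$ and $\tilde{\xi}\circ i_1$ equal $i_2\circ\xi$ by the diagram. Hence $\bar{\delta}$ takes the value $1\in T_2$, so $\delta$ is trivial and $\gamma=\tilde{\xi}$.

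The only genuine content is the commutativity-driven cancellation in the second paragraph, which turns the torsor-morphism difference into a bona fide morphism to the structure torus; once this is in place, the remainder is the standard rigidity principle. The main conceptual point to be careful about is that $\gamma$ is \emph{not} assumed to be a group homomorphism, so it is essential to form the difference with $\tilde{\xi}$ using the group structure on $G_2$ and then to exploit torsor-compatibility together with commutativity of $G_2$ to make everything descend to $E$.
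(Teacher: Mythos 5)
Your proposal is correct and follows essentially the same route as the paper's own proof: form the difference $\gamma\tilde{\xi}^{-1}$, observe that it factors through $i_2$ and is constant on the fibres of $\pi_1$ (using commutativity of $G_2$ to cancel the $i_2(\xi(t))$ factors), descend to a morphism $E\to T_2$ that is constant by properness of $E$ and affineness of $T_2$, and pin down the constant as $1$ by evaluating over the identity of $E$. No gaps.
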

\begin{proof}
We shall use multiplicative notation for the $T_j$ and $G_j$, and additive notation for $E.$ First notice that $\gamma\tilde{\xi}^{-1}$ factors through $i_2$ since $\pi_2\circ(\gamma\tilde{\xi}^{-1})=\pi_2\circ \gamma-\pi_2\circ \tilde{\xi}=0,$ where we used the commutativity of diagram (\ref{diagram}) and the final assumption for the last equality. Also observe that $\gamma\tilde{\xi}^{-1}$ is constant on the fibres of $\pi_1$ since $\gamma\tilde{\xi}^{-1}(tg)=i_2(\xi(t))\gamma(g)i_2(\xi(t))^{-1}\tilde{\xi}(g)^{-1}=\gamma\tilde{\xi}^{-1}(g)$ for $t\in T_1$ and $g\in G_1.$ In particular, we find that there must exist a map of schemes $\lambda\colon E\to T_2$ such that $\gamma\tilde{\xi}^{-1}=i_2\circ \lambda\circ \pi_1.$ To conclude, all we have to show is that $\lambda$ is constant with value 1. Since $E$ is connected and proper over $K$ and $T_2$ is affine, $\lambda$ must be constant. Suppose $e$ is the neutral element of $E$. Then the value $i_2(\lambda(e))$ is equal to $\gamma\tilde{\xi}^{-1}(i_1(t)),$ for any $t\in T_1.$ We find $\gamma\tilde{\xi}^{-1}(i_1(t))=\gamma(i_1(t))\tilde{\xi}(i_1(t))^{-1}=i_2(\xi(t))i_2(\xi(t))^{-1}=1.$ Hence the claim follows. 
\end{proof}\\
We are now ready to prove
\begin{proposition}
For all $\xi\in X$ and $y\in Y$ we have \label{bevproposition}
$$b(y,-\xi)=\ev_{\xi}(\iota(y)).$$
\end{proposition}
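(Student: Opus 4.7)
The plan is to realize both $b(y,-\xi)$ and $\ev_\xi(\iota(y))$ as the valuation of one and the same generic section of one and the same line bundle on $\Spec\Og_K$, namely $c^t(y)^\ast\Og_{-\xi}$.

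First, I would identify the $\Gm$-torsor $\Og_{-\xi}$ in Poincar\'e-bundle terms. The bottom row of diagram~(\ref{CD}) displays $\Og_{-\xi}\to E$ as the pushout of the Raynaud extension $\tilde G\to E$ by the character $\xi\colon T\to \Gm$, and under the classification of rigidified extensions $0\to\Gm\to\cdot\to E\to 0$ by $E^\vee(\Og_K)$ via $\mathscr P_E$, this pushout has class $c(\xi)\in E^\vee(\Og_K)$. Therefore, as $\Gm$-torsors on $E$, one has $\Og_{-\xi}\cong (\mathrm{id}_E,c(\xi))^\ast\mathscr{P}_E$. The previous Lemma is invoked here to guarantee that the natural biextension-theoretic morphism $\tilde G\to \Og_{-\xi}$ produced by the Poincar\'e bundle coincides with $\tilde\xi$, eliminating any ambiguity. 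Pulling back along $c^t(y)$ and using the biextension inversion identity $(e,c(-\xi))^\ast\mathscr{P}_E\cong (e,c(\xi))^\ast\mathscr{P}_E^{-1}$ yields a canonical isomorphism
$$c^t(y)^\ast\Og_{-\xi}\;\cong\;(c^t(y),c(-\xi))^\ast\mathscr{P}_E^{-1}$$
of line bundles on $\Spec\Og_K$.

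Next, I would show that under this isomorphism the generic section $\tilde\xi(\iota(y))\in c^t(y)^\ast\Og_{-\xi,\eta}$ corresponds to $\tau_{y,-\xi}(1)\in (c^t(y),c(-\xi))^\ast\mathscr{P}_{E,\eta}^{-1}$. This is essentially built into the construction of $\iota$ from $\tau$ recalled in \cite{K�}, (2.2) (see in particular p.~173, (10)): the embedding $\iota$ is the unique lift of $c^t$ to $\tilde G(K)$ compatible with the trivialization $\tau$ under the biextension structure of $\mathscr P_E^{-1}$, so pushing $\iota(y)$ forward along $\tilde\xi$ (now identified in Step~1 with the Poincar\'e-bundle pushout map) tautologically recovers the $(-\xi)$-component of $\tau$. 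The proposition then follows by comparing valuations: by construction, $b(y,-\xi)$ is the integer $n$ such that $\tau_{y,-\xi}(1)$ generates $\mathfrak m_K^{-n}$ relative to any $\Og_K$-generator of $(c^t(y),c(-\xi))^\ast\mathscr{P}_E^{-1}$, and $\ev_\xi(\iota(y))$ is characterized by the analogous condition on $\tilde\xi(\iota(y))$; since the isomorphism of Step~1 is one of integral line bundles and the sections of Step~2 are matched, these integers coincide.

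The main obstacle is Step~2: the literal identification of $\tau_{y,-\xi}(1)$ with $\tilde\xi(\iota(y))$ under the Poincar\'e-bundle picture. The previous Lemma is the crucial enabling tool, because it forces $\tilde\xi$ to agree with the natural biextension pushout map, after which the matching of trivializations reduces to unwinding the very definition of $\iota$ from $\tau$ and careful bookkeeping with the biextension inversion that accounts for the sign flip $\xi\mapsto -\xi$.
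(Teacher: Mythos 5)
Your plan follows the same route as the paper's own proof: the uniqueness Lemma pins down $\tilde\xi$ as the natural Poincar\'e-bundle/biextension map, $\tilde\xi(\iota(y))$ is then identified with the relevant component of the trivialization $\tau$ via K\"unnemann's construction of $\iota$ from $\tau$ (p.\ 173, (10)), and the two integers are read off as valuations of one generic section of $c^t(y)^\ast\Og_{-\xi}$. The paper implements your Step~2 concretely by describing a $K$-point of $\tilde G$ as a compatible system of functionals $\delta_\xi$ and checking $\tilde\xi(\iota(y))=\delta_\xi=\tau(y,\xi)$ together with $\delta_{-\xi}(\delta_\xi)=1$ --- exactly the ``biextension inversion'' bookkeeping for the sign flip $\xi\mapsto-\xi$ that you flag as the main obstacle --- so the two arguments coincide in substance.
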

\begin{proof}
Observe that, for all $\xi\in X,$ there is a perfect pairing $\Og_\xi\otimes \Og_{-\xi}\to \Og_E.$ We shall first show that the image of a local section $\chi$ of $\pi\colon\tilde{G}\to E$ under $\tilde{\xi}\colon \tilde{G}\to \Og_{-\xi}$ acts on a local section $f$ of $\Og_{\xi}$ as $f\mapsto \chi^\ast f.$ By the Lemma preceding this Proposition, all we have to show is that the map $\tilde{G}\to \Og_{-\xi}$ given by $\chi\mapsto (f\mapsto \chi^\ast f)$ fits into diagram (\ref{CD}) instead of $\tilde{\xi}.$ This will follow if we can show that it induces the map $\xi\colon T\to \Gm$ on the fibre above the identity of $E$. But this is the case, because the image of $1$ under the closed immersion $\Gm\to \Og_{-\xi}$ acts on a local section $f$ of $\Og_\xi$ (on an open set of $E$ which contains the identity $e\in E$) as $f\mapsto f(1),$ and we have $e^\ast\chi^\ast f=f(\chi(e))=\xi(\chi(e))f(1).$\\
Now suppose we have a point $x\in \tilde{G}(K).$ Denote the image of $x$ under $\tilde{G}(K)\to E(K)$ by $j_\eta.$ The point $x$ determines (and is determined by) a homomorphism of quasi-coherent $\Og_{E_{\eta}}$-algebras
$$\pi_\ast\Og_{\tilde{G}_{\eta}}=\bigoplus_{\xi\in X} \Og_{\xi, \eta} \to j_{\eta\ast}\Og_\eta,$$ which, in turn, is given by a compatible system of non-zero elements $\delta_\xi\in j_\eta^\ast\Og_{\xi, \eta}^\vee.$ As a next step, we shall show that the image of $x$ under the map $\tilde{\xi}\colon \tilde{G}\to \Og_{-\xi}$ from diagram (\ref{CD}) is precisely $\delta_{\xi}.$ To see this, suppose we have a local section $\chi$ of $\pi\colon \tilde{G}_\eta\to E_\eta$ such that $\chi(j_\eta)=x.$ Suppose also that we have a $\xi$-eigenfunction $f$ above the open set on which $\chi$ is defined. We have
\begin{align*}
(j_\eta^\ast \tilde{\xi}(\chi))(j_\eta^\ast f)&=j_\eta^\ast(\tilde{\xi}(\chi)(f))\\
&=j_\eta^\ast\chi^\ast f\\
&=x^\ast f\\
&=\delta_{\xi}(j_\eta^\ast f).
\end{align*}
Hence $j_\eta^\ast \tilde{\xi}(\chi)=\delta_\xi,$ as desired.
We can now proceed to proving our claim. Let $y\in Y.$ By \cite{Kü}, p. 173 (10), the point $\iota(y)$ is given as follows: The trivialization
$$\tau\colon \boldsymbol{1}_{(Y\times X)_{\eta}} \to (c^t \times c)^\ast \mathscr{P}_{E_{\eta}}^{-1}$$ is given by a compatible system of sections $\tau(y, \xi)\in \Gamma(\eta, (c^t(y), c(\xi))^\ast \mathscr{P}_{E, \eta}^{-1}),$ which, in turn, defines a morphism of quasicoherent $\Og_{E_{\eta}}$-algebras
$$\pi_{\ast} \Og_{\tilde{G}_{\eta}}=\bigoplus_{\xi\in X}\Og_{\xi, \eta}=\bigoplus_{\xi} (\mathrm{Id}\times c(\xi))^\ast \mathscr{P}_{E, \eta} \xrightarrow{\tau(y, \xi)} c^t(y)_\ast \Og_{\eta}.$$ In particular, we see that, if $x=\iota(y),$ we have
$$\delta_{\xi}=\tau(y, \xi)$$ for all $\xi\in X.$  Now let $\xi\in X.$ By the definition of the pairing $b(-,-),$ (\cite{Kü}, p.174, 2.3), the number $b(y, -\xi)$ is the unique integer such that the fractional ideal of $\Og_K$ given by the image of
\begin{align}\Gamma(\Spec \Og_K, (c^t(y), c(-\xi))^\ast \mathscr{P}_{E})\subseteq \Gamma(\eta, (c^t(y), c(-\xi))^\ast \mathscr{P}_{E, \eta})\xrightarrow{\cong} K,\label{fracidealformula}\end{align} where the last isomorphism is $\tau(y, -\xi)=\delta_{-\xi},$ is equal to $\mathfrak{m}_K^{b(y, -\xi)}.$ However, we have a canonical isomorphism $(c^t(y), c(-\xi))^\ast \mathscr{P}_{E}=c^t(y)^\ast(\mathrm{Id}\times c(-\xi))^\ast \mathscr{P}_{E}=c^t(y)^\ast \Og_{-\xi}.$ By what we have done in the first part of this proof, we know that the isomorphism in (\ref{fracidealformula}) is the same as the isomorphism $\Gamma(\eta, c^t(y)^\ast \Og_{-\xi, \eta})\to K$ given by $\iota(y)\mapsto 1$ (it follows from the compatibility between the various $\delta_{\xi}$ that, modulo the canonical isomorphism $j_{\eta}^\ast \Og_{-\xi}=j_{\eta}^\ast \Og_{\xi}^\vee,$ we have $\delta_{-\xi}(\delta_{\xi})=1$). By definition, $\ev_{\xi}(\iota(y))$ is the unique integer such that the fractional of $\Og_K$ given by the image of 
$$\Gamma(\Spec \Og_K, c^t(y)^\ast \Og_{-\xi})\subseteq \Gamma(\eta, c^t(y)^\ast \Og_{-\xi, \eta})\xrightarrow{\cong} K,$$ where the last isomorphism is given by $\iota(y),$ is equal to $\mathfrak{m}_K^{\ev_{\xi}(y)}.$ Hence we find $\mathfrak{m}_K^{b(y, -\xi)}=\mathfrak{m}_K^{\ev_{\xi}(\iota(y))},$ so the claim follows. 
\end{proof}\\
We have now arrived at
\begin{proposition}
The assumption that $A[2]$ be constant implies that, for all $\xi\in X$ and $y\in Y,$ the integer $b(y,\xi)$ is even. \label{evenproposition}
\end{proposition}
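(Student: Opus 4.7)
The plan is to combine the three preceding results in essentially one line. By Lemma \ref{squarelemma}, for each $y\in Y$ there exists $x\in\tilde{G}(K)$ such that $\iota(y)=x^{2}$. Applying the evaluation homomorphism $\ev_{\xi}\colon\tilde{G}(K)\to\Z$ from Lemma \ref{homomorphismlemma} and invoking Proposition \ref{bevproposition}, I would write
\[
b(y,-\xi)\;=\;\ev_{\xi}(\iota(y))\;=\;\ev_{\xi}(x^{2})\;=\;2\,\ev_{\xi}(x),
\]
so $b(y,-\xi)\in 2\Z$. Since $\xi$ ranges over all of $X$, so does $-\xi$, and hence $b(y,\xi)$ is even for every $(y,\xi)\in Y\times X$.

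There is essentially no obstacle: the technical work has already been carried out in the two lemmas and the preceding proposition. The only thing worth being careful about in the write-up is that the element $x\in\tilde{G}(K)$ produced by Lemma \ref{squarelemma} need not be unique (it is only determined up to $\tilde{G}(K)[2]$), but this ambiguity is harmless because $\ev_{\xi}$ is a homomorphism into the torsion-free group $\Z$, so it is automatically trivial on $\tilde{G}(K)[2]$ and $\ev_{\xi}(x)$ depends only on $y$. Thus the displayed chain of equalities is well-defined and yields the desired parity.
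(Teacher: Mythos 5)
Your proof is correct and follows essentially the same route as the paper: combine Lemma \ref{squarelemma} with the homomorphism property of $\ev_{\xi}$ and Proposition \ref{bevproposition} to get $b(y,-\xi)=2\ev_{\xi}(x)$; the paper merely flips the sign via bilinearity of $b$ where you re-index $\xi\mapsto-\xi$, which is the same thing. Your remark about the ambiguity of $x$ is harmless but unnecessary, since any choice of $x$ with $\iota(y)=x^{2}$ already yields the parity conclusion.
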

\begin{proof}
Let $\xi\in X$ and $y\in Y.$ By Lemma \ref{squarelemma}, there exists a point $x\in \tilde{G}(K)$ such that $\iota(y)=x^2.$ By Proposition \ref{bevproposition}, we have
$b(y, \xi)=-\ev_\xi(\iota(y))=-2\ev_\xi(x).$
\end{proof}
\subsection{The final step}
We have now assembled all the technical tools which we need to prove our main result. Let $A$ be an Abelian surface over $K$. We may assume without loss of generality that $A$ has semiabelian reduction over $K$ and that $A[2]$ is constant. Choose an object 
$$(G, \mathscr{L}, \mathscr{M})\in \mathrm{DEG}^{\mathrm{split}}_{\mathrm{ample}},$$ where $G=\mathscr{A}^0,$ on which the finite group $H=\{\mathrm{Id}, [-1]\}$ acts. As before, $\mathscr{A}$ denotes the Néron model of $A$. Using the functor
$F\colon \mathrm{DEG}^{\mathrm{split}}_{\mathrm{ample}}\to \mathrm{DD}^{\mathrm{split}}_{\mathrm{ample}}, $ we associate to $(G, \mathscr{L}, \mathscr{M})$ split ample degeneration data. In particular, we obtain the object $(X,Y,\phi,a,b)\in \mathcal{C}$ introduced before. One easily convinces oneself that the induced action of $[-1]$ on $X$ is given by multiplication by $-1.$ After passing to a finite extension of $K$ if necessary, we may assume that there is a smooth rational polyhedral cone decomposition $\{\sigma_\alpha\}_{\alpha\in I}$ of $\mathscr{C}:=(X^\vee_\mathbf{R}\times \mathbf{R}_{>0}) \cup \{0\} \subseteq X^\vee_\mathbf{R}\oplus \mathbf{R}$ which has the properties (a),..., (d) listed in Corollary \ref{Pexistencetheorem}; this follows from Proposition \ref{semconeexistenceproposition}. As in the previous sections, $P$ will denote the model of $A$ from Theorem \ref{Pexistencetheorem}.
What follows now is a central step in our argument: 
\begin{lemma}
Let $\{\sigma_{\alpha}\}_{\alpha\in I}$ be a smooth $\Gamma$-admissible rational polyhedral cone decomposition of $\mathscr{C}.$ Let $\alpha\in I$ such that $\sigma_{\alpha}$ is either two-dimensional or three-dimensional, or that it is one-dimensional with primitive element $(\ell, s)$, where $s>1.$ Then there does not exist an element $y\in Y$ such that \label{conefreelemma}
$$S_{(y, \mathrm{Id})}(\sigma_{\alpha})=S_{(0,[-1])}(\sigma_{\alpha}).$$ In other words, the group $H=\{\mathrm{Id}, [-1]\}$ acts freely on the set of equivalence classes $J/Y$, where $J\subset I$ is the set of indices $\beta$ such that $\sigma_{\beta}$ is two-dimensional or three-dimensional, or one-dimensional with primitive element $(\ell, s)$, $s>1.$
\end{lemma}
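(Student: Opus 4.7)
The plan is to argue by contradiction. Suppose $y\in Y$ satisfies $S_{(y,\mathrm{Id})}(\sigma_\alpha) = S_{(0,[-1])}(\sigma_\alpha)$, and let $(\ell_1,s_1),\ldots,(\ell_d,s_d)$ be the primitive generators of the one-dimensional faces of $\sigma_\alpha$. Since $S_{(y,\mathrm{Id})}$ and $S_{(0,[-1])}$ are lattice automorphisms of $X^\vee\oplus\Z$, the primitive generators of the edges of the two cones in question are $(\ell_i+s_ib(y,-),s_i)$ and $(-\ell_i,s_i)$ respectively, so their equality produces a permutation $\pi\in\mathfrak{S}_d$ with $s_i=s_{\pi(i)}$ and $\ell_i+\ell_{\pi(i)}=-s_ib(y,-)$ for every $i$. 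By Proposition \ref{evenproposition}, I may write $b(y,-)=2c$ for some $c\in X^\vee$, which I will use systematically below.

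Next I would severely restrict the cycle type of $\pi$. Any fixed point $i$ of $\pi$ forces $2\ell_i=-2s_ic$, hence $(\ell_i,s_i)=s_i(-c,1)$; primitivity then requires $s_i=1$ and $\ell_i=-c$, so $\pi$ has at most one fixed point, whose edge is necessarily $(-c,1)$. A $3$-cycle $(i\ j\ k)$ of $\pi$ would give $\ell_i=\ell_j=\ell_k$ after pairwise subtraction of the three equations, reducing to the fixed-point case and collapsing all three edges to a single primitive vector, a contradiction.

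It remains to treat the three admissible cone dimensions. For a one-dimensional cone with $s_1>1$, $\pi$ must be the identity, whence $\ell_1=-s_1c$ and $(\ell_1,s_1)$ fails to be primitive. For a two-dimensional cone the only remaining option is $\pi=(1\ 2)$ with $s_1=s_2=:s$, forcing the primitive edges to be $(\ell_1,s)$ and $(-\ell_1-2sc,s)$; the heart of the argument here is the observation that every $2\times 2$ minor of the matrix
$$\begin{pmatrix}\ell_1 & s\\ -\ell_1-2sc & s\end{pmatrix}$$
is divisible by $2$, because the minor built from coordinates $i,j$ of $X^\vee$ equals $-2s(\ell_1^{(i)}c^{(j)}-\ell_1^{(j)}c^{(i)})$, while the minor involving the last column equals $2s(\ell_1^{(i)}+sc^{(i)})$. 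Hence these two primitive vectors cannot generate a direct summand of $X^\vee\oplus\Z$, contradicting the smoothness of $\sigma_\alpha$. For a three-dimensional cone only a single transposition together with one fixed point survives, and the three primitive edges $(\ell_1,s),(-\ell_1-2sc,s),(-c,1)$ satisfy
$$(\ell_1,s)+(-\ell_1-2sc,s)-2s\cdot(-c,1)=0,$$
so they are linearly dependent, contradicting that they span a three-dimensional cone.

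The main obstacle is the interplay between the smoothness of $\sigma_\alpha$ (the requirement that its primitive edges generate a direct summand of $X^\vee\oplus\Z$) and the parity constraint on $b$: it is precisely the evenness of $b(y,-)$ furnished by Proposition \ref{evenproposition} that forces the minors above (respectively the coefficient $-2s$ in the linear relation) to be divisible by $2$, which is what breaks the smoothness condition in the two- and three-dimensional cases.
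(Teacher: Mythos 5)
Your proof is correct, and its skeleton coincides with the paper's: argue by contradiction, observe that the equality of cones induces a permutation $\pi$ of the primitive edge generators with $s_i=s_{\pi(i)}$ and $\ell_i+\ell_{\pi(i)}=-s_ib(y,-)$, invoke Proposition \ref{evenproposition} to write $b(y,-)$ as twice a lattice vector, and then run a case analysis over the dimension of the cone and the cycle type of $\pi$. The one-dimensional case, the fixed-point analysis, and the $3$-cycle case are handled exactly as in the paper. Where you genuinely diverge is in the two remaining sub-cases. For the two-dimensional transposition case the paper uses the torsion-freeness of $(X^\vee\oplus\Z)/\langle(\ell_1,s),(\ell_2,s)\rangle$ to write $(sf,s)$ as an integral combination $\lambda_1(\ell_1,s)+\lambda_2(\ell_2,s)$ with $\lambda_1+\lambda_2=1$ and deduces $\ell_1=\ell_2$; you instead compute that every $2\times 2$ minor of the matrix of edge generators is divisible by $2s$, so the gcd-of-minors criterion for a saturated rank-$2$ sublattice fails --- both computations check out, and yours has the advantage of working uniformly for any rank of $X^\vee$ without introducing the auxiliary coefficients $\lambda_i$. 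For the three-dimensional transposition case the paper reduces to the two-dimensional case via the face spanned by the swapped edges, whereas you exhibit the explicit relation $(\ell_1,s)+(-\ell_1-2sc,s)-2s(-c,1)=0$, contradicting linear independence of the three edges directly; this is simpler and does not even need smoothness of that face. In short: same strategy, with cleaner and slightly more self-contained contradictions in the two hardest sub-cases.
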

\begin{proof}
Let $\alpha\in I$, and assume that $\sigma_{\alpha}=\mathbf{R}_{\geq 0}(\ell,s),$ where $\ell\in X^\vee,$ $s\in \N.$ We may assume that there does not exist any integer different from $\pm 1$ which divides both $\ell$ and $s$. Suppose there is $y\in Y$ such that $S_{(y, \mathrm{Id})}(\sigma_{\alpha})=S_{(0,[-1])}(\sigma_{\alpha}).$ Then $(\ell+sb(y,-),s)=(-\ell,s),$ so $2\ell=-sb(y,-).$ By Proposition \ref{evenproposition}, we can find $f\in X^\vee$ such that $b(y,-)=-2f.$ This implies that $\ell=sf,$ so $s=1.$

 Suppose now that $\sigma_{\alpha}$ is two-dimensional. We can find $\ell_1, \ell_2\in X^\vee$ such that $\sigma_{\alpha}=\mathbf{R}_{\geq 0}(\ell_1,s_1)+\mathbf{R}_{\geq 0}(\ell_2,s_2)$, where the $\ell_j\in X^\vee$ and $s_j\in \N$ have been chosen such that $(\ell_1,s_1), (\ell_2, s_2)$ generate a submodule of $X^\vee\oplus\Z$ of rank 2, and such that the quotient is torsion-free. Now suppose that $S_{(y, \mathrm{Id})}(\sigma_{\alpha})=S_{(0,[-1])}(\sigma_{\alpha})$ for some $y\in Y.$ This happens if and only if either
\begin{align}
\ell_j+s_jb(y,-)=-\ell_j \label{case1}
\end{align}
for $j=1,2,$ or
\begin{align}
\ell_j+s_jb(y,-)=-\ell_{3-j} \label{case2}
\end{align}
and $s_1=s_2$, for $j=1,2.$
For case (\ref{case1}), we may again choose $f\in X^\vee$ such that $b(y,-)=-2f$. We find, for $j=1,2,$ that $\ell_j=s_jf.$ But then the submodule of $X^\vee\oplus\Z$ generated by $(\ell_1, s_1)=s_1(f,1)$ and $(\ell_2, s_2)=s_2(f,1)$ has rank $1$, a contradiction. In case (\ref{case2}), we write $s$ for both $s_1$ and $s_2$ and find
$$sb(y,-)=-\ell_1-\ell_2.$$ Again, we pick $f\in X^\vee$ such that $b(y,-)=-2f.$ Then $\ell_1+\ell_2=2sf,$ so
$$(\ell_1,s)+(\ell_2,s)=2(sf,s).$$ Since the $\Z$-module $X^\vee\oplus\Z/\langle(\ell_1,s), (\ell_2,s)\rangle$ is torsion-free, this implies that there exist $\lambda_1, \lambda_2\in \Z$ such that
$$\lambda_1\ell_1+\lambda_2\ell_2=sf$$ and $\lambda_1+\lambda_2=1.$ This implies that
$$2\lambda_1\ell_1+2(1-\lambda_1)\ell_2=\ell_1+\ell_2,$$ and hence
$$(2\lambda_1-1)\ell_1+(1-2\lambda_1)\ell_2=0.$$ Since $2\lambda_1-1$ can never vanish, this implies that $\ell_1=\ell_2$, so that $(\ell_1,s_1)=(\ell_2,s_2),$ another contradiction.
Finally, assume that $\sigma_{\alpha}$ is three-dimensional, such that there are $\ell_1,\ell_2, \ell_3$ with the property that 
$$\sigma_\alpha=\mathbf{R}_{\geq 0}(\ell_1,s_1)+\mathbf{R}_{\geq 0}(\ell_2,s_2)+\mathbf{R}_{\geq 0}(\ell_3,s_3),$$ and such that $(\ell_1,s_1), (\ell_2,s_2), (\ell_3,s_3)$ form a basis of $X^\vee\oplus\Z.$ In order for $S_{(y, \mathrm{Id})}(\sigma_{\alpha})=S_{(0,[-1])}(\sigma_{\alpha})$ to be satisfied for some $y\in Y$, there must be a permutation $\gamma\in S_3$ such that
$$\ell_j+s_jb(y,-)=-\ell_{\gamma(j)}$$ and $s_j=s_{\gamma(j)}$ for $j=1,2,3.$ If $\gamma$ is the identity, we derive a contradiction as in case (\ref{case1}) above. If $\gamma$ swaps two of the indices, then the third must be kept fixed, and the other two generate a face of $\sigma_{\alpha}.$ In this case, we can derive a contradiction as in case (\ref{case2}) above. If $\gamma$ has no fixed points, we obtain $s_1=s_2=s_3$ and
$$\ell_1+\ell_2=\ell_1+\ell_3=\ell_2+\ell_3,$$ which implies $\ell_1=\ell_2=\ell_3.$ Hence the claim follows in general. 
\end{proof}\\
We are now in a position to prove the following result:
\begin{theorem}
Let $P$ be the regular projective model of $A$ from Theorem \ref{Pexistencetheorem}, and let $[-1]$ denote the action of $P$ which extends the action of $[-1]$ on the open subscheme $G$ of $P.$ Recall that $P$ contains the Néron model $\mathscr{A}$ of $A.$ Then the fixed locus of $[-1]$ on $P$ coincides with the closed subscheme $\mathscr{A}[2]\subseteq P.$ \label{fixedlocustheorem}
\end{theorem}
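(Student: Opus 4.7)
The plan is to verify the two set-theoretic inclusions separately and then upgrade the result to a scheme-theoretic equality using the group-scheme structure of $\mathscr{A}$. The inclusion $\mathscr{A}[2] \subseteq \mathrm{Fix}([-1])$ is immediate: by uniqueness of the extension, $[-1]$ restricts on the open subscheme $\mathscr{A}$ to the inversion of the N\'eron model, and inversion fixes $2$-torsion in any commutative group scheme.

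For the reverse inclusion, the strategy is to show that no fixed point can lie outside the smooth locus $P^{\mathrm{sm}} = \mathscr{A}$. I would exploit the stratification of $(P_k)_{\mathrm{red}}$ by $I^+ = (I\setminus\{0\})/Y$ given by Theorem \ref{Pexistencetheorem}(iii), on which $H = \{\mathrm{Id}, [-1]\}$ acts compatibly with its action on $I^+$. The semistability condition (b) forces every one-dimensional cone in the decomposition to have primitive element of the form $(\ell, 1)$, so Lemma \ref{conefreelemma} applies to every cone of dimension $\geq 2$ and asserts that $H$ acts freely on the corresponding subset of $I^+$. Since distinct strata are disjoint, no point of such a stratum can be fixed by $[-1]$. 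Hence every fixed point on the special fibre must lie in a stratum associated to a one-dimensional cone; the union of these strata is precisely the smooth part of $P_k$, which under the identification of Theorem \ref{Pexistencetheorem}(i) coincides with $\mathscr{A}_k$. Combined with the generic-fibre computation $\mathrm{Fix}([-1])\cap A = A[2] \subseteq \mathscr{A}_\eta$, this establishes $\mathrm{Fix}([-1]) \subseteq \mathscr{A}$ set-theoretically.

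To conclude, $\mathrm{Fix}([-1])$ is a closed subscheme of the separated scheme $P$ (being the preimage of the diagonal under $(\mathrm{Id},[-1])$), and the previous paragraph shows its support is contained in the open subscheme $\mathscr{A}$. Consequently $\mathrm{Fix}([-1])$ is itself a closed subscheme of $\mathscr{A}$ and agrees with the fixed locus of inversion computed inside $\mathscr{A}$; the latter is the equalizer of $\mathrm{Id}$ and $[-1]$ on a commutative group scheme, namely $\ker(\mathrm{Id}-[-1]) = \ker([2]) = \mathscr{A}[2]$.

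The real work lies in Lemma \ref{conefreelemma} and the parity statement of Proposition \ref{evenproposition} (which rests on Lemma \ref{squarelemma} via the hypothesis that $A[2]$ be $K$-rational); once those combinatorial inputs are available the theorem follows by the short geometric and group-theoretic argument outlined above, and I do not expect any further serious obstacle.
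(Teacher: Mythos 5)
Your proposal is correct and follows essentially the same route as the paper: the paper's proof likewise observes that any fixed point outside $\mathscr{A}[2]$ would have to lie on the special fibre, invokes Lemma \ref{conefreelemma} to rule out strata attached to cones of dimension $\geq 2$, and notes that the strata of one-dimensional cones lie in $P^{\mathrm{sm}}=\mathscr{A}$, where the fixed locus of inversion is $\mathscr{A}[2]$. Your additional paragraph upgrading the set-theoretic containment to a scheme-theoretic identification via the equalizer and $\ker([2])$ is a harmless (and welcome) elaboration of a point the paper leaves implicit.
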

\begin{proof}
If there were any fixed points of $[-1]$ not contained in $\mathscr{A}[2]$, they would have to lie on the special fibre of $P$. We know that the special fibre of $P$ has a stratification indexed by $I^+$ as described in Theorem \ref{Pexistencetheorem}. By Lemma \ref{conefreelemma}, we know that the fixed points cannot lie on strata associated with two-dimensional or three-dimensional cones. However, we also know from Theorem \ref{Pexistencetheorem} that the strata associated with one-dimensional cones are contained in $P^{\mathrm{sm}},$ which is precisely the image of the open immersion $\mathscr{A}\to P.$ Hence the fixed points are contained in $\mathscr{A}$, and hence in $\mathscr{A}[2].$
\end{proof}\\
We are now in a position to prove the main result of the first part. We need the following lemma about quotients by finite groups:
\begin{lemma}
Let $U$ be a scheme which is flat and quasi-projective over $\Og_K.$ Suppose that the finite group $H$ acts on $U$ (respecting the $\Og_K$-structure). Assume further that $\#H$ is invertible in $\Og_K$. Then taking the quotient of $U$ by $H$ commutes with base change along the morphism $\Spec k\to \Spec \Og_K.$ \label{quotientlemma}
\end{lemma}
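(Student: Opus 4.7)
The plan is to reduce the statement to the affine case using quasi-projectivity and then to apply the standard fact that taking $H$-invariants is exact, and hence commutes with any tensor product, once a Reynolds projector is available.

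First, I would use that since $U$ is quasi-projective over $\Og_K$ and $H$ is finite, every $H$-orbit in $U$ is contained in an affine open subset (every finite set of points of a quasi-projective scheme is contained in an affine open). Replacing such an affine open by the intersection of its $H$-translates produces an $H$-stable affine open cover of $U$. The categorical quotient $U/H$ exists as a scheme, and is obtained by gluing the affine quotients $\Spec(B^H)$ associated with $H$-stable affine opens $\Spec B \subseteq U$. Since pull-back along $\Spec k \to \Spec \Og_K$ is local and preserves open immersions, it suffices to prove the lemma after restricting to a single $H$-stable affine open. So we may assume $U = \Spec B$.

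In this case, $U/H = \Spec B^H$, and the lemma reduces to the purely algebraic statement
\begin{equation*}
B^H \otimes_{\Og_K} k \;\xrightarrow{\,\sim\,}\; (B \otimes_{\Og_K} k)^H.
\end{equation*}
Because $\#H \in \Og_K^{\times}$, the Reynolds projector $e \colon B \to B$, $b \mapsto \frac{1}{\#H}\sum_{h \in H} h\cdot b$, is a well-defined $\Og_K$-linear idempotent endomorphism with image exactly $B^H$. This yields an $\Og_K$-module decomposition $B = B^H \oplus \ker e$, exhibiting $B^H$ as a direct summand of $B$. Tensoring over $\Og_K$ with $k$ preserves this splitting, so $B^H \otimes_{\Og_K} k$ injects into $B \otimes_{\Og_K} k$ and coincides with the image of $e \otimes \mathrm{id}$. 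Since $\#H$ also maps to a unit in $k$, the operator $e \otimes \mathrm{id}$ is precisely the Reynolds projector for the induced $H$-action on $B \otimes_{\Og_K} k$, and its image equals $(B \otimes_{\Og_K} k)^H$. Combining these two identifications yields the claim.

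The only non-formal step is the reduction to an $H$-stable affine cover, which uses quasi-projectivity; everything after that is a formal consequence of the existence of the Reynolds projector, which in turn just uses the hypothesis $\#H \in \Og_K^{\times}$. (In the application of the lemma, $H = \{\mathrm{Id},[-1]\}$ has order $2$, which is invertible in $\Og_K$ by the standing assumption $p \neq 2$.)
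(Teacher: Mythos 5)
Your proof is correct. The reduction to an $H$-stable affine cover is the same as in the paper (the paper simply cites this as well known, while you justify it via quasi-projectivity), but your treatment of the algebraic core $B^H\otimes_{\Og_K}k\to(B\otimes_{\Og_K}k)^H$ differs: the paper proves injectivity by hand using flatness of $B$ over $\Og_K$ (if $b=\pi_K b'$ with $b\in B^H$, then $\pi_K$-torsion-freeness forces $b'\in B^H$) and surjectivity separately by averaging, whereas you get both at once from the Reynolds projector $e=\frac{1}{\#H}\sum_{h}h$, which exhibits $B^H$ as an $\Og_K$-module direct summand of $B$ compatible with $-\otimes_{\Og_K}k$. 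Your route is cleaner and slightly more general: it shows the flatness hypothesis is not actually needed for this step (it is only used in the paper's injectivity argument), and it makes clear that the statement holds for base change along any ring map $\Og_K\to R$, not just $\Og_K\to k$. The only point worth spelling out, which you state correctly but briefly, is that the image of $e\otimes\mathrm{id}$ equals $(B\otimes_{\Og_K}k)^H$ because $e\otimes\mathrm{id}$ is again the averaging operator and $\#H$ remains a unit in $k$.
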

\begin{proof}
It is well-known that $U$ can be covered by finitely many $H$-stable affine open subschemes $U_1,..., U_m$, and that the quotient $U/H$ is given by gluing the schemes $U_j/H$. Hence we may assume, without loss of generality, that $U$ is affine, and equal to $\Spec B$, say. In this case, all we have to prove is that the canonical morphism 
$$B^H\otimes_{\Og_K} k \to (B\otimes_{\Og_K} k)^H$$ is an isomorphism. To see that this morphism is injective, suppose $b\in B^H$ such that $b\otimes 1$ is equal to zero in $B\otimes_{\Og_K} k$. This means that $b=\pi_K b'$ for some $b'\in B$. Let $h\in H$. Then $\pi_K(b'-h(b'))=0$, so $b'=h(b')$ since $B$ is flat over $\Og_K$, and has therefore no $\pi_K$-torsion. Since $h$ was chosen arbitrarily, this implies that $b'\in B^H$, so $b\otimes1=0$ in $B^H\otimes_{\Og_K}k$. That proves injectivity. To prove surjectivity, let $b\in B$ such that $b\otimes 1$ is $H$-invariant. For each $h\in H$, we have $b\equiv h(b) \mod \pi_K$, so
$$\sum_{h\in H} h(b)\equiv \#H\cdot b\mod\pi_K.$$ Since $\#H\in \Og_K^\times$, this can be re-written as
$$\Bigg((\#H)^{-1}\sum_{h\in H} h(b)\Bigg)\otimes 1=b\otimes 1$$ in $B\otimes_{\Og_K}k.$ Because $(\#H)^{-1}\sum_{h\in H} h(b)$ is visibly $H$-invariant, the claim follows.
\end{proof}
\begin{lemma}
Let $X$ be a two-dimensional smooth (but not necessarily connected) algebraic variety over a field of characteristic different from 2. Let $\iota$ be an involution of $X$ whose scheme-theoretic fixed locus consists of finitely many geometrically reduced points, and such that if $x\in X$ is a fixed point, the action of $\mathrm{d}\iota_x$ on the Zariski tangent space $T_x X$ is given by multiplication by $-1.$ If $Y$ denotes the fixed locus of $\iota$, then the quotient $(\mathrm{Bl}_YX)/\iota$ is smooth over the ground field. \label{quotientsmoothlemma}
\end{lemma}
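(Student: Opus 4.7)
The plan is to verify smoothness \'etale-locally on the quotient $Q := (\mathrm{Bl}_YX)/\iota$. Write $\pi\colon\widetilde{X}:=\mathrm{Bl}_YX\to X$ for the blowup and $\tilde\iota$ for the unique lift of $\iota$. I would handle separately points lying over $X\setminus Y$ and points lying over $Y$.

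Over $X\setminus Y$, the morphism $\pi$ is an isomorphism, and the hypothesis that $Y$ is the full fixed locus of $\iota$ means that $\tilde\iota$ acts freely on $\pi^{-1}(X\setminus Y)$. A free action of a finite group of order invertible in the ground field on a smooth scheme yields a smooth quotient (the quotient map being \'etale), so $Q$ is smooth over the image of this open subset.

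At each fixed point $x\in Y$, the strategy is to diagonalize the $\iota$-action at $x$ and reduce to an explicit local model. Picking any regular system of parameters $u_0,v_0\in\mathfrak{m}_x$ of $\mathcal{O}_{X,x}$, and using that $\mathrm{char}\,k\neq 2$ together with $\mathrm{d}\iota_x=-\mathrm{Id}$, the elements $u:=(u_0-\iota(u_0))/2$ and $v:=(v_0-\iota(v_0))/2$ are $\iota$-anti-invariant, agree with $u_0,v_0$ modulo $\mathfrak{m}_x^2$, and hence remain a regular system of parameters by Nakayama. After replacing $X$ by an $\iota$-stable affine neighborhood $U$ of $x$ on which $V(u,v)=\{x\}$ scheme-theoretically, the morphism $(u,v)\colon U\to\mathbf{A}^2_k$ becomes \'etale at $x$ (hence \'etale on a further shrinking) and is $\iota$-equivariant with respect to the linear involution $(u,v)\mapsto(-u,-v)$ on $\mathbf{A}^2_k$. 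Since blow-up commutes with flat base change and $(u,v)^{-1}(0)=\{x\}$, the restriction of $\mathrm{Bl}_YX$ to $U$ agrees, as an $\iota$-scheme, with the pullback of $\mathrm{Bl}_0\mathbf{A}^2_k$ along $(u,v)$; moreover, since $|\iota|=2$ is invertible in $k$, quotient formation commutes with \'etale base change. Smoothness of $Q$ in a neighborhood of $x$ is therefore reduced to smoothness of $(\mathrm{Bl}_0\mathbf{A}^2_k)/\iota$.

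It thus remains to treat the model case $X=\mathbf{A}^2_k$, $Y=\{0\}$, $\iota(u,v)=(-u,-v)$. The blow-up $\mathrm{Bl}_0\mathbf{A}^2_k$ is covered by two affine charts $\Spec k[u,w]$ and $\Spec k[v,t]$ with $w=v/u$ and $t=u/v$, on which $\tilde\iota$ acts by $(u,w)\mapsto(-u,w)$ and $(v,t)\mapsto(-v,t)$ respectively. The corresponding $\tilde\iota$-invariant subrings are $k[u^2,w]$ and $k[v^2,t]$, both polynomial rings in two variables over $k$, and the quotient is smooth in each chart, hence globally. The main obstacle in this plan is the descent step described above: one must verify precisely that the formation of the blow-up and of the quotient are both compatible with the \'etale localization given by $(u,v)$. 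Once this compatibility is set up correctly, the rest of the argument is the elementary toric calculation in the model case.
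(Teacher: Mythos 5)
Your argument is correct, but it takes a different route from the paper. The paper first reduces to an algebraically closed ground field (using that blow-ups and quotients commute with flat base change and that smoothness is fpqc-local on the target) and then simply cites the first part of B\u adescu's proof of smoothness of Kummer surfaces (\cite{B}, Ch.~10, Thm.~10.6), observing that the only properties of $[-1]$ used there are the ones listed in the Lemma. You instead give a self-contained \'etale-local computation: anti-symmetrize a regular system of parameters to produce an $\iota$-equivariant \'etale map to $(\mathbf{A}^2_k,\pm1)$, pull back $\mathrm{Bl}_0\mathbf{A}^2_k$, and compute $k[u,w]^{\iota}=k[u^2,w]$ in each chart. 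This buys independence from the reference and works directly over a non-algebraically-closed base, at the cost of the descent step you flag. That step does need one more sentence than you give it: ``quotient formation commutes with \'etale base change'' is not true for an arbitrary equivariant \'etale map --- one must check that the map preserves stabilizers, i.e.\ that the scheme-theoretic fixed locus upstairs is exactly the preimage of the fixed locus downstairs; only then is $(\mathrm{Bl}_xU)/\iota\to(\mathrm{Bl}_0\mathbf{A}^2_k)/\iota$ \'etale (equivalently, one identifies the completed local ring of the quotient with the invariants of the completed local ring upstairs, using that $2$ is invertible). In your situation this holds: after shrinking $U$ so that $Y\cap U=\{x\}$, the fixed locus of $\tilde\iota$ on $\mathrm{Bl}_xU$ is precisely the exceptional divisor (on which $\tilde\iota$ acts as $\mathbf{P}(-\mathrm{Id})=\mathrm{Id}$), which is precisely the preimage of the exceptional line, the fixed locus of $(u,w)\mapsto(-u,w)$ on $\mathrm{Bl}_0\mathbf{A}^2_k$. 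With that verification inserted, the proof is complete; the remaining ingredients (anti-invariant parameters via Nakayama, miracle flatness for \'etaleness of $(u,v)$, the chart computation, and \'etaleness of free quotients away from $Y$) are all correct.
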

\begin{proof}
Since taking geometric quotients and blow-ups commutes with flat base change, and since smoothness is local in the fpqc-topology on the target, we may assume without loss of generality that the ground field is algebraically closed. In this situation, the first part of the proof of \cite{B}, Chapter 10, Theorem 10.6 can be taken \it mutatis mutandis \rm to show that $(\mathrm{Bl}_YX)/\iota$ is regular, and hence smooth over the ground field. In fact, only the case where $X$ is an Abelian variety and $\iota=[-1]$ is considered in \it loc. cit. \rm However, the only facts used about the behaviour of the involution are those listed in the Lemma, so the claim follows.
\end{proof}\\
We need one more Lemma:
\begin{lemma}
Let $P$ be the projective model of $A$ from Theorem \ref{Pexistencetheorem}, and let $P'$ be an open subscheme of $P$ containing $\mathscr{A}[2].$ Then the special fibre of $\mathrm{Bl}_{\mathscr{A}[2]}P'$ is $\mathrm{Bl}_{\mathscr{A}_k[2]}P'_k.$ In other words, the blow-up of $P'$ in $\mathscr{A}[2]$ commutes with base change along the morphism $\Spec k\to \Spec \Og_K.$ \label{basechangelemma}
\end{lemma}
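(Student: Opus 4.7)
The plan is to show that locally around each closed point of $\mathscr{A}[2]$ the ideal sheaf of $\mathscr{A}[2]$ in $P'$ is generated by a regular sequence whose reduction modulo $\mathfrak{m}_K$ remains a regular sequence, and then to conclude that the formation of the Rees algebra, and hence of the blow-up, commutes with base change to $k$.

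First I would record that $\mathscr{A}[2]\to\Spec\Og_K$ is finite \'etale; this uses that $A[2]$ is constant and that $\mathrm{char}\,k\neq 2$. Since $\mathscr{A}[2]\subseteq P^{\mathrm{sm}}=\mathscr{A}$ and $\mathscr{A}$ is smooth of relative dimension $2$ over $\Og_K$, the closed immersion $\mathscr{A}[2]\hookrightarrow P'$ is a regular immersion of codimension $2$. The same argument applied over $k$ shows that $\mathscr{A}_k[2]\hookrightarrow P'_k$ is also a regular immersion of codimension $2$. Consequently, at each closed point $p$ of $\mathscr{A}[2]$ one may choose local generators $f_1,f_2$ of the ideal sheaf $\mathcal{I}$ of $\mathscr{A}[2]$ in $P'$ such that $(f_1,f_2)$ is a regular sequence in $\Og_{P',p}$ whose reductions $(\bar f_1,\bar f_2)$ form a regular sequence in $\Og_{P'_k,p}$.

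The central calculation would then be to verify that each power $\mathcal{I}^n$ is $\Og_K$-flat and that the natural map $\mathcal{I}^n\otimes_{\Og_K}k\to\mathcal{I}_k^n$ is an isomorphism, where $\mathcal{I}_k$ denotes the ideal sheaf of $\mathscr{A}_k[2]$ in $P'_k$. For $n=1$ both statements follow from the flatness of $\Og_{\mathscr{A}[2]}$ over $\Og_K$ applied to $0\to\mathcal{I}\to\Og_{P'}\to\Og_{\mathscr{A}[2]}\to 0$. For higher $n$, the regularity of $(f_1,f_2)$ yields that the conormal sheaf $\mathcal{I}/\mathcal{I}^2$ is locally free on $\mathscr{A}[2]$ and that the symmetric-algebra map $\mathrm{Sym}^{\bullet}(\mathcal{I}/\mathcal{I}^2)\to\bigoplus_{n}\mathcal{I}^n/\mathcal{I}^{n+1}$ is an isomorphism; both features survive base change to $k$ thanks to the regularity of $(\bar f_1,\bar f_2)$. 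A short induction on $n$ via the sequences $0\to\mathcal{I}^{n+1}\to\mathcal{I}^n\to\mathcal{I}^n/\mathcal{I}^{n+1}\to 0$ then delivers the desired flatness and base-change identifications.

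Once this is in hand, the Rees algebra $\bigoplus_n\mathcal{I}^n$ has formation commuting with $\Spec k\to\Spec\Og_K$, and applying $\Proj$ (which commutes with base change for finitely generated graded algebras) yields the required isomorphism $(\mathrm{Bl}_{\mathscr{A}[2]}P')_k\cong\mathrm{Bl}_{\mathscr{A}_k[2]}P'_k$. The main obstacle is precisely the base-change compatibility of the powers $\mathcal{I}^n$, and this rests essentially on the joint input that $\mathscr{A}[2]$ is \'etale over $\Og_K$ and sits inside the smooth locus of $P$.
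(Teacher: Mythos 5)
Your proposal is correct and follows essentially the same route as the paper: both arguments reduce to showing that $\mathscr{A}[2]$ is cut out locally by a regular sequence of length $2$, deduce via the local freeness of $\mathcal{I}^n/\mathcal{I}^{n+1}$ over the ($\Og_K$-flat) structure sheaf of $\mathscr{A}[2]$ and an induction on $n$ that the powers of the ideal behave well under reduction modulo $\mathfrak{m}_K$, and then conclude by commuting the Rees algebra and $\Proj$ with base change. The only difference is cosmetic: you run the induction on $0\to\mathcal{I}^{n+1}\to\mathcal{I}^n\to\mathcal{I}^n/\mathcal{I}^{n+1}\to 0$ to get flatness of $\mathcal{I}^n$, while the paper uses the quotients $R/I^n$; and you spell out why the immersion is regular (étaleness of $\mathscr{A}[2]$ inside the smooth locus), which the paper leaves implicit.
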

\begin{proof}
We can cover $P'$ by open affine subschemes such that the intersection of $\mathscr{A}[2]$ with any one of our open affine subschemes is given by an ideal generated by a regular sequence of length 2. Pick such an affine open subscheme, equal to $\Spec R,$ say. Suppose $\mathscr{A}[2]\cap \Spec R$ is cut out by the ideal $I\subseteq R.$ Then the $R/I$-module $I^n/I^{n+1}$ is free for all $n\in \N,$ so it is in particular flat as an $\Og_K$-module. By induction, we deduce that, for all $n\in \N,$ $R/I^n$ is flat as an $\Og_K$-module. This, in turn, implies that the induced map $I^n\otimes_{\Og_K}k\to R\otimes_{\Og_K}k$ is injective for all $n$, and that its image is the $n$-th power of the image of $I\otimes_{\Og_K}k\to R\otimes_{\Og_K}k.$ However, this last image is precisely the ideal defining $(\mathscr{A}[2]\cap \Spec R)_k:=(\mathscr{A}[2]\cap \Spec R)\times_{\Og_K}\Spec k \subseteq \Spec R\otimes_{\Og_K}k.$ Hence we find
$$ \mathrm{Bl}_{(\mathscr{A}[2]\cap \Spec R)_k}(\Spec R\times_{\Og_K}\Spec k)=\Proj \bigoplus_{n\geq 0} (I\otimes_{\Og_K}k)^n=\Big( \Proj \bigoplus_{n\geq 0} I^n \Big) \times_{\Og_K} \Spec k,$$
where the right hand side clearly equals the special fibre of $\mathrm{Bl}_{\mathscr{A}[2]\cap \Spec R}\Spec R.$ Our claim now follows from a gluing argument.
\end{proof}
\begin{proposition}
Let $U$ be the blow-up of the Néron model $\mathscr{A}$ of $A$ in the closed subscheme $\mathscr{A}[2].$ Then the action of $[-1]$ extends to $U$, and the quotient
$U/[-1]$ is smooth over $\Og_K$. \label{quotientsmoothlemmaii}
\end{proposition}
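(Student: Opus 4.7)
The plan is to construct the involution on $U$ via the universal property of the blow-up, realize $V := U/[-1]$ as a scheme that is flat over $\Og_K$ (using $2 \in \Og_K^\times$), and then verify smoothness of $V$ fibre-wise via the smoothness criterion.

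First I would note that the automorphism $[-1]$ of $\mathscr{A}$ obtained by restricting the action on $P$ stabilizes the closed subscheme $\mathscr{A}[2]$ (it acts trivially on it), so by the universal property of blow-ups it lifts uniquely to an involution of $U$. Since $U$ is quasi-projective over $\Og_K$ (as $\mathscr{A}$ is open in the projective $\Og_K$-scheme $P$) and $2 \in \Og_K^\times$, $U$ admits a cover by $H$-stable affine open subschemes, and the geometric quotient $V := U/H$ exists as a quasi-projective $\Og_K$-scheme. Locally $\Og_V = \Og_U^H$, and since $\#H$ is a unit, the averaging map $\tfrac{1}{2}(\mathrm{Id} + [-1])$ exhibits $\Og_U^H$ as a direct summand of $\Og_U$ as an $\Og_K$-module; because $U$ is smooth (being the blow-up of the smooth $\Og_K$-scheme $\mathscr{A}$ along the smooth regularly-immersed subscheme $\mathscr{A}[2]$, which is étale over $\Og_K$ of codimension $2$ in $\mathscr{A}$ since $p \neq 2$ and $\mathscr{A}^0$ has semiabelian reduction), $\Og_U$ is $\Og_K$-flat, and hence so is $\Og_V$. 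Therefore $V$ is flat over $\Og_K$, and by the fibral smoothness criterion it suffices to check that $V_\eta/K$ and $V_k/k$ are smooth.

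For the generic fibre, $U_\eta = \mathrm{Bl}_{A[2]}A$, and by Lemma \ref{quotientlemma} $V_\eta = U_\eta/[-1]$ — this is the classical Kummer construction. Lemma \ref{quotientsmoothlemma} applies to $(A, [-1])$ since $A[2]$ is étale over $K$ (hence geometrically reduced), and the differential of $[-1]$ at any $t \in A[2]$ equals $-\mathrm{Id}$: indeed, since $2t = 0$, we have $T_{-t} \circ [-1] \circ T_t = [-1]$ on $A$, so the differential at $t$ agrees with that at $0$, which is $-\mathrm{Id}$ on $\Lie(A)$. Thus $V_\eta$ is smooth over $K$ (and is, of course, the Kummer surface $X$). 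For the special fibre, Lemma \ref{basechangelemma} applied to the open subscheme $P' = \mathscr{A} \subseteq P$ gives $U_k = \mathrm{Bl}_{\mathscr{A}_k[2]}\mathscr{A}_k$, and Lemma \ref{quotientlemma} gives $V_k = U_k/[-1]$. The scheme-theoretic fixed locus of $[-1]$ on the smooth $k$-group scheme $\mathscr{A}_k$ is the equalizer of $[-1]$ and $\mathrm{Id}$, namely $\Ker[-2] = \mathscr{A}_k[2]$; since $\mathscr{A}^0$ has semiabelian reduction, $p \neq 2$, and the component group is étale, multiplication by $2$ on $\mathscr{A}$ is étale, so $\mathscr{A}_k[2]$ is a finite disjoint union of reduced $k$-points. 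The same translation argument gives differential $-\mathrm{Id}$ at each fixed point. Lemma \ref{quotientsmoothlemma} now yields smoothness of $V_k$ over $k$, completing the argument.

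The main obstacle is pinning down the scheme-theoretic fixed locus of $[-1]$ on $\mathscr{A}_k$ and confirming it is étale (rather than merely supported on reduced points). This is precisely where the hypothesis $p \neq 2$ together with the semiabelian reduction of $A$ becomes essential: without étaleness of $\mathscr{A}_k[2]$, the quotient would in general be only normal on the special fibre, and the conclusion of Lemma \ref{quotientsmoothlemma} would fail.
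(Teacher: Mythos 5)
Your proposal is correct and follows essentially the same route as the paper: reduce to fibrewise smoothness via flatness, identify the two fibres of the quotient using Lemmas \ref{quotientlemma} and \ref{basechangelemma}, and apply Lemma \ref{quotientsmoothlemma} to each fibre. You simply fill in details the paper leaves to ``general theory of group schemes'' (the averaging argument for flatness, the identification of the scheme-theoretic fixed locus with $\mathscr{A}_k[2]$, and the translation argument showing the differential at each fixed point is $-\mathrm{Id}$), all of which are accurate.
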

\begin{proof}
The scheme $U/[-1]$ is certainly flat and of finite type over $\Og_K$, so it suffices to prove that the generic fibre and the special fibre of $U/[-1]$ are smooth over $K$ and $k$, respectively. This follows if we can show that both fibres of $U/[-1]\to \Spec\Og_K$ are smooth. By Lemma \ref{quotientlemma}, we know that the special fibre of $U/[-1]$ is the quotient of the $k$-scheme $\mathrm{Bl}_{\mathscr{A}_{k}[2]} \mathscr{A}_{k}$ by $[-1]$ (here we use that the special fibre of $U$ is the blow-up $\mathrm{Bl}_{\mathscr{A}_{k}[2]} \mathscr{A}_{k},$ which follows from Lemma \ref{basechangelemma}).  We also know that the generic fibre of $U$ is the quotient of $\mathrm{Bl}_{A[2]} A$ by $[-1],$ since geometric quotients and blow-ups commute with flat base change. It follows from general theory of group schemes that the conditions of Lemma \ref{quotientsmoothlemma} are satisfied for both the generic and the special fibre of $U$. The claim now follows from Lemma \ref{quotientsmoothlemma}.
\end{proof}\\
We can now prove
\begin{theorem} Let $A$ be an Abelian surface over $K$. Let $X$ be the Kummer surface associated with $A$. Then, after passing to a finite extension of $K$ if necessary, there exists a strict Kulikov model $\mathscr{X}\to \Spec\Og_K$ of $X$. Moreover, $\mathscr{X}$ is a scheme. \label{Kummersemistabletheorem}
\end{theorem}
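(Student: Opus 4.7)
My plan is to construct $\mathscr{X}$ by blowing up the fixed locus of $[-1]$ on $P$ and then passing to the quotient. After a finite base change we may assume, as in the discussion preceding the statement, that $A$ has semiabelian reduction, that $A[2]$ is constant, and that we have selected $(G,\mathscr{L},\mathscr{M})\in \mathrm{DEG}^{\mathrm{split}}_{\mathrm{ample}}$ with $G=\mathscr{A}^0$ and a smooth $\Gamma$-admissible rational polyhedral cone decomposition satisfying properties (a)--(d) of Theorem \ref{Pexistencetheorem}. This yields the regular projective $\Og_K$-model $P$ of $A$ carrying the action of $H=\{\mathrm{Id},[-1]\}$; by Corollary \ref{Pkulikovcorollary} $P$ is itself a strict Kulikov model of $A$, and by Theorem \ref{fixedlocustheorem} the fixed locus of $[-1]$ on $P$ is precisely the regular codimension-two subscheme $\mathscr{A}[2]\subseteq P$.

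Let $\pi\colon P'\to P$ be the blow-up of $P$ along $\mathscr{A}[2]$. Since $\mathscr{A}[2]$ is a regular closed subscheme of the regular scheme $P$, the scheme $P'$ is again regular; the involution extends uniquely to $P'$, and since the center was exactly its fixed locus, $[-1]$ acts freely on the complement of the exceptional divisor $E\subseteq P'$. I define $\mathscr{X}:=P'/[-1]$, which exists as a projective $\Og_K$-scheme because $P'$ is projective and $\#H=2$ is invertible in $\Og_K$; its generic fibre is the classical Kummer construction $\mathrm{Bl}_{A[2]}(A)/[-1]=X$. To verify that $\mathscr{X}$ is regular I split into two cases: on the open locus where $[-1]$ acts freely the quotient map is étale, so regularity descends from $P'$; near $E$ I use that $\pi^{-1}(\mathscr{A})=\mathrm{Bl}_{\mathscr{A}[2]}\mathscr{A}$ (since $\mathscr{A}\subseteq P$ is open and contains $\mathscr{A}[2]$), so the open subscheme $U:=(\mathrm{Bl}_{\mathscr{A}[2]}\mathscr{A})/[-1]\subseteq \mathscr{X}$ containing $E/[-1]$ is smooth over $\Og_K$ by Proposition \ref{quotientsmoothlemmaii}. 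For strict semistability: the points of $\mathscr{A}_k[2]$ lie in the smooth locus of the SNC divisor $P_k$, so blowing them up preserves SNC and yields smooth components in $P'_k$; the local toric analysis $(u,v)\mapsto(-u,v)$ then shows that in the quotient the special fibre near the image of $E$ is a single smooth divisor, while elsewhere the SNC structure is transported through an étale quotient.

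To conclude I would apply Proposition \ref{codim2proposition} to $\mathscr{X}$ with the open subscheme $U$ constructed above. Its complement $\mathscr{X}\setminus U$ is the image of $P\setminus\mathscr{A}$, which has codimension $\geq 2$ in $P$ since it is a union of strata associated with two- and three-dimensional cones, hence also codimension $\geq 2$ in $\mathscr{X}$. A nowhere vanishing relative $2$-form on $\mathscr{A}$ exists by \cite{BLR}, Chapter 4.2, Corollary 3, and it is $[-1]$-invariant because $[-1]$ acts trivially on $\bigwedge^{2}$ of a two-dimensional vector space; pulling back to $\mathrm{Bl}_{\mathscr{A}[2]}\mathscr{A}$ gives a section of $\omega_{\mathrm{Bl}_{\mathscr{A}[2]}\mathscr{A}/\Og_K}$ with a simple zero along the exceptional divisor, and the local identity $\pi^{\ast}(dx\wedge dy)=u\,du\wedge dv=\tfrac{1}{2}\,d(u^{2})\wedge dv$ shows that it descends to a nowhere vanishing relative $2$-form on $U$. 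The main technical obstacle is the combined smoothness and SNC verification near the exceptional divisor, which requires the explicit toric model above to be carried out carefully; once this is in place, Proposition \ref{codim2proposition} assembles the Kulikov property, and $\mathscr{X}$ is a scheme by construction.
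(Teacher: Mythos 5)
Your construction is the same as the paper's: blow up $P$ along $\mathscr{A}[2]$, quotient by $H=\{\mathrm{Id},[-1]\}$, cover the quotient by $U_0/H=(\mathrm{Bl}_{\mathscr{A}[2]}\mathscr{A})/H$ (smooth by Proposition \ref{quotientsmoothlemmaii}) and $U_1/H$ with $U_1=P\setminus\mathscr{A}[2]$ (where the action is free by Theorem \ref{fixedlocustheorem}), and finish with Proposition \ref{codim2proposition} via the nowhere vanishing $2$-form and the codimension-$\geq 2$ complement. All of that is fine.

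There is, however, one genuine gap: your claim that away from the exceptional divisor ``the SNC structure is transported through an \'etale quotient.'' An \'etale (free) quotient of a reduced strict normal crossings divisor is only a normal crossings divisor; it need not be \emph{strict}, and its irreducible components need not be smooth. Concretely, if $[-1]$ swaps two irreducible components $D_1, D_2$ of $U_{1,k}$ that meet along a curve, their image in $U_{1,k}/H$ is a single irreducible component which \'etale-locally near the image of $D_1\cap D_2$ still looks like two crossing branches, hence is not smooth over $k$. This would violate condition (ii') of Definition \ref{Kulikovdefinition}, so you would only obtain a Kulikov model, not a strict one. The paper excludes this scenario by a second application of Lemma \ref{conefreelemma}: two intersecting components swapped by $[-1]$ correspond to rays $\mathbf{R}_{\geq 0}(\ell,1)$ and $\mathbf{R}_{\geq 0}(-\ell+b(y,-),1)$ spanning a two-dimensional cone $\tau$ of the decomposition with $S_{(-y,\mathrm{Id})}(\tau)=S_{(0,[-1])}(\tau)$, which Lemma \ref{conefreelemma} forbids (and this ultimately rests on the evenness of $b$, Proposition \ref{evenproposition}, hence on $A[2]$ being constant). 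You need to supply this combinatorial step. Two smaller omissions: reducedness of $\mathscr{X}_k$ should be checked (it follows from Lemma \ref{quotientlemma} since formation of the quotient commutes with reduction mod $\mathfrak{m}_K$), and the good-reduction case, where no nontrivial cone decomposition is available, should be treated separately by taking $P=\mathscr{A}$.
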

\begin{proof}
Without loss of generality, we may assume that $A$ has semiabelian reduction already over $K$, and that $A[2]$ is constant over $K$. Assume for the moment that the identity component $\mathscr{A}_k^0$ of the special fibre of the Néron model of $A$ is either isomorphic to $\mathbf{G}_{\mathrm{m}, k}^2$ or an extension of an elliptic curve by $\Gm,$ so that our previous results become applicable. We know from Theorem \ref{Pexistencetheorem} and Proposition \ref{semconeexistenceproposition} that, after passing to a finite extension of $K$ if necessary, there exists a projective regular model $P$ of $A$ which contains the Néron model $\mathscr{A}$ of $A$ and such that the action of $H$ on $G=\mathscr{A}^0$ extends to $P$. Define
$$\tilde{\mathscr{X}}:=\mathrm{Bl}_{\mathscr{A}[2]} P.$$ Then $H$ acts on $\tilde{\mathscr{X}}$ and, in fact, $\tilde{\mathscr{X}}$ admits an open covering of $H$-stable open subsets, one given by $U_0:=\mathrm{Bl}_{\mathscr{A}[2]}\mathscr{A},$ and the other one given by $U_1:=P\backslash \mathscr{A}[2].$ Let us define
$$\mathscr{X}:=\tilde{\mathscr{X}}/H.$$ We shall now show that $\mathscr{X}$ has all the desired properties. First of all, it is indeed a model of $X$, because geometric quotients commute with flat base change. We also note that $\tilde{\mathscr{X}}/H$ is a scheme since $\tilde{\mathscr{X}}$ is projective over $\Og_K$. Now observe that $\mathscr{X}$ admits an open cover by the subschemes $U_0/H$ and $U_1/H$. It follows from Lemma \ref{quotientsmoothlemmaii} that $U_0/H$ is smooth over $\Og_K$. On the other hand, the action of $H$ on $U_1$ is free by Theorem \ref{fixedlocustheorem}. In particular, the morphism $U_1\to U_1/H$ is étale, which implies that the scheme $\mathscr{X}$ is regular (since regularity is local in the étale topology and $U_0/H$, $U_1$ are regular) and that the special fibre of $\mathscr{X}$ is a divisor with normal crossings. In the case where $\mathscr{A}_k^0$ is an Abelian variety, $A$ has good reduction. In other words, the Néron model $\mathscr{A}$ of $A$ is projective over $\Og_K.$ Define $P:=\mathscr{A}.$ Then we obtain an action of $H$ on $P$ as above, and the quotient $\mathscr{X}$ of $\tilde{\mathscr{X}}:=\mathrm{Bl}_{\mathscr{A}[2]}P$ by the action induced by $[-1]$ is smooth over $K$, so it is in particular semistable. This last case has already been observed by Ito \cite{Ma}, Lemma 4.2. It follows from \cite{S}, Lemma 3.2, that the morphism $\mathscr{X}\to \Spec\Og_K$ is indeed projective. Now we must prove that the irreducible components of the special fibre of $\mathscr{X}\to \Spec\Og_K$ are smooth over $k$. Note that any singular points of the $\mathscr{X}_k$ must be contained in $U_1/H$ because $U_0/H$ is smooth over $\Og_K$. Also observe that the special fibre $U_{1,k}$ of $U_1$ still has a stratification indexed by $I^+/Y$ which is preserved by the action of $H$. Here, as before, $I$ denotes the index set of the semistable rational polyhedral cone decomposition $\{\sigma_\alpha\}_{\alpha\in I}$ which we used to construct $P$, and $I^+$ denotes the set of indices belonging to non-zero cones. Observe also that the pre-image of a non-regular irreducible component of the special fibre of $U_1/H$ in $U_{1,k}$ must be the union of two irreducible components of $U_{1,k}$ which intersect non-trivially. This can be seen as follows: The pre-image cannot consist of more than two irreducible components because $\#H=2$. However, if the pre-image either consisted of one irreducible component of $U_{1,k}$ or the union of two disjoint irreducible components of $U_{1,k}$, then its quotient by $H$ would be smooth over $k$ (since the action of $H$ is free), contradicting our choice of irreducible component of $U_{1,k}/H.$ Let $\sigma_1$ and $\sigma_2$ be one-dimensional cones in our cone decomposition whose classes in $I^+/Y$ correspond to the two irreducible components of $U_{1,k}$ of the pre-image. If $(\ell,1)\in X^\vee\oplus \Z$ is a primitive element of $\sigma_1$, we may without loss of generality assume that $(-\ell, 1)$ is a primitive element of $\sigma_2$. Because the intersection of the two irreducible components of the pre-image is non-trivial, it follows that
$$\tau:= \mathbf{R}_{\geq 0} (\ell, 1) +\mathbf{R}_{\geq 0}(-\ell +b(y,-),1)$$ is contained in the cone decomposition $\{\sigma_{\alpha}\}_{\alpha\in I}$ for some $y\in Y.$ This is true because by \cite{Kü}, Theorem 3.5(iv), there must be a cone $\tau$ in the cone decomposition such that both $\mathbf{R}_{\geq 0} (\ell, 1)$ and $\mathbf{R}_{\geq 0}(-\ell +b(y,-),1)$ (for some $y\in Y$) are faces of $\tau.$ Since the cone decomposition is smooth, the claim follows. A simple calculation shows that 
$$S_{(-y, \mathrm{Id})}(\tau)=S_{(0,[-1])}(\tau).$$ However, we already know from Lemma \ref{conefreelemma} that this is impossible. Hence all irreducible components of $U_{1,k}/H$ are smooth over $k$, which implies the last claim. All that remains to be shown is that $\mathscr{X}$ satisfies condition (iii) of Definition \ref{Kulikovdefinition}. By Proposition \ref{codim2proposition}, it suffices to exhibit a no-where vanishing global 2-form on $U_0/H.$ Consider the diagram
$$\begin{CD}
\mathrm{Bl}_{A[2]} A@>{\rho_\eta}>> X\\
@V{\pi_\eta}VV\\
A.
\end{CD}$$
By \cite{B}, Theorem Chapter 10, Theorem 10.6, we know that there exist no-where vanishing global 2-forms $\omega$ on $A$ and $\beta$ on $X$ such that $\pi_\eta^\ast\omega=\rho_\eta^\ast\beta.$ After replacing $\omega$ and $\beta$ by a scalar multiple if necessary, we may assume that $\omega$ extends to $\mathscr{A}.$ Now let $\pi\colon U_0\to \mathscr{A}$ be the canonical map, and let $\rho\colon U_0\to U_0/H$ be the quotient morphism. The 2-form $\pi^\ast\omega$ is $H$-invariant since this holds generically. 
Furthermore, $\pi^\ast \omega$ vanishes along the ramification locus of $\rho.$ Since $\rho$ is tamely ramified, we deduce that there is a global 2-form on $U_0/H$ which pulls back to $\pi^\ast\omega$ via $\rho$ and which restricts to $\beta$ at the generic fibre. By abuse of notation, we shall call this 2-form $\beta$ as well. We shall now show that $\beta$ vanishes no-where on $U_0/H.$ Indeed, we already know that $\beta$ does not vanish at the generic fibre. Furthermore, we know that $\beta$ does not vanish away from the branch locus of $\rho$, since $\pi^\ast \omega$ does not vanish away from the ramification locus of $\rho.$ But the complement of the intersection of the branch locus of $\rho$ and the special fibre of $U_0/H$ has codimension $\geq 2$ in $U_0/H$, which implies that $\beta$ does not vanish anywhere. 
\end{proof}\\
$\mathbf{Remark.}$ (i) This Theorem implies in particular that Kummer surfaces defined over strictly Henselian complete discrete valuation fields of residue characteristic $p>2$ potentially admit strictly semistable reduction in the category of schemes.\\
(ii) The finite extension from the Theorem can be chosen to be separable: Beginning from an Abelian surface over an arbitrary $K$, we first pass to a finite extension of $K$ to ensure that $A$ has semiabelian reduction and that $A[2]$ is constant. It follows from Grothendieck's $\ell$-adic monodromy theorem, and the fact that the endomorphism $[2]$ of $A$ is étale, that we can choose this extension to be separable. Finally, we pass to the finite extension from Proposition \ref{semconeexistenceproposition}. However, the proof of this Proposition shows that we only need to make sure that this last extension is of ramification index $\nu,$ and the extension $K[X]/\langle X^\nu+\pi_KX+\pi_K\rangle$ is a separable extension of $K$ with this property.\\
(iii) Semistable reduction of K3 surfaces (away from characteristic zero) is also addressed in \cite{Mau}, Section 4, as well as in \cite{LM}. In the first paper, potential semistable reduction of K3 surfaces is proved under the assumption that there exist a very ample line bundle on the K3 surface with small self-intersection compared to the residue characteristic. This condition seems difficult to establish for general Kummer surfaces, so those results are not directly applicable to our situation.\\
(iv) There are various higher-dimensional analogues of Kummer surfaces, all of which are normally called called \it Kummer varieties. \rm The straightforward way of generalizing Kummer surfaces to higher dimensions is to consider minimal desingularizations of the quotients $A/[-1]$ for general Abelian varieties $A$ of dimension $g\geq 2.$ Their arithmetic properties were studied, for example, in \cite{SkZ}. It seems reasonable to expect that the construction of models of Kummer surfaces presented in this paper generalizes to such Kummer varieties. However, these varieties are not Calabi-Yau if $g>2,$ so our definition of Kulikov models (generalized to higher dimensions in the obvious way) does not apply in this setup. Other generalizations of Kummer surfaces are more intricate, often involving Hilbert schemes. 
\section{Comparing degenerations of Abelian surfaces and their associated Kummer surfaces}
\subsection{Reduction and Galois representations}
Now that we have established the existence of strict Kulikov models of Kummer surfaces, at least after replacing the ground field by one of its finite extensions, we shall proceed to studying the relationship between the degeneration of Abelian surfaces and their associated Kummer surfaces. As before, let $A$ be an Abelian surface over $K$. For the moment, we make no further assumptions on $A$. It is possible to relate the étale cohomology of $A$ to the étale cohomology of $X$ as follows: Let $\ell$ be a prime number; we allow $\ell=p$. Choose (and fix) a separable closure $\overline{K}$ of $K$, and define a Galois representation
$$W_\ell:=\bigoplus_{\alpha\in A[2](\overline{K})} \Q_\ell\langle\alpha\rangle,$$ where $\Gal(\overline{K}/K)$ operates by permuting the basis elements. The following Lemma is certainly well-known to the experts, but it seems to be difficult to find a complete proof in the literature, so we provide one here for the sake of completeness:
\begin{lemma}
There is an isomorphism
\begin{align}H^2_{\et}(X_{\overline{K}}, \Q_\ell)=\bigwedge\nolimits^2 H^1_{\et}(A_{\overline{K}}, \Q_\ell)\oplus W_\ell(-1),\label{cohomologyformula}\end{align} which is  $\Gal(\overline{K}/K)$-equivariant. \label{Kummercohomologylemma}
\end{lemma}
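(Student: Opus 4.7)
The plan is to realise the Kummer surface $X$ as a quotient of the blow-up $\tilde{A} := \mathrm{Bl}_{A[2]} A$ by the natural lift of $[-1]$, i.e.\ to work with the factorisation
$$\tilde{A} \xrightarrow{\pi} A, \qquad \tilde{A} \xrightarrow{\rho} X,$$
where $\pi$ is the blow-up and $\rho$ is the degree-$2$ quotient morphism, and then to compute $H^2_{\et}(X_{\overline{K}}, \Q_\ell)$ from $H^2_{\et}(\tilde{A}_{\overline{K}}, \Q_\ell)$ by taking $[-1]$-invariants. Since $\pi$ and $\rho$ are defined over $K$, every identification below will automatically be $\Gal(\overline{K}/K)$-equivariant.

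First I would invoke the \'etale blow-up formula. Because $A[2]$ is a finite \'etale $K$-scheme of codimension $2$ in the smooth variety $A$, this gives
$$H^2_{\et}(\tilde{A}_{\overline{K}}, \Q_\ell) \cong H^2_{\et}(A_{\overline{K}}, \Q_\ell) \oplus H^0_{\et}(A[2]_{\overline{K}}, \Q_\ell)(-1) \cong \bigwedge\nolimits^2 H^1_{\et}(A_{\overline{K}}, \Q_\ell) \oplus W_\ell(-1),$$
using for the second identification that $A$ is an Abelian variety and the definition of $W_\ell$ as the permutation representation on $A[2](\overline{K})$. Next, since $\rho$ is a finite degree-$2$ morphism of smooth varieties and $2$ is invertible in $K$, standard descent for \'etale cohomology under a tame finite quotient yields
$$H^2_{\et}(X_{\overline{K}}, \Q_\ell) \cong H^2_{\et}(\tilde{A}_{\overline{K}}, \Q_\ell)^{[-1]}.$$
It therefore suffices to check that $[-1]$ acts trivially on each of the two summands above.

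For the Abelian summand, $[-1]^\ast$ acts on $H^1_{\et}(A_{\overline{K}}, \Q_\ell)$ as multiplication by $-1$, so it acts on the second exterior power as $(-1)^2 = 1$. For the exceptional summand, each point $\alpha \in A[2]$ is fixed by $[-1]$, and the differential of $[-1]$ at $\alpha$ is multiplication by $-1$ on $T_\alpha A$; upon projectivising, this induces the identity on the exceptional divisor $\mathbb{P}(T_\alpha A) \cong \mathbb{P}^1$. Hence $[-1]$ fixes each exceptional divisor pointwise and, in particular, acts trivially on its cycle class, so trivially on the whole of $W_\ell(-1)$. Taking $[-1]$-invariants then reproduces (\ref{cohomologyformula}). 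The only mildly delicate point is justifying the invariants formula uniformly in $\ell$ (the statement allows $\ell = p$); this is settled by the observation that $\rho$ is \'etale off the exceptional locus, whose image in $X$ has codimension $2$, together with the invertibility of $2$ in $K$, which forces $\rho_\ast \Q_\ell$ to split into $\pm 1$-eigenspaces with only the $+1$-eigenspace pushing forward to the constant sheaf on the smooth scheme $X$.
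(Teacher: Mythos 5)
Your proof is correct, but it takes a genuinely different route from the paper's. You obtain the decomposition of $H^2_{\et}(\tilde{A}_{\overline{K}},\Q_\ell)$ from the \'etale blow-up formula and then descend to $X$ by the finite-quotient invariants formula $H^2_{\et}(X_{\overline{K}},\Q_\ell)\cong H^2_{\et}(\tilde{A}_{\overline{K}},\Q_\ell)^{[-1]}$, which obliges you to compute the involution's action on both summands (trivial on $\bigwedge^2 H^1_{\et}(A_{\overline{K}},\Q_\ell)$ since $[-1]^\ast=-1$ on $H^1$, and trivial on the exceptional classes since each exceptional $\mathbf{P}^1$ is fixed pointwise). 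The paper avoids both of these tools: it splits $H^2$ of $A_{\overline{K}}$ and $\tilde{A}_{\overline{K}}$ into a Picard part and a Brauer part via the Kummer sequence, uses birational invariance of the cohomological Brauer group to identify the new classes on $\tilde{A}_{\overline{K}}$ with the exceptional divisor classes (a copy of $W_\ell(-1)$ complementary to $H^2_{\et}(A_{\overline{K}},\Q_\ell)$, giving $\dim H^2_{\et}(\tilde{A}_{\overline{K}},\Q_\ell)=22$), and then concludes by a pure dimension count: $H^2_{\et}(X_{\overline{K}},\Q_\ell)\to H^2_{\et}(\tilde{A}_{\overline{K}},\Q_\ell)$ is injective because $\tilde{A}\to X$ is surjective, and since $X$ is a K3 surface both spaces have dimension $22$, so the map is an isomorphism. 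Your argument has the advantage of not presupposing that $X$ is a K3 surface with $b_2=22$ (indeed it re-derives this), and it makes the triviality of the monodromy on the two summands transparent; the paper's argument needs no blow-up formula and no analysis of the involution, since the dimension count shows a posteriori that $[-1]$ acts trivially on all of $H^2_{\et}(\tilde{A}_{\overline{K}},\Q_\ell)$ --- which is exactly why your invariants coincide with the whole space. Your closing remark about $\ell=p$ is somewhat beside the point: the only genuine constraint, for your proof and for the paper's alike, is that $\ell$ be invertible in $\overline{K}$, which is automatic when $K$ has characteristic zero; the quotient formula itself needs only that $2$ be invertible in the coefficients, which holds for every $\Q_\ell$.
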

\begin{proof}
Let $\tilde{A}:=\mathrm{Bl}_{A[2]} A,$ and let $\pi\colon \tilde{A}\to A$ be the canonical morphism. Because $\pi$ is surjective, the induced homomorphism of $\Q_\ell$-vector spaces
$H^2_{\et}(A_{\overline{K}}, \Q_\ell)\to H_{\et}^2(\tilde{A}_{\overline{K}}, \Q_\ell)$ is injective (\cite{Kl}, Proposition 1.2.4).
We also have the Chern class map 
$$\tilde{c}_1\colon \Pic \tilde{A}_{\overline{K}}\otimes_{\Z}\Z_\ell(-1)\to H_{\et}^2(\tilde{A}_{\overline{K}}, \Z_\ell),$$ which we shall now study in more detail. First recall that there is a canonical isomorphism $\Pic \tilde{A}_{\overline{K}}=\Pic A_{\overline{K}} \oplus \bigoplus_{\alpha\in A[2](\overline{K})} \Z\langle \alpha \rangle.$ The Kummer sequence $0\to \boldsymbol{\mu}_{\ell^n}\to \Gm\to \Gm\to 0$ of étale sheaves on $A_{\overline{K}}$ and $\tilde{A}_{\overline{K}}$ gives rise to a commutative diagram
\small
$$\begin{tikzcd}[cramped, sep=small, row sep=large]
0 \arrow[r] & \big(\varprojlim{\Pic A_{\overline{K}}}/\langle \ell^n \rangle\big)\otimes_{\Z_\ell}\Q_{\ell}(-1) \arrow[r] \arrow[d] & H_{\et}^2( A_{\overline{K}}, \Q_\ell) \arrow[r] \arrow[d]& (\varprojlim H_{\et}^2(A_{\overline{K}}, \Gm)[\ell^n])\otimes_{\Z_\ell}\Q_\ell(-1) \arrow[r] \arrow{d}{\cong}& 0\\
0\arrow[r]&\big(\varprojlim{\Pic \tilde{A}_{\overline{K}}}/\langle \ell^n \rangle\big)\otimes_{\Z_\ell}\Q_{\ell}(-1)\arrow[r] & H_{\et}^2(\tilde{A}_{\overline{K}}, \Q_\ell) \arrow[r] &(\varprojlim H_{\et}^2(\tilde{A}_{\overline{K}}, \Gm)[\ell^n])\otimes_{\Z_\ell}\Q_\ell(-1) \arrow[r] & 0
\end{tikzcd}$$
\normalsize
with exact rows. The second map in the bottom row induces $\tilde{c}_1\otimes \mathrm{Id}_{\Q_\ell}$, and the map on the right is an isomorphism because the cohomological Brauer group is a birational invariant for smooth projective surfaces (see, for example, \cite{DF}, Proposition 5). We find in particular that $W_\ell(-1)$ is contained in $H_{\et}^2(\tilde{A}_{\overline{K}}, \Q_\ell).$ A diagram chasing argument shows that $W_\ell(-1)$ and $H^2_{\et}(A_{\overline{K}}, \Q_\ell)$ intersect trivially in $H^2_{\et}(\tilde{A}_{\overline{K}}, \Q_\ell).$ We also deduce that the cokernel of $H_{\et}^2(A_{\overline{K}}, \Q_\ell)\to H_{\et}^2(\tilde{A}_{\overline{K}}, \Q_\ell)$ is isomorphic to $W_\ell(-1)$, which implies that $\dim_{\Q_\ell} H_{\et}^2(\tilde{A}_{\overline{K}}, \Q_\ell)=22.$ Since $\tilde{A}\to X$ is surjective, the map $H^2_{\et}(X_{\overline{K}}, \Q_\ell)\to H^2_{\et}(\tilde{A}_{\overline{K}}, \Q_p)$ is injective (again by \cite{Kl}, Proposition 1.2.4), and because $X$ is a K3 surface it must be an isomorphism. Since we already know that $H^2_{\et}(\tilde{A}_{\overline{K}}, \Q_\ell)\supseteq H^2_{\et}(A_{\overline{K}}, \Q_\ell)\oplus W_\ell(-1),$ and the space on the right also has dimension 22, this implies that 
$$H^2_{\et}(X_{\overline{K}}, \Q_\ell)=H^2_{\et}(A_{\overline{K}}, \Q_\ell)\oplus W_\ell(-1).$$
Together with the well-known fact that $H^2_{\et}(A_{\overline{K}}, \Q_\ell)=\bigwedge\nolimits^2 H^1_{\et}(A_{\overline{K}}, \Q_\ell)$, the claim of the Lemma follows.
\end{proof}\\
This calculation in étale cohomology immediately leads to the following partial converse to some of our previous results, for which we shall use the following (well-known) Lemma, the proof of which is a combination of results of C. Nakayama \cite{N}, and ideas going back to  Grothendieck \cite{SGA7}, I, and Rapoport-Zink \cite{RZ}:
\begin{lemma} Let $\ell\not=p$ be a prime number.\\
(i) Let $\mathscr{X}\to \Spec\Og_K$ be proper flat morphism from a regular scheme $\mathscr{X}.$ Suppose further that the special fibre of this morphism is a reduced divisor  \label{wellknownlemma} with normal crossings on $\mathscr{X}.$ Let $X$ be the generic fibre of $\mathscr{X}$. Then the wild inertia group $P\subseteq \Gal(\overline{K}/K)$ acts trivially on $H^i_{\et}(X_{\overline{K}}, \Q_\ell)$ for all $i\geq 0.$  Moreover, the operator on $H^i_{\et}(X_{\overline{K}}, \Q_\ell)$ induced by any $\sigma\in \Gal(\overline{K}/K)$ is unipotent. The same is true if $\mathscr{X}\to \Spec \Og_K$ is a strict Kulikov model of a smooth, projective, and geometrically integral surface $X$ over $K$ with trivial canonical bundle.\\
(ii) Let $A$ be an Abelian variety over $K$. Then $A$ has semiabelian reduction if and only if all $\sigma\in \Gal(\overline{K}/K)$ act unipotently on $H^1_{\et}(A_{\overline{K}}, \Q_\ell).$ 
\end{lemma}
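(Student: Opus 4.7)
The plan for part (i) is to reduce to the strictly semistable scheme case and then invoke the Rapoport-Zink computation of nearby cycles. Since the special fibre is a reduced normal crossings divisor on a regular scheme, \'etale-locally on $\mathscr{X}$ the morphism $\mathscr{X} \to \Spec \Og_K$ is strictly semistable; as the formation of the complex $R\Psi \Q_\ell$ is \'etale-local on the special fibre, it suffices to handle the strictly semistable case. In that case, \cite{RZ} expresses the sheaves $R^q\Psi \Q_\ell$ explicitly in terms of the stratification of $\mathscr{X}_k$ by the intersections of its irreducible components, and the inertia action on $R\Psi \Q_\ell$ is read off directly from this description. Two consequences are immediate: the action of $\Gal(\overline{K}/K)$ on $R\Psi \Q_\ell$ factors through the tame quotient (so $P$ acts trivially), and the associated monodromy operator is nilpotent. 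Passing to hypercohomology via the spectral sequence $H^p(\mathscr{X}_k, R^q\Psi \Q_\ell) \Rightarrow H^{p+q}_{\et}(X_{\overline{K}}, \Q_\ell)$ yields both claims for the scheme case.

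For the Kulikov-model variant, the same argument applies word for word: the computation of nearby cycles needs only that $\mathscr{X}$ be regular with reduced normal crossings special fibre, and the triviality of $\omega_{\mathscr{X}/\Og_K}((\mathscr{X}_k)_{\mathrm{red}})$ is not used at any point. If $\mathscr{X}$ is merely an algebraic space, I would pass to an \'etale atlas by a scheme (which exists by definition) and appeal to Nakayama's log-\'etale formalism \cite{N} to make the comparison between the nearby cycles on the atlas and the cohomology of the generic fibre rigorous; the conclusions about wild inertia and unipotence are preserved under this passage.

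For part (ii), I would cite Grothendieck's criterion of semistable reduction for abelian varieties (\cite{SGA7}, Expos\'e IX). The easier direction is essentially concrete. Suppose $A$ has semiabelian reduction, and let $0 \to T \to \mathscr{A}^0_k \to B \to 0$ be the decomposition of the identity component of the special fibre of the N\'eron model into a torus $T$ and an abelian variety $B$. This datum lifts to a canonical three-step filtration on $V_\ell A := T_\ell A \otimes_{\Z_\ell} \Q_\ell$ whose graded pieces are (up to Tate twist) the Tate modules of $T$, of $B$, and the $(-1)$-twisted dual of the first. Since $k$ is algebraically closed, the torus splits over $\Og_K$ and all roots of unity lie in $\Og_K$, so inertia acts trivially on the Tate module of $T$; since $B$ has good reduction, inertia acts trivially on its Tate module; and the autoduality of $H^1_{\et}(A_{\overline{K}}, \Q_\ell)$ under the Weil pairing handles the third piece. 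It follows that inertia acts unipotently on $H^1_{\et}(A_{\overline{K}}, \Q_\ell)$. The converse is Grothendieck's theorem itself: unipotence of the inertia action on $H^1_{\et}$ forces semiabelian reduction.

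The main obstacle is part (i) in the algebraic space case: one must ensure that the classical nearby cycles computations transfer cleanly to an \'etale atlas without losing the comparison with $H^i_{\et}$ of the generic fibre. This is precisely what Nakayama's logarithmic framework supplies, which is why his work is the right reference for the Kulikov variant.
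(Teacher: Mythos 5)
Your proposal is correct and, for the scheme case of (i) and for (ii), follows essentially the same route as the paper: the paper simply cites Nakayama's nearby-cycles results (\cite{N}, Corollaries 0.1.1 and 3.7) for tameness and unipotence, which are the log-geometric packaging of exactly the Rapoport--Zink computation you invoke, and cites \cite{SGA7}, IX, Corollaire 3.8 for (ii) (noting, as you implicitly do, that inertia is all of $\Gal(\overline{K}/K)$ here since $\Og_K$ is strictly Henselian). Where you genuinely diverge is the strict Kulikov (algebraic space) case: the paper does \emph{not} descend to an \'etale atlas but instead quotes Matsumoto's weight spectral sequence for strict Kulikov models (\cite{Ma2}, Proposition 2.3) to get unipotence, and then recovers triviality of wild inertia from the abstract fact that a unipotent operator of finite order on a $\Q_\ell$-vector space is the identity ($P$ being pro-$p$ and $\ell\neq p$). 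Your atlas argument can be made to work, but as written it conflates two roles: the comparison of $H^i_{\et}(X_{\overline K},\Q_\ell)$ with nearby-cycle cohomology requires properness of $\mathscr{X}$ itself (via proper base change for algebraic spaces), not of the atlas, so the atlas should only be used for the \'etale-local identification of $R\Psi\Q_\ell$ and its inertia action, with the global conclusion then drawn from the spectral sequence over $\mathscr{X}_k$ (which is in any case a scheme for a strict Kulikov model). The paper's citation of \cite{Ma2} buys you this without having to set up nearby cycles for algebraic spaces at all; your route is more self-contained but needs that extra care spelled out. Your expanded sketch of the easy direction of (ii) via the filtration $0\subseteq T_\ell(A)^t\subseteq T_\ell(A)^I\subseteq T_\ell(A)$ is sound (modulo the imprecision that the top graded piece is the character lattice of the dual torus rather than literally the $(-1)$-twisted dual of the toric piece), and is more than the paper supplies.
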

\begin{proof}
Because of our assumptions on the reduction of $\mathscr{X}$ modulo $\mathfrak{m}_K,$ the first two claims of (i) follow from \cite{N}, Corollary 0.1.1 and Corollary 3.7. The third claim of (i) follows because by \cite{Ma2}, Proposition 2.3, there exists a weight spectral sequence for strict Kulikov models, so unipotence of the representation follows. Since the wild inertia subgroup $P\subseteq \Gal(\overline{K}/K)$ is a pro-$p$ group, the operator induced by any $g\in P$ on $H^i_{\et}(X_{\overline{K}}, \Q_\ell)$ is both unipotent and of finite order,  hence trivial. Part (ii) follows from \cite{SGA7}, IX, Corollaire 3.8. Note that the inertia subgroup $I$ of $\Gal(\overline{K}/K)$ coincides with all of $\Gal(\overline{K}/K)$ since $\Og_K$ is strictly Henselian.
\end{proof}
\begin{proposition}
Let $A$ be an Abelian surface over $K$ and let $X$ be the associated Kummer surface. Assume that there exists a (not necessarily strictly) semistable model $\mathscr{X}\to \Spec\Og_K$ of $X$ which is a scheme. Then $A[2]$ is a constant $K$-group scheme. The same is true if $\mathscr{X}\to \Spec \Og_K$ is a strict Kulikov model of $X$.  \label{converseproposition}
\end{proposition}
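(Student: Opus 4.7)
The plan is to read off triviality of the Galois action on $A[2](\overline{K})$ from the mild-reduction hypothesis by locating $W_\ell$ as a direct summand of the Galois representation on $H^2_{\et}(X_{\overline{K}}, \Q_\ell)$, using the Kummer decomposition of Lemma \ref{Kummercohomologylemma}.

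First, I would fix any prime $\ell \neq p$. Lemma \ref{wellknownlemma}(i) handles both scenarios of the statement: its first part applies to semistable scheme models, and its second sentence explicitly covers strict Kulikov models of surfaces with trivial canonical bundle. In either case it yields that $\Gal(\overline{K}/K)$ acts unipotently on $H^2_{\et}(X_{\overline{K}}, \Q_\ell)$. By Lemma \ref{Kummercohomologylemma}, the Galois module $W_\ell(-1)$ is a direct summand of this cohomology, so the induced action on $W_\ell(-1)$ is unipotent as well.

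The second step is to eliminate the Tate twist. Because $\Og_K$ is complete strictly Henselian with algebraically closed residue field of characteristic $p \neq \ell$, Hensel's lemma makes $\boldsymbol{\mu}_{\ell^n}$ constant over $\Og_K$ for every $n$, so $K$ already contains all $\ell$-power roots of unity. The $\ell$-adic cyclotomic character is therefore trivial and $W_\ell(-1) \cong W_\ell$ as $\Gal(\overline{K}/K)$-modules, from which $\Gal(\overline{K}/K)$ acts unipotently on $W_\ell$ itself.

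To conclude, I would upgrade unipotence to triviality. By construction $W_\ell$ is the permutation representation of $\Gal(\overline{K}/K)$ on the finite set $A[2](\overline{K})$, so the action factors through a finite quotient and is in particular semisimple. An operator that is both semisimple and unipotent is the identity; since the action on $W_\ell$ permutes the basis $A[2](\overline{K})$, triviality on $W_\ell$ is the same as triviality on $A[2](\overline{K})$, which is exactly the claim that $A[2]$ is constant. I do not expect a serious obstacle: the argument is a short chain of implications once the right objects are identified, and the only place requiring any care is the cyclotomic-triviality reduction, which is ultimately just a reflection of the strict henselianness of $\Og_K$.
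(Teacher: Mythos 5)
Your proposal is correct and follows essentially the same route as the paper's proof: apply Lemma \ref{wellknownlemma}(i) to get unipotence on $H^2_{\et}(X_{\overline{K}},\Q_\ell)$, restrict to the summand $W_\ell(-1)$ from Lemma \ref{Kummercohomologylemma}, remove the Tate twist via triviality of the cyclotomic character, and conclude from the fact that a finite-order (hence semisimple) unipotent operator is the identity. The paper phrases the last step via the minimal polynomial of $\sigma$ on $W_\ell$, but this is the same observation.
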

\begin{proof}
Since $X$ has semistable reduction and since $\Og_K$ is strictly Henselian, all $\sigma\in \Gal(\overline{K}/K)$ act unipotently on $H^2_{\et}(X_{\overline{K}}, \Q_\ell)$ (see Lemma \ref{wellknownlemma} (i)) and hence on $W_\ell$ (since $K$ is strictly Henselian and $\ell\not=p,$ the $\ell$-adic cyclotomic character is trivial). We must show that the action of any $\sigma\in \Gal(\overline{K}/K)$ on $W_\ell$ is trivial. Fix such a $\sigma$. Then the operator on $W_\ell$ induced by $\sigma$ both has finite order and is unipotent, so by looking at the minimal polynomial of $\sigma$ on $W_\ell$ one sees easily that $\sigma$ must act trivially. 
\end{proof}\\
It is not in general true that $A$ has semiabelian reduction if $X$ has strictly semistable reduction. However, this statement is true up to quadratic twist. For the proof of this result we will need the following 
\begin{lemma}
Let $V$ be a $4$-dimensional vector space over a field $k$ of characteristic 0 and let $f$ be a linear operator on $V$. \\
(i) Assume that the induced operator $\wedge^2f$ on $\bigwedge\nolimits^2 V$ is unipotent. If $f$ is not unipotent, then $-f$ is.\\
(ii) Suppose that $\wedge^2f$ equals the identity. Then $f=\pm\mathrm{Id}_V.$
\end{lemma}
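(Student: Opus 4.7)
The natural approach is to analyse the spectrum of $f$ on $V\otimes_k\overline{k}$. Since every statement in (i) and (ii) is invariant under base change to the algebraic closure, I will assume $k$ is algebraically closed and let $\lambda_1,\lambda_2,\lambda_3,\lambda_4$ be the eigenvalues of $f$ (with multiplicity). The eigenvalues of $\wedge^2 f$ on the six-dimensional space $\bigwedge^2 V$ are then the pairwise products $\lambda_i\lambda_j$ for $i<j$.

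For (i), the hypothesis that $\wedge^2 f$ is unipotent translates into $\lambda_i\lambda_j=1$ for all $i\neq j$. Comparing $\lambda_1\lambda_j=1$ for $j=2,3,4$ gives $\lambda_2=\lambda_3=\lambda_4$; then $\lambda_2\lambda_3=1$ forces $\lambda_2^2=1$, and $\lambda_1\lambda_2=1$ gives $\lambda_1=\lambda_2^{-1}=\lambda_2$. So all $\lambda_i$ are equal to the same $\varepsilon\in\{\pm1\}$. If $\varepsilon=1$ the characteristic polynomial of $f$ is $(x-1)^4$ and Cayley--Hamilton gives $(f-\mathrm{Id})^4=0$, i.e.\ $f$ is unipotent; if $\varepsilon=-1$ the same argument applied to $-f$ shows that $-f$ is unipotent.

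For (ii), from (i) and the fact that $\wedge^2(-f)=\wedge^2 f=\mathrm{Id}$, we may replace $f$ by $-f$ if necessary and assume $f$ is unipotent. Set $A:=\log f=\sum_{m\geq 1}(-1)^{m+1}(f-\mathrm{Id})^m/m$, a nilpotent operator on $V$ with $f=e^A$, well-defined in characteristic $0$. The exponential intertwines with $\bigwedge^2$ in the sense that $\wedge^2(e^A)=e^{D(A)}$, where $D(A)$ is the derivation $v\wedge w\mapsto Av\wedge w+v\wedge Aw$ on $\bigwedge^2 V$. Since $D(A)$ is again nilpotent and $e^{D(A)}=\mathrm{Id}$, the standard fact that a nilpotent $M$ with $e^M=\mathrm{Id}$ must vanish (multiply the identity $M+M^2/2+\cdots=0$ by the highest non-vanishing power of $M$) gives $D(A)=0$. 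It therefore remains to show that a nilpotent operator $A$ on a four-dimensional space with $D(A)=0$ is zero; this is the only step requiring a small argument. Pick $e_1$ with $Ae_1=0$; for any $v$, the identity $D(A)(e_1\wedge v)=0$ becomes $e_1\wedge Av=0$, so $Av\in ke_1$ for all $v$. Writing $Ae_j=\alpha_j e_1$ for $j=2,3,4$ and expanding $D(A)(e_i\wedge e_j)=0$ for $i,j\in\{2,3,4\}$, the linear independence of the $e_1\wedge e_j$ forces each $\alpha_j=0$, whence $A=0$. Therefore $f=e^0=\mathrm{Id}$; unwinding the optional sign change gives $f=\pm\mathrm{Id}_V$.

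I expect the eigenvalue bookkeeping for (i) to be entirely routine; the only substantive point is the last lemma $D(A)=0\Rightarrow A=0$ in (ii), and even there the kernel-first basis trick dispatches it in a few lines. No hidden analytic input is needed beyond the existence of $\log$ on unipotent matrices in characteristic $0$.
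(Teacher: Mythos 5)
Your proof is correct, and it departs from the paper's in instructive ways. For part (i) the paper runs a case analysis over the possible Jordan forms of $f$ (diagonalizable; a single size-two block; exactly two blocks; one block), in each case extracting enough eigenvalues of $\wedge^2 f$ of the form $\lambda_i\lambda_j$ to force all $\lambda_i$ to coincide and square to $1$. Your observation that the full multiset of eigenvalues of $\wedge^2 f$ is $\{\lambda_i\lambda_j\}_{i<j}$ \emph{regardless} of diagonalizability (upper-triangularize $f$ over $\overline{k}$) collapses all of these cases into a single three-line computation; this is the same underlying eigenvalue bookkeeping, but organized so that the Jordan structure never has to be discussed. For part (ii) the paper only asserts that the argument is ``entirely analogous'' (the intended reading being that any nontrivial Jordan block of $f$ produces a nontrivial Jordan block of $\wedge^2 f$, so $\wedge^2 f=\mathrm{Id}$ forces $f$ semisimple with a single eigenvalue $\pm1$), whereas you reduce to the unipotent case via (i), set $A=\log f$, and show that the induced derivation $D(A)$ on $\bigwedge\nolimits^2 V$ vanishes only if $A=0$ once $\dim V\geq 3$ --- a genuinely different, Lie-algebra-level route, at the modest cost of invoking $\log$ and $\exp$ of nilpotents and the identity $\wedge^2(e^A)=e^{D(A)}$, both unproblematic in characteristic $0$. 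Your kernel-first basis argument correctly isolates where $\dim V\geq 3$ is used (for $\dim V=2$ the statement would fail, since there $D(A)$ is just $\operatorname{trace}(A)$). One micro-quibble: to see that a nilpotent $M$ with $e^M=\mathrm{Id}$ vanishes, you should multiply $M+M^2/2+\cdots=0$ by $M^{r-1}$, not by the highest nonvanishing power $M^r$ (which yields $0=0$), or more simply note that the factor $\mathrm{Id}+M/2+\cdots$ is unipotent and hence invertible; this does not affect the validity of the proof.
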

\begin{proof}
(i) Since being unipotent is a condition on the characteristic polynomial of an operator, which is invariant under extensions of the ground field, we may assume without loss of generality that $k$ is algebraically closed. Suppose first that $f$ is diagonalizable with eigenvalues $\lambda_1,..., \lambda_4.$ Then, for all $1\leq i,j\leq 4$, $i<j$, $\lambda_i\lambda_j$ is an eigenvalue of $\wedge^2f.$ But since $\wedge^2f$ is unipotent, we deduce that all $\lambda_j$ must be equal, and that their common value must be either $1$ or $-1,$ so $f=\pm\mathrm{Id}_V.$ The case where $f$ has two Jordan blocks of size 1 and one Jordan block of size 2 can be dealt with analogously. Suppose now that $f$ has precisely two Jordan blocks, with eigenvalues $\lambda_1$ and $\lambda_2.$ Then there is a basis $e_1,..., e_4$ of $V$ such that $f(e_1)=\lambda_1e_1,$ $f(e_2)=\lambda_1e_2+e_1$, and $f(e_3)=\lambda_2e_3.$ A simple calculation shows that $\lambda_1^2$ and $\lambda_1\lambda_2$ are eigenvalues of $\wedge^2f.$ Again because $\wedge^2f$ is unipotent, it follows that $\lambda_1=\lambda_2$, and that these numbers must be either equal to $1$ or to $-1$. Finally suppose that $f$ only has one Jordan block with eigenvalue $\lambda.$ Let $e_1,...,e_4$ be a basis of $V$ such that $f(e_1)=\lambda e_1$ and $f(e_2)=\lambda e_2+e_1.$ Clearly, $\lambda^2$ is an eigenvalue of $\wedge^2f, $ so $\lambda=\pm1.$ Putting all pieces together, the claim follows. Statement (ii) can be proved in a way entirely analogous to part (i) and will be left to the reader. 
\end{proof}\\
For the next Proposition, recall that, given a continuous homomorphism $\Gal(\overline{K}/K)\to \{1, -1\},$ we can construct the \it quadratic twist \rm of $A$ by  $q$ as follows: We define an action of $\Gal(\overline{K}/K)$ $A_{\overline{K}}$ by declaring that $\sigma\in \Gal(\overline{K}/K)$ act on a functorial point $x$ of $A$ as $x\mapsto q(\sigma) x^{\sigma}.$ Here $-^\sigma$ refers to the Galois action on $A_{\overline{K}}=A\times_K\Spec\overline{K}$ on the second factor. The quotient of this action is an Abelian variety over $K,$ which we shall denote by $A^q.$ Clearly, $A_{\overline{K}}$ and $A^q_{\overline{K}}$ are canonically isomorphic, and we obtain an isomorphism between their associated Kummer surfaces. It follows immediately from the definitions that the isomorphism between the Kummer surfaces is Galois equivariant, and hence descends to $K$. Therefore $A$ and $A^q$ define the same Kummer surface.
\begin{proposition}
Let $A$ be an Abelian surface over $K$ and assume that the Kummer surface $X$ associated with $A$ has semistable reduction or admits a strict Kulikov model. Then there exists a continuous homomorphism \label{quadratictwistproposition}
$q\colon \Gal(\overline{K}/K)\to \{1,-1\}$ such that the quadratic twist $A^q$ of $A$ by $q$ has semiabelian reduction.
\end{proposition}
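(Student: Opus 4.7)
The plan is to construct $q$ through the Galois representation $\rho\colon \Gal(\overline{K}/K) \to GL(V)$ on $V := H^1_{\et}(A_{\overline{K}}, \Q_\ell)$, where $\ell \not= p$. By the hypothesis and Lemma \ref{wellknownlemma}(i), the Galois action on $H^2_{\et}(X_{\overline{K}}, \Q_\ell)$ is unipotent; Lemma \ref{Kummercohomologylemma} identifies $\bigwedge^2 V$ as a Galois subrepresentation of this $H^2$, so $\bigwedge^2 \rho$ is unipotent-valued. The preceding algebraic lemma then yields that for each $\sigma$, either $\rho(\sigma)$ or $-\rho(\sigma)$ is unipotent, and not both (the eigenvalues cannot simultaneously be $+1$ and $-1$).

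To make this sign assignment into a continuous group homomorphism, I would pass to the Zariski closure $\overline{H} \subseteq GL(V)$ of $\rho(\Gal(\overline{K}/K))$. Since unipotence is a Zariski-closed condition on an endomorphism, $\bigwedge^2 \overline{H}$ is an algebraic subgroup of $GL(\bigwedge^2 V)$ consisting entirely of unipotent elements, and is therefore connected (unipotent algebraic groups in characteristic zero are connected via $\exp$). By part (ii) of the algebraic lemma, the kernel of the homomorphism $\bigwedge^2\colon \overline{H} \to \bigwedge^2 \overline{H}$ equals $\overline{H} \cap \{\pm I_V\}$.

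The main technical step, which I expect to be the chief obstacle, is to show that the identity component $\overline{H}{}^0$ is itself unipotent. I would argue that $-I \notin \overline{H}{}^0$: otherwise $\bigwedge^2\colon \overline{H}{}^0 \to \bigwedge^2\overline{H}{}^0$ would be a nontrivial central $2$-isogeny of connected algebraic groups with unipotent target, which is impossible in characteristic zero because unipotent algebraic groups are algebraically simply connected (they are isomorphic to affine space via $\exp$). Hence $\bigwedge^2$ embeds $\overline{H}{}^0$ into the unipotent group $\bigwedge^2 \overline{H}$, so $\overline{H}{}^0$ is unipotent. A short index computation, using that $\bigwedge^2\overline{H}/\bigwedge^2\overline{H}{}^0$ is simultaneously a quotient of the finite group $\overline{H}/\overline{H}{}^0$ and a quotient of connected algebraic groups, forces $\bigwedge^2 \overline{H}{}^0 = \bigwedge^2\overline{H}$ and $[\overline{H} : \overline{H}{}^0] \leq 2$.

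Define $q$ as the composition of $\rho$ with the projection $\overline{H} \to \overline{H}/\overline{H}{}^0 \hookrightarrow \{\pm 1\}$; this is a continuous homomorphism, and by construction $q(\sigma)\rho(\sigma) \in \overline{H}{}^0$ is unipotent for every $\sigma$. Since $[-1]$ acts as $-1$ on $H^1$ of an Abelian variety, the Galois representation of the quadratic twist $A^q$ on $H^1_{\et}(A^q_{\overline{K}}, \Q_\ell)$ is exactly $q\rho$, hence unipotent, and Lemma \ref{wellknownlemma}(ii) then gives that $A^q$ has semiabelian reduction, as desired.
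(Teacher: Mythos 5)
Your proposal is correct, and it reaches the conclusion by a genuinely different route from the paper. Both arguments share the same two inputs: the decomposition $H^2_{\et}(X_{\overline{K}},\Q_\ell)\cong\bigwedge^2 H^1_{\et}(A_{\overline{K}},\Q_\ell)\oplus W_\ell(-1)$ forcing $\bigwedge^2\rho$ to be unipotent, and the linear-algebra lemma on $\bigwedge^2$ of a $4$-dimensional space. From there the paper stays elementary: it first proves that $\im\rho$ is \emph{abelian} (using that $\rho(g)=\pm\mathrm{Id}$ on the wild inertia subgroup $P$, that $\Gal(\overline{K}/K)/P$ is abelian, and a trace argument to rule out anticommutation), then defines $q(\sigma)$ pointwise as the sign making $q(\sigma)\rho(\sigma)$ unipotent, and verifies by hand that $q$ is multiplicative (simultaneous upper-triangularization) and continuous (via the characteristic-polynomial map to $\Q_\ell[x]^{(4)}$). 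You instead pass to the Zariski closure $\overline{H}$ and use structure theory of algebraic groups in characteristic zero: $\bigwedge^2\overline{H}$ is unipotent hence connected, part (ii) of the lemma identifies $\ker(\bigwedge^2)=\overline{H}\cap\{\pm I\}$, and the fact that connected unipotent groups in characteristic zero are torsion-free (equivalently admit no nontrivial isogenies onto them) excludes $-I$ from $\overline{H}^0$, making $\overline{H}^0$ unipotent and of index at most $2$. Your $q$ is then the component-group character, so multiplicativity is automatic and continuity follows because $\overline{H}^0$ is Zariski-clopen in $\overline{H}$, hence clopen for the $\ell$-adic topology (worth one explicit line). Your route buys a cleaner package --- no abelianness statement, no wild-inertia analysis, and in fact no use of part (i) of the $\bigwedge^2$ lemma, only part (ii) --- at the cost of invoking standard but nontrivial facts about unipotent groups (connectedness, torsion-freeness, and that the image of the identity component under a surjective homomorphism is the identity component of the image); the paper's route is longer but entirely hands-on linear algebra. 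Both are complete proofs.
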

\begin{proof}
Denote by 
$$\rho\colon \Gal(\overline{K}/K)\to \mathrm{Aut}_{\Q_\ell}(H^1_{\et}(A_{\overline{K}}, \Q_\ell))$$ the Galois representation attached to $A$.  By Lemma \ref{wellknownlemma} (i), we know that the action of any $\sigma\in \Gal(\overline{K}/K)$ on $H^2_{\et}(X_{\overline{K}}, \Q_\ell)$ is unipotent. It follows from Formula (\ref{cohomologyformula}) (Lemma \ref{Kummercohomologylemma}) that all $\sigma\in \Gal(\overline{K}/K)$ operate unipotently on $\bigwedge^2 H^1_{\et}(A_{\overline{K}}, \Q_\ell).$ From the Lemma preceding this Proposition we may deduce that, for $\sigma\in \Gal(\overline{K}/K)$, either $\rho(\sigma)$ or $-\rho(\sigma)$ is unipotent. Using Lemma \ref{wellknownlemma} (i), we further deduce that $\rho(g)=\pm \mathrm{Id}$ if $g$ lies in the wild inertia subgroup $P\subseteq \Gal(\overline{K}/K).$ We shall first prove that the image of $\rho$ is Abelian. Let $\sigma, \tau\in \Gal(\overline{K}/K).$ Since $\Gal(\overline{K}/K)/P$ is Abelian, there exists $g\in P$ such that $\sigma\tau=g\tau\sigma.$ If $\rho(g)$ were equal to $-\mathrm{Id},$ we would have
$$\rho(\sigma)\rho(\tau)=-\rho(\tau)\rho(\sigma).$$
This, however, would mean that $\rho(\sigma)\rho(\tau)\rho(\sigma)^{-1}=-\rho(\tau),$ so
$$\mathrm{trace} \,\rho(\tau)=\mathrm{trace} (-\rho(\tau)).$$ But this is impossible since $\mathrm{trace}\,\rho(\tau)=\pm4.$ Hence we must have $\rho(g)=\mathrm{Id},$ which implies $\rho(\sigma)\rho(\tau)=\rho(\tau)\rho(\sigma).$ Using the preceding Lemma, we now define, for all $\sigma\in \Gal(\overline{K}/K),$
$$q(\sigma):=\begin{cases} 1 & \text{if $\sigma$ acts unipotently on $H^1_{\et}(A_{\overline{K}}, \Q_\ell)$}\\
                                           -1 & \text{if $-\sigma$ acts unipotently on $H^1_{\et}(A_{\overline{K}}, \Q_\ell)$}.
                       \end{cases}$$
First observe that $q$ is indeed a homomorphism. This can be seen as follows: By what we have just proved, the operators defined by any $\sigma, \tau\in \Gal(\overline{K}/K)$ on $ H^1_{\et}(A_{\overline{K}}, \Q_\ell)$ commute. Hence, we can find a basis of $H^1_{\et}(A_{\overline{K}}, \Q_\ell)$ with respect to which both these operators have upper triangular form, and such that all entries on the diagonal are $q(\sigma)$, $q(\tau),$ respectively. The product of these two matrices will also have upper triangular form, with all diagonal entries equal to $q(\sigma)q(\tau).$ Hence $q$ is a homomorphism. Let us now show that $q$ is continuous. Let $\Q_\ell[x]^{(4)}$ be the vector space of polynomials of degree $\leq 4$ over $\Q_\ell,$ endowed with the topology inherited from $\Q_\ell.$ Then the map 
$$\mathrm{End}_{\Q_\ell}(H^1_{\et}(A_{\overline{K}}, \Q_\ell))\to \Q_\ell[x]^{(4)}$$ which maps an operator to its characteristic polynomial is continuous. Since Galois representations on étale cohomology spaces are always continuous, we see that the homomorphism $\Gal(\overline{K}/K)\to \Q_\ell[x]^{(4)}$ sending $\sigma\in \Gal(\overline{K}/K)$ to the characteristic polynomial of the operator $\rho(\sigma)$ on $H^1_{\et}(A_{\overline{K}}, \Q_\ell)$ is also continuous. However, we know from the Lemma preceding this Proposition that the characteristic polynomial of the operator on $H^1_{\et}(A_{\overline{K}}, \Q_\ell)$ defined by $\sigma\in \Gal(\overline{K}/K)$ will be either $(x-1)^4$ (if $\sigma$ acts unipotently) or $(x+1)^4$ (if $-\sigma$ acts unipotently). This implies that $q$ is continuous. In particular, we can construct the quadratic twist $A^q$ of $A$ by $q$. By construction, the $\Q_\ell$-vector spaces $H^1_{\et}(A_{\overline{K}}, \Q_\ell)$ and $H^1_{\et}(A^q_{\overline{K}}, \Q_\ell)$ are canonically identified, and the action of $\sigma\in \Gal(\overline{K}/K)$ on $H^1_{\et}(A^q_{\overline{K}}, \Q_\ell)$ is equal to $q(\sigma)$ times the action of $\sigma$ on $H^1_{\et}(A_{\overline{K}}, \Q_\ell)$. Hence $\Gal(\overline{K}/K)$ acts unipotently on $H^1_{\et}(A^q_{\overline{K}}, \Q_\ell).$ By Lemma \ref{wellknownlemma} (ii), it follows that $A^q$ has semiabelian reduction. 
\end{proof}\\
\noindent $\mathbf{Remark}.$ (i) The Abelian surface $A$ will always be tamely ramified (i.e., the wild inertia group will act trivially on $H^1_{\et}(A_{\overline{K}}, \Q_\ell))$ provided that $p=0$ or $p>5$ (see \cite{L}, Theorem 3.9 for the case where $p>0$). Hence one can give a shorter proof of the fact that $\im \rho$ is Abelian in these cases.\\
(ii) In the situation above (i.e., if the Kummer surface associated with $A$ has semistable reduction), it follows in particular that there is a \it unique \rm quadratic twist of $A$ which has semiabelian reduction. Indeed, if $A_1$ and $A_2$ are both quadratic twists of $A$ with semiabelian reduction, then $A_2$ is a quadratic twist of $A_1$ by some continuous character $q\colon \Gal(\overline{K}/K)\to \{1,-1\}.$ The $\Q_\ell$-vector spaces $H^1_{\et} (A_{1, \overline{K}}, \Q_{\ell})$ and $H^1_{\et}(A_{2, \overline{K}}, \Q_{\ell})$ are canonically identified, and the $\Gal(\overline{K}/K)$-actions differ by precisely $q.$ Hence, in order for both Galois representations to be unipotent, it is necessary that $q$ be trivial, so $A_1=A_2.$ This is no longer true if we remove the hypothesis that $\Og_K$ be strictly Henselian. If $K$ admits non-trivial unramified quadratic extensions, we could twist by a non-trivial unramified quadratic character without affecting the Abelian surfaces' reduction behaviour.\\
\\
Let $A$ be an Abelian variety over $K$ with semiabelian reduction. As indicated above, the $\Q_\ell$-vector space $H^1_{\et}(A_{\overline{K}}, \Q_\ell)$ comes with a nilpotent monodromy operator $N$. The following (well-known) Proposition shows that $N$ contains much information about the reduction of $A$:
\begin{proposition}
Let $A$ be an Abelian variety over $K$ with semiabelian reduction and let $\mathscr{A}\to \Spec\Og_K$ be its Néron model. Let $N$ be the monodromy operator on $H^1_{\et}(A_{\overline{K}}, \Q_\ell)$. Then we have
$$t(A)=r_2=\mathrm{rank}_{\Q_\ell} N.$$

\end{proposition}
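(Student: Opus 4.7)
The plan is to reduce the statement to a well-known structural result about the monodromy filtration of the Tate module of a semistable Abelian variety, due to Grothendieck (SGA 7, IX) and Raynaud. The first equality $t(A) = r_2$ is just the definition of toric rank recalled at the end of Section~2, so the entire content is the second equality $r_2 = \mathrm{rank}_{\Q_\ell} N$.

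First, I would pass from $H^1_{\et}(A_{\overline{K}}, \Q_\ell)$ to the rational Tate module $V := V_\ell A$, noting that $H^1_{\et}(A_{\overline{K}}, \Q_\ell) \cong V^\vee$ canonically and Galois-equivariantly, so that the monodromy operator on $H^1$ is (up to sign) the transpose of the monodromy operator on $V$; in particular, the two ranks agree. The next step is to write down the three-step monodromy filtration
$$0 \subseteq V^0 \subseteq V^1 \subseteq V,$$
constructed from the Raynaud extension $0 \to T \to \tilde{G} \to E \to 0$ and the Raynaud uniformization $A^{\mathrm{an}} = \tilde{G}^{\mathrm{an}}/Y$ of the semiabelian Abelian variety $A$. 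Concretely, $V^0 = V_\ell T$ has dimension $r_2$, $V^1/V^0 \cong V_\ell E$ has dimension $2(g - r_2)$, and $V/V^1 \cong Y \otimes_\Z \Q_\ell$ has dimension $\mathrm{rk}\, Y = r_2$; the latter identification is immediate from the uniformization, and the identification of $V^0$ with $V_\ell T$ uses the fact that the formal completions of $\tilde{G}$ and $G$ along their special fibres coincide.

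The main input from SGA 7, IX is that $N$ vanishes on $V^1$ and that the induced map $V/V^1 \to V^0$ is an isomorphism (it is given, up to Tate twist which does not affect the rank since $k$ is algebraically closed, by the bilinear pairing $b \colon Y \times X \to \Z$ from the degeneration data recalled in Section~2.1). Granting this, $N$ factors as
$$V \twoheadrightarrow V/V^1 \xrightarrow{\sim} V^0 \hookrightarrow V,$$
so $\mathrm{rank}_{\Q_\ell} N = \dim_{\Q_\ell} V^0 = r_2$, which completes the proof.

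The only real obstacle is locating and citing the correct statements: the existence of the filtration and the description of $N$ in terms of the pairing $b$ both live inside SGA 7, IX (and are also re-derived in \cite{FC}, Chapter III). Once those are quoted the remainder is a bookkeeping exercise comparing dimensions of graded pieces, and there is no need to work with the explicit Galois action or with the cyclotomic character, since only the rank of $N$ is at stake.
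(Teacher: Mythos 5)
Your proof is correct, but it routes through a slightly different part of SGA~7, IX than the paper does. Both arguments hinge on the same three-step filtration of $V:=T_\ell(A)\otimes_{\Z_\ell}\Q_\ell$ (your $V^0\subseteq V^1\subseteq V$ is exactly the paper's $T_\ell(A)^t\subseteq T_\ell(A)^{I}\subseteq T_\ell(A)$ after tensoring with $\Q_\ell$, and rationally $V_\ell\tilde{G}=V^{I}$ as you implicitly use), and both note that $N=\sigma-1$ kills $V^{I}$ and maps into the toric part. The difference is in how the rank is pinned down: the paper computes $\dim\ker N=\dim V^{I}$ by invoking Grothendieck's orthogonality theorem (SGA~7, IX, Th\'eor\`eme 5.2), which identifies $T_\ell(A^\vee)^{I}$ as the Weil-pairing orthogonal complement of $T_\ell(A)^t$, and then counts dimensions via the dual abelian variety; you instead compute $\dim\operatorname{im}N=\dim V^0$ by identifying the induced map $V/V^{1}\to V^0$ with the monodromy pairing $b\colon Y\times X\to\Z$ and using its nondegeneracy. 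Your route meshes nicely with the degeneration data $(X,Y,\phi,a,b)$ already set up in Section~2.1 --- nondegeneracy of $b$ is immediate there, since $b(-,\phi(-))$ is positive definite and $\phi$ has finite cokernel --- and it makes all three graded pieces explicit, which is pleasant; the cost is that you must cite the comparison between the abstract $\ell$-adic monodromy operator and the pairing $b$ (SGA~7, IX, \S10, or \cite{FC}, Chapter III), which is a heavier reference than the orthogonality theorem alone. The paper's route avoids the uniformization entirely and needs only the Weil pairing and Th\'eor\`eme 5.2, at the price of a less transparent dimension count. Your preliminary reduction from $H^1_{\et}(A_{\overline{K}},\Q_\ell)$ to $V_\ell A$ via duality is fine and does not affect the rank.
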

\begin{proof}
This follows from Grothendieck's orthogonality theorem. More precisely, let $A^\vee$ denote the dual Abelian variety of $A$ and recall that, if $\sigma$ is a topological generator of the image of the Galois representation on $T_\ell(A)\otimes_{\Z_\ell}\Q_\ell,$ we have $(\sigma-1)^2=0$ (by \cite{SGA7}, IX, Corollaire 3.5.2), so $N=\sigma-1.$ Consider the filtration 
$$0\subseteq T_\ell(A)^t\subseteq T_\ell(A)^{I}\subseteq T_\ell(A),$$ where $T_\ell(A)^{I}$ stands for the $\Z_\ell$-sublattice of $\Gal(\overline{K}/K)$-invariant elements (which therefore, modulo $\ell^n$, extend to sections of the Néron model $\mathscr{A}$ of $A$), and $T_\ell(A)^t$ stands for the sublattice of $T_\ell(A)^{I}$ consisting of all elements which, modulo $\ell^n$, restrict to the toric part of $\mathscr{A}^0_k$. We shall employ analogous notation for $T_\ell(A^\vee).$ By \cite{SGA7}, IX, Théorème 5.2, $T_\ell(A^\vee)^{I}$ is the orthogonal complement of $T_\ell(A)^t$ with respect to the Weil pairing. In particular, we see that the image of the monodromy operator $N$ on $T_\ell(A)\otimes_{\Z_\ell}\Q_\ell$ is contained in $T_\ell(A)^t\otimes_{\Z_\ell}\Q_\ell,$ and we have $\ker N=T_\ell(A)^{I}.$ Furthermore, the Weil pairing induces a surjection
$$T_\ell (A)\otimes_{\Z_\ell}\Q_\ell\overset{\cong}{\to} \Hom_{\Q_\ell}(T_\ell(A^\vee)\otimes_{\Z_\ell}\Q_\ell, \Q_\ell(1)) \to \Hom_{\Q_{\ell}}(T_\ell(A^\vee)^t\otimes_{\Z_\ell}\Q_\ell, \Q_\ell(1))$$ whose kernel is precisely $T_\ell(A)^{I}\otimes_{\Z_\ell}\Q_\ell.$ But since $t(A)=\rk_{\Z_\ell} T_\ell(A)^t=\rk_{\Z_\ell} T_\ell(A^\vee)^t,$ a dimension counting argument implies the claim. 
\end{proof}
\subsection{Relations between the degenerations}
Let $A$ be an Abelian surface over $K$ and let $X$ be the associated Kummer surface. Assume that $X$ admits a strict Kulikov model $\mathscr{X}\to \Spec\Og_K$. The aim of the present subsection is to prove that the degeneration behaviour of the Kummer surface of $X$ is completely governed by the degeneration behaviour of $A$. Let $\mathscr{A}$ be the Néron model of $A$ over $\Og_K$ and recall that there is a nonnegative integer $r=r_2$ such that we have an exact sequence
$$0\to \mathbf{G}_{\mathrm{m}}^r\to \mathscr{A}^0_k\to B\to 0,$$ where $B$ is an Abelian variety over $k$. We have the following
\begin{theorem}
Let $A$ be an Abelian surface over $K$ with semiabelian reduction. Let $X$ be the associated Kummer surface. Assume that $X$ admits a strict Kulikov model $\mathscr{X}\to \Spec\Og_K.$ Then the special fibre $\mathscr{X}_k$ is of type I (resp. type II, type III) if and only if the toric rank $r=t(A)$ of $A$ is equal to $0$ (resp. 1, 2). 
\end{theorem}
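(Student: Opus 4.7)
The plan is to reduce the classification of the type of $\mathscr{X}_k$ to a computation of the nilpotency index of the monodromy operator $N_X$ on $H^2_{\et}(X_{\overline{K}},\Q_\ell)$, which by the proposition in Section 2.2 distinguishes the three types. By Proposition \ref{converseproposition}, the existence of a strict Kulikov model of $X$ forces $A[2]$ to be constant, so the Galois action on $W_\ell$ is trivial; since $\Og_K$ is strictly Henselian and $\ell\neq p$, the cyclotomic character is trivial as well, and consequently $W_\ell(-1)$ contributes nothing to the monodromy. Applying Lemma \ref{Kummercohomologylemma}, the nilpotency index of $N_X$ is therefore equal to the nilpotency index of the induced monodromy on $\bigwedge^{2}V$, where $V:=H^1_{\et}(A_{\overline{K}},\Q_\ell)$.

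Next I would describe the monodromy operator $N'$ on $\bigwedge^{2}V$ explicitly. If $N$ denotes the monodromy operator on $V$, then $N'(v\wedge w)=Nv\wedge w+v\wedge Nw$ is the induced derivation. Since $A$ has semiabelian reduction, Grothendieck's theorem gives $(\sigma-1)^2=0$ on $V$, hence $N^2=0$; this was already noted in the proof of the preceding Proposition. Expanding,
\[
(N')^{2}(v\wedge w)=2\,Nv\wedge Nw,\qquad (N')^{3}(v\wedge w)=0,
\]
so the nilpotency index of $N'$ is at most $3$, and the condition $(N')^{2}=0$ is equivalent to $Nv\wedge Nw=0$ for all $v,w\in V$, i.e.\ to $\dim_{\Q_\ell}\mathrm{Im}(N)\leq 1$.

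The final step is the case analysis, using the equality $t(A)=\mathrm{rank}\,N$ from the previous Proposition. If $t(A)=0$, then $N=0$, so $N'=0$ and $N_X$ has nilpotency index $1$, giving type I. If $t(A)=1$, then $\mathrm{Im}(N)$ is one-dimensional, hence $(N')^{2}=0$ while $N'\neq 0$, so the nilpotency index is $2$ and $\mathscr{X}_k$ is of type II. If $t(A)=2$, choose a basis $e_1,\dots,e_4$ of $V$ with $Ne_1=Ne_2=0$, $Ne_3=e_1$, $Ne_4=e_2$; then $(N')^{2}(e_3\wedge e_4)=2e_1\wedge e_2\neq 0$, so the nilpotency index equals $3$ and $\mathscr{X}_k$ is of type III.

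There is no real obstacle here: the result is a direct consequence of the cohomological decomposition of Lemma \ref{Kummercohomologylemma}, the vanishing of the cyclotomic character in the strictly Henselian setting, and the relation between the toric rank and $\mathrm{rank}\,N$ established earlier. The only point requiring some care is the verification that the derivation on $\bigwedge^{2}V$ has exactly the expected nilpotency index in each of the three cases, which is the short linear-algebra computation above.
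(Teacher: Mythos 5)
Your proposal is correct and follows essentially the same route as the paper: both reduce to the nilpotency index of $N_X$ via the decomposition of Lemma \ref{Kummercohomologylemma}, kill the $W_\ell(-1)$ summand using Proposition \ref{converseproposition} and triviality of the cyclotomic character, and then do a rank-of-$N$ case analysis using $t(A)=\mathrm{rank}\,N$ and $N^2=0$. Your identity $(N')^{2}(v\wedge w)=2\,Nv\wedge Nw$ is a slightly slicker packaging of the paper's explicit Jordan-basis computation, but it is the same argument.
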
 
\begin{proof}
Since $A$ has semiabelian reduction, we know that $H^2_{\et}(A_{\overline{K}}, \Q_\ell)$ is a unipotent representation. By Formula (\ref{cohomologyformula}) from Lemma \ref{Kummercohomologylemma}, we have
$$H^2_{\et}(X_{\overline{K}}, \Q_\ell)=\bigwedge\nolimits^2 H^1_{\et}(A_{\overline{K}}, \Q_\ell)\oplus W_\ell(-1),$$ on which the monodromy operator $N_X$ is given by
\begin{align}N_X= (N\wedge \mathrm{Id}+\mathrm{Id}\wedge N) \oplus 0,\label{Nformula}\end{align} where $N$ denotes the monodromy operator on $H^1_{\et}(A_{\overline{K}}, \Q_\ell)$. This can be seen as follows: Form the proof of Proposition \ref{converseproposition}, we know that the Galois representation on $W_\ell(-1)$ is trivial, which means that the restriction of $N_X$ to $W_\ell(-1)$ vanishes. One also sees easily that, on $H^1_{\et}(A_{\overline{K}}, \Q_\ell)^{\otimes 2}$, we have $\log(\sigma\otimes\sigma)=\log((\sigma\otimes 1)(1\otimes \sigma))=(\log\sigma)\otimes1+1\otimes\log\sigma.$ Hence formula (\ref{Nformula}) follows by considering the surjection $H^1_{\et}(A_{\overline{K}}, \Q_\ell)^{\otimes 2}\to\bigwedge\nolimits^2 H^1_{\et}(A_{\overline{K}}, \Q_\ell).$ Let $e_1,..., e_4$ be a basis of $H^1_{\et}(A_{\overline{K}}, \Q_\ell)\otimes_{\Q_\ell} \overline{\Q}_\ell$ with respect to which $N$ has Jordan normal form. Clearly, if $N=0$ then $N_X=0$. Now suppose that $d=\mathrm{rank}_{\Q_\ell} N=1$. Then we may assume without loss of generality that $N(e_1)=N(e_3)=N(e_4)=0$ and $N(e_2)=e_1.$ In this case, direct calculation shows that $N_X(e_2\wedge e_3)=e_1\wedge e_3$ and $N_X(e_2\wedge e_4)=e_1\wedge e_4$ (so $N_X\not=0$), but that $N_X(e_i\wedge e_j)=0$ in all other cases ($i<j$), so that $N_X^2=0$. Finally, assume that $d=\mathrm{rank}_{\Q_\ell} N=2$. In this case, $N$ has two Jordan blocks of size $2$ (since $N=\sigma-1$ for some topological generator $\sigma$ of the monodromy group, and $(\sigma-1)^2=0$). Again, we can write down $N_X$ explicitly in terms of the induced basis of $\bigwedge^2 H^1_{\et}(A_{\overline{K}}, \Q_\ell)$ and conclude that $N_X^2\not=0$ but $N_X^3=0.$ The calculations will be left to the reader. 
\end{proof}\\
We can now proceed to studying the relationship between the dual complex of a strict Kulikov model of an Abelian surface $A$ and that of the strict Kulikov model of the associated Kummer surface $X$. Suppose that $A$ has semi-Abelian reduction and that $A[2]$ is a constant group scheme over $K$. Let $(G,\mathscr{L}, \mathscr{M})\in \mathrm{DEG}^{\mathrm{split}}_{\mathrm{ample}}$ with $G=\mathscr{A}^0,$ and suppose that the finite group $H=\{\mathrm{Id}, [-1]\}$ operates on this object. Suppose further that $(X,Y,\phi,a,b)$ is the object of $\mathcal{C}$ associated with $(G, \mathscr{L}, \mathscr{M}),$ and assume that there is a smooth $\Gamma$-admissible rational polyhedral cone decomposition $\{\sigma_{\alpha}\}_{\alpha\in I}$ of $\mathscr{C}\subseteq X^\vee_{\mathbf{R}}\oplus \mathbf{R}$ which satisfies properties (a),..., (d) of Theorem \ref{Pexistencetheorem}. By Theorem \ref{Pexistencetheorem} and Corollary \ref{Pkulikovcorollary}, we know that $A$ admits a strict Kulikov model $P$ which is a scheme and which contains the Néron model $\mathscr{A}$ of $A$ as an open subscheme. Furthermore, we know from Theorem \ref{Kummersemistabletheorem} and its proof that $\mathscr{X}=(\mathrm{Bl}_{\mathscr{A}[2]}P)/[-1]$ is a strict Kulikov model of $X.$ This implies that there is a close relationship between the dual complexes of $P$ and $\mathscr{X}$ (and in particular between the numbers of irreducible components of the special fibres of $P$ and $\mathscr{X}):$
\begin{theorem}
Let $A$ be an Abelian surface with strict Kulikov model $P$, as at the beginning of this section. Let $\mathscr{X}$ be the associated strict Kulikov model of the Kummer surface $X$. Let $\Delta_A$, $\Delta_X$ be the dual complexes of the special fibres of $P$ and $\mathscr{X},$ respectively. Then the finite group $H=\{\mathrm{Id}, [-1]\}$ acts on $\Delta_A,$ and the quotient is canonically isomorphic to $\Delta_X.$
\end{theorem}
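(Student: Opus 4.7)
The plan is to pass through the intermediate scheme $\tilde{\mathscr{X}}=\mathrm{Bl}_{\mathscr{A}[2]}P$ from the construction of $\mathscr{X}$ in Theorem~\ref{Kummersemistabletheorem}, and to compare the dual complex of $\tilde{\mathscr{X}}_k$ with $\Delta_A$ before dividing out by $H$.

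First, I would identify the dual complex of $\tilde{\mathscr{X}}_k$ with $\Delta_A$. By Lemma~\ref{basechangelemma}, $\tilde{\mathscr{X}}_k=\mathrm{Bl}_{\mathscr{A}_k[2]}P_k$. Since $\mathscr{A}=P^{\mathrm{sm}}$ by Theorem~\ref{Pexistencetheorem}(i), the sixteen blown-up points all lie in the smooth locus $\mathscr{A}_k$ of $P_k$; because $P_k$ is a strict normal crossings divisor, each such smooth point lies in the interior of a unique irreducible component of $P_k$ and avoids every double and triple intersection of components. Consequently the blow-up simply replaces each component $Y_i$ of $P_k$ by its strict transform $\tilde Y_i$ (the blow-up of $Y_i$ at those two-torsion points which it contains), leaving every $Y_i\cap Y_j$ and every $Y_i\cap Y_j\cap Y_\ell$ unchanged. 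The bijection $\tilde Y_i\leftrightarrow Y_i$ therefore induces a canonical isomorphism of dual complexes $\Delta_{\tilde{\mathscr{X}}}\cong\Delta_A$.

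Next, I would describe the induced $H$-action on this complex. By Theorem~\ref{Pexistencetheorem}(iii) the involution $[-1]$ preserves the stratification of $P_k$ indexed by $I^+$ and acts on it via the action of $H$ on $X^\vee\oplus\mathbf{Z}$ given by $S_{(0,[-1])}(\ell,s)=(-\ell,s)$. Since the blow-up is $H$-equivariant, the same stratification description applies to $\tilde{\mathscr{X}}_k$, and the resulting $H$-action on $\Delta_A$ is cellular. By Lemma~\ref{conefreelemma}, this action is free on the set of $H$-orbits of two- and three-dimensional cones, i.e.\ on the $1$- and $2$-cells of $\Delta_A$, while certain $0$-cells may be fixed.

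The main step is then to show that the scheme-theoretic quotient $\mathscr{X}_k=\tilde{\mathscr{X}}_k/H$ has dual complex isomorphic to the cellular quotient $\Delta_A/H$. For $0$-cells, the components of $\mathscr{X}_k$ correspond bijectively to $H$-orbits of components of $\tilde{\mathscr{X}}_k$: an $H$-fixed component $\tilde Y$ descends to the irreducible scheme $\tilde Y/H$, while a swapped pair $\{\tilde Y,[-1]\tilde Y\}$ descends to a single irreducible component isomorphic to $\tilde Y$. For $1$- and $2$-cells, the freeness granted by Lemma~\ref{conefreelemma} ensures that each $H$-orbit of intersection loci descends to exactly one intersection locus in $\mathscr{X}_k$. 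The étaleness of $\tilde{\mathscr{X}}\to\mathscr{X}$ outside the exceptional divisors of the blow-up (Theorem~\ref{fixedlocustheorem}), combined with the smoothness of the quotient near these divisors (Proposition~\ref{quotientsmoothlemmaii}), then reconciles the scheme-theoretic and combinatorial quotients.

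The main obstacle I anticipate is ruling out any pathological identification that would produce self-incidences in $\Delta_X$, specifically verifying that an $H$-swapped pair of components of $\tilde{\mathscr{X}}_k$ can never share an $H$-fixed intersection cell. Lemma~\ref{conefreelemma} is indispensable here: if the edge of $\Delta_A$ joining two $H$-swapped $0$-cells were fixed by $H$, the corresponding two-dimensional cone $\sigma$ would admit $y\in Y$ with $S_{(y,\mathrm{Id})}(\sigma)=S_{(0,[-1])}(\sigma)$, directly contradicting the lemma; an analogous argument eliminates the parallel pathology for $2$-cells. With these verifications in hand, the cellular isomorphism $\Delta_X\cong\Delta_A/H$ follows.
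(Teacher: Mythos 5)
Your argument is correct and follows the same route the paper takes, which compresses everything into the observation that $\mathrm{Bl}_{\mathscr{A}[2]}P\to P$ induces an isomorphism of dual complexes (your first paragraph) and that the rest is immediate from the construction of $\mathscr{X}$ as $(\mathrm{Bl}_{\mathscr{A}[2]}P)/H$; your use of Lemma \ref{conefreelemma} to rule out fixed edges and $2$-cells is precisely the point the paper relies on implicitly (and makes explicit in the proof of Theorem \ref{Kummersemistabletheorem} when showing the components of $\mathscr{X}_k$ are smooth). Your write-up simply supplies the details behind the paper's one-line proof.
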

\begin{proof}
This is an immediate consequence of the construction of $\mathscr{X},$ and the fact that the canonical map $\mathrm{Bl}_{\mathscr{A}[2]} P\to P$ induces an isomorphism of dual complexes of special fibres.
\end{proof}\\
Now we would like to understand the relationship between the number of irreducible components of special fibres of Néron models and strict Kulikov models. 
\begin{lemma}
Let $A$ be an Abelian surface over $K$ with semiabelian reduction and such that the $K$-group scheme $A[2]$ is constant. Then the group $\Phi$ of connected components of the special fibre $\mathscr{A}_k$ of the Néron model $\mathscr{A}$ of $A$ can be written as
$$\Phi\cong \bigoplus_{i=1}^{t} \Z/d_i\Z$$ such that the integers $d_j$ are even. Here, $t=t(A)$ denotes the toric rank of $A$. 
\end{lemma}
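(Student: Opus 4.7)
The plan is to realize $\Phi$ explicitly as the cokernel of a map between two free Abelian groups of rank $t$, and then to combine the divisibility $b(Y,X)\subseteq 2\Z$ (established in Proposition \ref{evenproposition}) with the Smith normal form.

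First, I would appeal to the classical description of component groups arising from semi-Abelian degenerations (see \cite{FC}, Chapter III, or \cite{SGA7}, Expos\'e IX): since the Abelian part $E$ of the Raynaud extension has good reduction and hence contributes nothing to the group of components, $\Phi$ is canonically isomorphic to the cokernel of the homomorphism
$$\bar{b}\colon Y\longrightarrow X^{\vee},\qquad y\longmapsto b(y,-).$$
Here $y\in Y$ determines $\iota(y)\in \tilde{G}(K)$, which in turn (via the trivialization $\tau$ of the Poincar\'e biextension) determines an element of $X^{\vee}$; by Proposition \ref{bevproposition} this element is precisely $\bar{b}(y)$ (up to sign). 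The map $\bar{b}$ is injective, because $\phi$ is injective and $b(-,\phi(-))$ is positive definite, and both $Y$ and $X^{\vee}$ are free of rank $t$. As a sanity check, for the Tate curve $\Gm/q^{\Z}$ the map $\bar{b}$ is multiplication by $\ord_K q$, recovering the expected $\Phi\cong \Z/\ord_K q\,\Z$.

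Next, I would invoke Proposition \ref{evenproposition} to write $\bar{b}=2\bar{b}'$ for a unique injective homomorphism $\bar{b}'\colon Y\to X^{\vee}$. Choosing bases of $Y$ and $X^{\vee}$ with respect to which $\bar{b}'$ is represented by the Smith normal form $\mathrm{diag}(e_1,\ldots,e_t)$ with $e_i\in \Z_{>0}$, the map $\bar{b}$ is represented by $\mathrm{diag}(2e_1,\ldots,2e_t)$ in the same bases, and consequently
$$\Phi\cong X^{\vee}/\bar{b}(Y)\cong \bigoplus_{i=1}^{t}\Z/2e_i\Z.$$
Setting $d_i:=2e_i$ yields the required decomposition with each $d_i$ even.

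I expect the subtle step to be the identification of $\Phi$ with $\mathrm{coker}\,\bar{b}$: in the purely toric case this follows immediately from the rigid-analytic uniformization $A^{\mathrm{an}}=T^{\mathrm{an}}/Y$, but in the general semi-Abelian case one must carefully check that the good reduction of $E$ contributes nothing to the component group. After this identification is in place, the parity conclusion reduces to the brief diagonalization argument above.
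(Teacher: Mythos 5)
Your proposal is correct and follows essentially the same route as the paper: both identify $\Phi$ with the cokernel of the map $Y\to X^{\vee}$ induced by $b$ via \cite{FC}, Chapter III, Corollary 8.2, and both combine the evenness statement of Proposition \ref{evenproposition} with the elementary divisor theorem. The only cosmetic difference is that you factor the map as $2\bar{b}'$ before taking Smith normal form, whereas the paper diagonalizes $\bar{b}$ directly and then observes that each $\lambda_j e_j\in 2X^{\vee}$ forces $\lambda_j$ to be even.
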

\begin{proof}
Choose a split ample degeneration $(G, \mathscr{L}, \mathscr{M})$ with $G=\mathscr{A}^0$ as before, and let $(X,Y,\phi, a,b)=\mathrm{For}(F((G, \mathscr{L}, \mathscr{M}))).$ The pairing $b\colon Y\times X\to \Z$ induces an injective map $Y\to X^\vee$. By \cite{FC}, Chapter III, Corollary 8.2, we know that $\Phi$ is isomorphic to the cokernel of this map. Now choose a basis $e_1,..., e_t$ of $X^\vee$ such that there exist non-zero integers $\lambda_1,..., \lambda_t$ with the property that $\lambda_1e_1,..., \lambda_te_t$ is a basis of the image of $Y\to X^\vee.$ We know from Proposition \ref{evenproposition} that the image of any element $y\in Y$ in $X$ is of the form $2f$ for some $f\in X^\vee.$ In particular, the $\lambda_je_j$ are of this form, which implies that the integers $\lambda_j$ are even. 
\end{proof}\\
\noindent$\mathbf{Remark.}$ This Lemma, together with Proposition \ref{converseproposition}, implies that requiring the existence of a semistable model $\mathscr{X}\to \Spec\Og_K$ of the Kummer surface $X$ associated with an Abelian surface $A$ is a stronger condition on $A$ than one might initially think. For example, suppose that, for $i=1,2,$ $E_i$ is an elliptic curve over $K$ which has either good reduction or is of type $I_1$ (in Kodaira's notation), and such that at least one of the $E_j$ has bad reduction. Then, if $A:=E_1\times_KE_2,$ $X$ does not admit a semistable model.
\begin{proposition}
Let $\Phi$ denote the group of irreducible components of $\mathscr{A}_k.$ Then the number $N_X$ of irreducible components of the special fibre of $\mathscr{X}$ is equal to 
$$N_X=\#\Phi[2]+\frac{1}{2}\#(\Phi\backslash \Phi[2])=\frac{1}{2}\#\Phi+2^{t-1},$$ where $t=t(A)$ is the toric rank of $A$. \label{Phiproposition}
\end{proposition}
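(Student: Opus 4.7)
The plan is to count irreducible components of $\mathscr{X}_k$ by counting $H$-orbits on the set of irreducible components of $\tilde{\mathscr{X}}_k$, where $\tilde{\mathscr{X}}=\mathrm{Bl}_{\mathscr{A}[2]}P$ as in the proof of Theorem \ref{Kummersemistabletheorem}. Since $\#H=2$ is invertible in $\Og_K$, Lemma \ref{quotientlemma} gives $\mathscr{X}_k=\tilde{\mathscr{X}}_k/H$, and hence irreducible components of $\mathscr{X}_k$ correspond bijectively to $H$-orbits on those of $\tilde{\mathscr{X}}_k$.

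First I would identify the set of irreducible components of $P_k$ with $\Phi$. Since $P^{\mathrm{sm}}=\mathscr{A}$ by Theorem \ref{Pexistencetheorem}(i) and $\mathscr{A}$ is open and dense in $P$, every irreducible component $D$ of $P_k$ meets $\mathscr{A}_k$ in a non-empty open dense subscheme. These subschemes are precisely the $\#\Phi$ connected components of $\mathscr{A}_k$, which are canonically indexed by $\Phi$. Furthermore, because $[-1]$ on $P$ restricts to group inversion on the semiabelian scheme $\mathscr{A}$, it sends the $\phi$-component of $\mathscr{A}_k$ to the $(-\phi)$-component; passing to closures in $P_k$, the induced action on the set of irreducible components of $P_k$ is precisely $\phi\mapsto-\phi$.

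Next, by Lemma \ref{basechangelemma} we have $\tilde{\mathscr{X}}_k=\mathrm{Bl}_{\mathscr{A}_k[2]}P_k$. Since $\mathscr{A}_k[2]$ is a finite set of closed points contained in the smooth locus of $P_k$, blowing them up does not produce new two-dimensional irreducible components: the exceptional loci are one-dimensional projective lines lying inside the strict transforms of the components of $P_k$ that contain the corresponding blow-up centres. Hence $\tilde{\mathscr{X}}_k$ still has $\#\Phi$ irreducible components, indexed by $\Phi$ via strict transforms, and since the morphism $\tilde{\mathscr{X}}\to P$ is $H$-equivariant, $H$ acts on this set by $\phi\mapsto-\phi$ as well.

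Finally, a component is $H$-fixed if and only if $\phi=-\phi$, i.e.\ $\phi\in\Phi[2]$, yielding $\#\Phi[2]$ orbits of size one; the remaining $\#\Phi-\#\Phi[2]$ components pair up into $\tfrac{1}{2}(\#\Phi-\#\Phi[2])$ orbits of size two. Summing these gives
$$N_X=\#\Phi[2]+\tfrac{1}{2}(\#\Phi-\#\Phi[2])=\tfrac{1}{2}\#\Phi+\tfrac{1}{2}\#\Phi[2].$$
By the lemma immediately preceding this proposition, $\Phi\cong\bigoplus_{i=1}^{t}\Z/d_i\Z$ with each $d_i$ even, so $\#\Phi[2]=2^t$, whence $N_X=\tfrac{1}{2}\#\Phi+2^{t-1}$. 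The only conceptually non-trivial step is the identification of components of $P_k$ with $\Phi$ together with the invariance of the component structure of the special fibre under the blow-up of the codimension-two centre $\mathscr{A}[2]$; the rest is elementary orbit counting.
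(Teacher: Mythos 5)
Your proposal is correct and follows essentially the same route as the paper: identify the irreducible components of $P_k$ (equivalently, of $\tilde{\mathscr{X}}_k$, via Lemma \ref{basechangelemma} and the fact that the blow-up centre lies in the smooth locus) $H$-equivariantly with $\Phi$ carrying the action $\phi\mapsto-\phi$, count $H$-orbits, and conclude with $\#\Phi[2]=2^t$ from the preceding lemma. You merely spell out the orbit count and the density arguments that the paper leaves implicit.
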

\begin{proof}
This follows because the morphism $\mathrm{Bl}_{\mathscr{A}[2]}P\to P$ induces a bijection between the sets of irreducible components of the special fibres of those schemes (this follows from Lemma \ref{basechangelemma} together with the fact that the centre of the blow-up is contained in the smooth locus of $P$) which is equivariant with respect to the operations of $H$ on both schemes. Furthermore, we use the fact that the open immersion $\mathscr{A}\to P$ also induces an $H$-equivariant bijection between sets of irreducible components of special fibres. By the Lemma preceding this Proposition, we find that $\#\Phi[2]=2^t,$ so the claim follows. 
\end{proof}
\section{Strict Kulikov models and base change}
Let $L/K$ be a finite extension and let $\Og_L$ be the ring of integers of $L$. Then $\Og_L$ is a strictly Henselian discrete valuation ring, and if $\mathfrak{m}_L$ denotes its maximal ideal, the canonical morphism $k\to \Og_L/\mathfrak{m}_L$ is an isomorphism. Suppose that $A$ is an Abelian surface over $K$ with Néron model $\mathscr{A}$ and associated Kummer surface $X$. Assume (as always) that $A$ has semiabelian reduction and that $A[2]$ is a constant $K$-group scheme. Let $(G, \mathscr{L}, \mathscr{M})$ be a split ample degeneration over $K$ with $G=\mathscr{A}^0$ as before. Further suppose that $(X,Y,  \phi, a, b):=\mathrm{For}(F((G, \mathscr{L}, \mathscr{M})))$ as in Section \ref{Prelimsection} and that there exists a smooth rational polyhedral cone decomposition of $\mathscr{C}\subseteq X^\vee\oplus \Z$ satisfying properties (a),..., (d) from Theorem \ref{Pexistencetheorem}. As in the case of Néron models, taking (strict) Kulikov models does not in general commute with base change. If $\mathscr{A}_L$ denotes the Néron model of $A_L:=A\times_K\Spec L$ over $\Og_L$ then the morphism $\mathscr{A}^0\times_{\Og_K}\Spec\Og_L\to \mathscr{A}_L^0$ is an isomorphism (because $A$ has semiabelian reduction), but the morphism $\mathscr{A}\times_{\Og_K}\Spec\Og_L\to \mathscr{A}_L$ is not in general surjective. In other words, extending the ground field from $K$ to $L$ leads to a Néron model over $\Og_L$ whose special fibre has more irreducible components than the special fibre of that over $K.$ If $\Phi$ denotes the group of connected components of (the special fibre of) $\mathscr{A}$ and $\Phi_L$ that of $L$, then $$\#\Phi_L=e_{L/K}^t\#\Phi,$$ where $t=t(A)$ denotes the toric rank of $A$ and $e_{L/K}$ denotes the index of ramification of the extension $L/K$. This follows, for example, from \cite{FC}, Chapter III, Corollary 8.2. Now let $\mathscr{X}$ be the strict Kulikov model of $X$ from Theorem \ref{Kummersemistabletheorem}. The aim of the present section is to understand how the number of irreducible components of strict Kulikov models of Kummer surfaces changes under base change. More precisely, assume that there exists a smooth rational polyhedral cone decomposition of $\mathscr{C}'\subseteq (X')^\vee_{\mathbf{R}}\oplus \mathbf{R}$ which satisfies the conditions (a),..., (d) from Theorem \ref{Pexistencetheorem}. Here, we denote $\mathrm{For}(F((G, \mathscr{L}, \mathscr{M})\times_K\Spec L))$ by $(X', Y', \varphi', a', b').$ This will always be the case after replacing $L$ by one of its finite extensions by Proposition \ref{semconeexistenceproposition}. We can now prove
\begin{theorem}
Keep the notation from the beginning of this section. Assume that there exists a smooth rational polyhedral cone decomposition of $\mathscr{C}'\subseteq (X')^\vee_{\mathbf{R}}\oplus \mathbf{R}$ which satisfies the conditions (a),..., (d) from Theorem \ref{Pexistencetheorem}, so that the Kummer surface $X_L$ admits a strict Kulikov model $\mathscr{X}_L$ over $\Og_L$, as constructed in Theorem \ref{Kummersemistabletheorem}. If $N$ and $N_L$ denote the number of irreducible components of $\mathscr{X}_k$ and $\mathscr{X}_{L,k}$, respectively, the formula 
$$N_L=e_{L/K}^tN-2^{t-1}(e_{L/K}^t-1)$$ holds.
\end{theorem}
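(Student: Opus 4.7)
The proof is essentially an algebraic manipulation combining two facts we have already established. The plan is as follows.

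First, I would apply Proposition \ref{Phiproposition} to both $\mathscr{X}$ and $\mathscr{X}_L$. Since the toric rank $t=t(A)$ is preserved under the finite base change $L/K$ (because $A$ has semiabelian reduction, so $\mathscr{A}^0\times_{\Og_K}\Spec\Og_L\to \mathscr{A}_L^0$ is an isomorphism, and the toric part is a geometric invariant of this identity component), and since the hypothesis that $A[2]$ be constant is inherited by $A_L[2]=A[2]\times_K L$, the Proposition applies to both models and yields
\begin{align*}
N&=\tfrac{1}{2}\#\Phi+2^{t-1},\\
N_L&=\tfrac{1}{2}\#\Phi_L+2^{t-1},
\end{align*}
where $\Phi$ and $\Phi_L$ are the groups of connected components of $\mathscr{A}_k$ and $\mathscr{A}_{L,k}$, respectively.

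Second, I would invoke the base-change formula $\#\Phi_L=e_{L/K}^t\,\#\Phi$ recalled at the start of the section (coming from \cite{FC}, Chapter III, Corollary 8.2; equivalently, it reflects that under base change the pairing $b$ is replaced by $e_{L/K}\cdot b$, cf. \cite{K�}, (2.9), so the cokernel of $Y\to X^\vee$ is rescaled by $e_{L/K}$ in each of the $t$ summands).

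Substituting this into the expression for $N_L$ and rearranging gives
\[
N_L=\tfrac{1}{2}e_{L/K}^t\#\Phi+2^{t-1}=e_{L/K}^t\bigl(\tfrac{1}{2}\#\Phi+2^{t-1}\bigr)-2^{t-1}e_{L/K}^t+2^{t-1}=e_{L/K}^tN-2^{t-1}(e_{L/K}^t-1),
\]
which is the desired formula. There is no real obstacle here; the only thing one must verify carefully is that the hypotheses of Proposition \ref{Phiproposition}, which in turn relies on Proposition \ref{evenproposition}, remain valid after base change to $L$, but this is immediate from the naturality of the functor $\mathrm{For}\circ F$ under base change and the fact that $A[2]$ constant over $K$ forces $A_L[2]$ constant over $L$.
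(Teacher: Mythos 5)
Your proposal is correct and follows exactly the paper's own argument: apply Proposition \ref{Phiproposition} to both models, use the base-change formula $\#\Phi_L=e_{L/K}^t\#\Phi$, and rearrange. The additional remarks on why the hypotheses of Proposition \ref{Phiproposition} persist after base change are sound and only make the argument more complete than the paper's version.
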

\begin{proof}
Let $\Phi_L$ be the group of connected components of the special fibre of the Néron model of $A\times_K\Spec L.$ By Proposition \ref{Phiproposition}, we know that 
\begin{align*}
N_L&=\frac{1}{2}\#\Phi_L+2^{t-1}\\
&=\frac{1}{2}e_{L/K}^t \#\Phi+2^{t-1}\\
&=e_{L/K}^t(\frac{1}{2}\#\Phi+2^{t-1})-2^{t-1}(e_{L/K}^t-1)\\
&=e_{L/K}^tN-2^{t-1}(e_{L/K}^t-1). 
\end{align*}
\end{proof}

\section{Equivariant Kulikov models of Kummer surfaces, and the monodromy conjecture}
In this section, we shall prove that Kummer surfaces in equal characteristic zero admit \it equivariant Kulikov models. \rm Together with previous work of Halle-Nicaise (\cite{HN3}, Corollary 5.3.3), this will imply that the monodromy conjecture is true for Kummer surfaces. Throughout this section (except for the final corollary), we shall assume that the residue field $k$ of $\Og_K$ is of characteristic zero.
\begin{definition} Let $X$ be a smooth, projective, geometrically integral surface over $K$ with trivial canonical bundle, and let $d$ be a positive integer. An \rm equivariant Kulikov model \it of $X$ over $\Og_{K(d)}$ is a Kulikov model $\mathscr{X}(d)\to \Spec\Og_{K(d)}$ of $X(d):=X\times_K\Spec K(d)$ with the property that the action of $\Gal(K(d)/K)=\boldsymbol{\mu}_d$ extends to $\mathscr{X}(d).$
\end{definition}
For a precise statement of the monodromy conjecture (or rather a refined version thereof), see \cite{HN3}, Definition 2.3.5. Roughly speaking, it can be summarized as follows: If $X$ is a smooth, projective, geometrically integral algebraic variety over $K$ with trivial canonical bundle (generated by a global top-form which we call $\omega$), we can consider the \it motivic Zeta function $Z_{X,\omega}(t),$ \rm which is an element of the ring $\mathcal{M}^{\widehat{\boldsymbol{\mu}}}_k[\![ t ]\!]$, where $\mathcal{M}^{\widehat{\boldsymbol{\mu}}}_k:=K_0^{\widehat{\boldsymbol{\mu}}}(\mathrm{Var}_k)[\boldsymbol{\mathrm{L}}^{-1}].$ Here, $K_0^{\widehat{\boldsymbol{\mu}}}(\mathrm{Var}_k)$ denotes the \it $\widehat{\boldsymbol{\mu}}-$equivariant Grothendieck ring of varieties; \rm see \cite{HN3},  (2.2.1) and Definition 2.3.1 of \it loc. cit. \rm for more details. We shall always use the notation
$$\widehat{\boldsymbol{\mu}}:=\varprojlim_{d\in \N} \boldsymbol{\mu}_d=\Gal(\overline{K}/K),$$ with the indices $d$ ordered by divisibility. Let $\sigma$ be a topological generator of this group. Then $\sigma$ acts on the $\Q_\ell$-vector space $H_{\et}^i(X_{\overline{K}}, \mathbf{Q_\ell}).$ The monodromy conjecture now asserts that $Z_{X, \omega}(t)$ can be written as a polynomial in $t, \Big(\frac{1}{1-\mathbf{L}^at^b}\Big)_{(a,b)\in S\subseteq \Z\times\Z_{>0}}$ with $S$ finite and such that for all $(a,b)\in S$, there is some $i\geq 0$ such that $\exp({2\pi \sqrt{-1}\frac{a}{b}})$ is an eigenvalue of the action of $\sigma$ on $H^i(A_{\overline{K}}, \Q_\ell)$ with respect to any embedding $\Q_\ell\to \mathbf{C}.$ 
\begin{theorem} Let $A$ be an Abelian surface over $K$ with associated Kummer surface $X$. Then there exists $d_0\in \N$ such that $X$ admits an equivariant Kulikov model for all $d_0\mid d.$ \label{equivariantKulikovtheorem}
\end{theorem}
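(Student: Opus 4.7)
The strategy is to adapt the construction of Theorem \ref{Kummersemistabletheorem}, replacing the group $H = \{\mathrm{Id}, [-1]\}$ with the enlarged group $H_d := \{\mathrm{Id}, [-1]\} \times \boldsymbol{\mu}_d$, where $\boldsymbol{\mu}_d = \Gal(K(d)/K)$ acts on the base $\Og_{K(d)}$ via the Galois action. Since the residue field $k$ is algebraically closed of characteristic zero, every finite extension of $K$ is of the form $K(d')$, so I may choose $d_0 \in \N$ such that over $K(d_0)$ the Abelian surface $A_{K(d_0)}$ has semiabelian reduction, $A_{K(d_0)}[2]$ is constant, there exists an object $(G, \mathscr{L}, \mathscr{M}) \in \mathrm{DEG}^{\mathrm{split}}_{\mathrm{ample}}$ with $G = \mathscr{A}^0_{K(d_0)}$ and $H_{d_0}$-action (the line bundles being $\boldsymbol{\mu}_{d_0}$-averaged as in Section \ref{Prelimsection}), and---by Proposition \ref{semconeexistenceproposition} applied with the group $H_{d_0}$---a smooth $(Y \rtimes H_{d_0})$-admissible rational polyhedral cone decomposition of $\mathscr{C}$ satisfying conditions (a)--(d) of Theorem \ref{Pexistencetheorem}.

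Fix $d$ with $d_0 \mid d$ and base change everything to $\Og_{K(d)}$. By functoriality of the N\'eron model, $\boldsymbol{\mu}_d$ acts on $\mathscr{A}^0_{K(d)}$ over the Galois action on $\Og_{K(d)}$. I claim that the induced action on the character lattice $X = X^\ast(T)$ of the toric part of the Raynaud extension factors through the quotient $\boldsymbol{\mu}_d \twoheadrightarrow \boldsymbol{\mu}_{d_0}$: the torus $T$ already descends to $\Og_{K(d_0)}$, and the normal subgroup $\boldsymbol{\mu}_{d/d_0} \subset \boldsymbol{\mu}_d$ acts on $T_{K(d)} = T \times_{\Og_{K(d_0)}} \Og_{K(d)}$ by acting trivially on $T$ and by Galois on the second factor, hence trivially on characters. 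Consequently the cone decomposition chosen above remains $(Y \rtimes H_d)$-admissible, and a routine check shows that conditions (a)--(d) are preserved under the base change $K(d_0) \to K(d)$: the rescaling $b \mapsto (d/d_0) b$ only shrinks the orbits of non-trivial $Y$-translates (preserving (d)), conditions (b) and (c) depend only on the ambient lattice and are manifest, and the polarization function of (a) rescales to a $(d/d_0)\kappa$-twisted one.

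I may now apply Theorem \ref{Pexistencetheorem} with the group $H_d$ and obtain a regular projective model $P(d)$ of $A_{K(d)}$ over $\Og_{K(d)}$ on which $H_d$ acts. The fixed-locus analysis of Theorem \ref{fixedlocustheorem}---which concerns only the action of $[-1]$---goes through unchanged, so forming $\mathscr{X}(d) := (\mathrm{Bl}_{\mathscr{A}_{K(d)}[2]} P(d))/\{\mathrm{Id}, [-1]\}$ exactly as in the proof of Theorem \ref{Kummersemistabletheorem} yields a strict Kulikov model of $X_{K(d)}$. Because $\boldsymbol{\mu}_d$ commutes with $[-1]$ and stabilizes the closed subscheme $\mathscr{A}_{K(d)}[2]$, its action on $P(d)$ descends to $\mathscr{X}(d)$, giving the desired equivariant Kulikov model.

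The principal obstacle is the uniformity of $d_0$: one must verify that a single choice of $d_0$---equivalently, a single combinatorial datum and cone decomposition---serves simultaneously for every $d$ with $d_0 \mid d$. The resolution is precisely the factoring of the $\boldsymbol{\mu}_d$-action on $X$ through $\boldsymbol{\mu}_{d_0}$ established in the second paragraph, which reduces the a priori $d$-dependent admissibility problem to the fixed case $H_{d_0}$ handled by Proposition \ref{semconeexistenceproposition}, together with the monotonicity of conditions (a)--(d) under the base-change rescaling of the pairing $b$.
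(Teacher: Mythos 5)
There is a genuine gap, and it sits exactly at the step you flag as the ``principal obstacle.'' You assert that the single smooth, semistable, $(Y\rtimes H_{d_0})$-admissible cone decomposition chosen over $K(d_0)$ continues to satisfy all the hypotheses of Theorem \ref{Pexistencetheorem} after the base change $K(d_0)\to K(d)$, the pairing rescaling to $(d/d_0)\,b$. But smoothness of the decomposition and the semistability condition (b) cannot both survive a ramified base change without subdividing. Concretely, with $e:=d/d_0$, the transport of structure used in the proof of Proposition \ref{semconeexistenceproposition} is the map $(l,s)\mapsto (l, es)$; under it a smooth semistable two-dimensional cone $\mathbf{R}_{\geq 0}(\ell_1,1)+\mathbf{R}_{\geq 0}(\ell_2,1)$ pulls back to $\mathbf{R}_{\geq 0}(e\ell_1,1)+\mathbf{R}_{\geq 0}(e\ell_2,1)$, whose generators span a sublattice of index $e$ in $X^\vee\oplus\Z$ --- the cone is no longer smooth, so the resulting $P(d)$ is not regular. (If instead you keep the cones fixed and change the integral structure, smoothness can survive but condition (b) fails, which is the familiar fact that strictly semistable models do not remain strictly semistable after ramified base change; this is also forced by the paper's own formula $N_L=e_{L/K}^tN-2^{t-1}(e_{L/K}^t-1)$, which shows the combinatorics must genuinely be refined.) Restoring both properties requires a subdivision that is simultaneously smooth, semistable, $\Gamma$-admissible and polarizable, and producing such a subdivision is exactly the content of K\"unnemann's Theorem 4.7 invoked in Proposition \ref{semconeexistenceproposition} --- which in general demands a \emph{further} ramified extension, destroying the uniformity in $d$ that your argument needs. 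Your factoring of the $\boldsymbol{\mu}_d$-action on $X$ through $\boldsymbol{\mu}_{d_0}$ is correct and relevant, but it does not address this combinatorial obstruction.

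The paper's proof sidesteps the problem by giving up condition (b) in the equivariant setting: for each $d$ with $d_0\mid d$ it works directly over $K(d)$ and invokes K\"unnemann's Theorem 3.5 together with Proposition 3.3 to obtain a regular projective $H\times\boldsymbol{\mu}_d$-equivariant model $P$ whose special fibre is a possibly \emph{non-reduced} divisor with normal crossings; no further base extension is needed because semistability of the decomposition is not required. The quotient $(\mathrm{Bl}_{\mathscr{A}[2]}P)/H$ is then only a Kulikov model in the sense of conditions (i)--(iii) of Definition \ref{Kulikovdefinition}, not a strict one --- which is all the definition of an equivariant Kulikov model asks for, and the paper explicitly disclaims reducedness of the special fibre in the closing Remark. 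Your proposal therefore also overclaims in asserting that $\mathscr{X}(d)$ is a \emph{strict} Kulikov model for every such $d$; as far as the paper's methods go, that is not available uniformly in $d$.
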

\begin{proof}
Since the residue field $k$ is of characteristic zero, all finite extensions of $K$ are of the form $K(d)$ for some positive integer $d$. Choose $d>0$ with the property that $A(d):=A\times_K\Spec K(d)$ has semiabelian reduction and that its 2-torsion is constant over $K(d)$. Let us identify $\Gal(K(d)/K)$ with $\boldsymbol{\mu}_d.$ 
The action of $\boldsymbol{\mu}_d$ on $K(d)$ induces an action on $\Og_{K(d)}$. Furthermore, the canonical action of $\boldsymbol{\mu}_d$ on $A(d)$ extends uniquely to an action of $\boldsymbol{\mu}_d$ on the identity component $\mathscr{A}(d)^0$ of the Néron model $\mathscr{A}(d)$ of $A(d)$ over the action of $\boldsymbol{\mu}_d$ on $\Og_{K(d)}.$ This follows from the universal property of the Néron model. Observe also that the actions of $H=\{\mathrm{Id}, [-1]\}$ and $\boldsymbol{\mu}_d$ on $\mathscr{A}(d)^0$ commute (this holds generically since the action of $H$ is defined over $K$, so it must hold globally). In particular, we obtain an action of $H\times \boldsymbol{\mu}_d$ on $\mathscr{A}(d)^0.$ We also obtain an action of $H\times \boldsymbol{\mu}_d$ on $\Spec \Og_{K(d)}$ (via the second factor), and the two actions are compatible in the obvious way. Now choose a split ample degeneration $(G, \mathscr{L}, \mathscr{M})$ over $K(d)$ with $G=\mathscr{A}(d)^0$. Replacing $\mathscr{L}$ by $\otimes_{(h, \tau)\in H\times\boldsymbol{\mu}_d} (h, \tau)^\ast\mathscr{L}$ (and similarly for $\mathscr{M}$), we may assume that $H\times\boldsymbol{\mu}_d$ acts on the object $(G, \mathscr{L}, \mathscr{M})\in \mathrm{DEG}^{\mathrm{split}}_{\mathrm{ample}}$ over the action of $H\times \boldsymbol{\mu}_d$ on $\Spec \Og_{K(d)}.$ Using \cite{Kü}, Proposition 3.3, we may deduce from \cite{Kü}, Theorem 3.5 that there exists a projective regular model $P$ of $A$ (depending on the choice of a suitable polyhedral cone decomposition $\{\sigma_{\alpha}\}_{\alpha\in I}$ as before) on which $H\times \boldsymbol{\mu}_d$ acts in a way compatible with the action of $\boldsymbol{\mu}_d$ on $\Spec\Og_{K(d)}.$ Furthermore, we know that the special fibre of $P$ is a (not necessarily reduced) divisor whose associated reduced divisor has strict normal crossings, and that the reduced special fibre $(P_k)_{\mathrm{red}}$ of $P$ has a stratification indexed by $I^+/Y$, where $I^+$ is the set of indices whose associated cone is positive-dimensional, and such that the action of $H\times \boldsymbol{\mu}_d$ on $P$ preserves the stratification. Furthermore, the action of $H\times \boldsymbol{\mu}_d$ on the set of strata is given by the action of this group on $I^+/Y.$ From Lemma \ref{conefreelemma}, we know that $H$ cannot fix any points on the special fibre which are not contained in a stratum associated with a cone of the form $\mathbf{R}_{\geq 0}(\ell, 1)$ for some $\ell\in X^\vee.$ But those strata are contained in the smooth locus of $P\to \Spec\Og_{K(d)}.$ By \cite{Kü}, 4.4, we know that the Néron model $\mathscr{A}$ of $A$ is contained in $P$, and by the argument from the proof of Theorem \ref{Pexistencetheorem}, we know that the induced morphism $\mathscr{A}\to P^{\mathrm{sm}}$ is an isomorphism. It now follows (from the same arguments as in the proof of Theorem \ref{Kummersemistabletheorem}) that $\mathscr{Y}:=(\mathrm{Bl}_{\mathscr{A}[2]}P)/H$ is a regular projective model of $X(d)=(\mathrm{Bl}_{A(d)[2]}A(d))/H$, and it is clear that the action of $\boldsymbol{\mu}_d$ on $X(d)$ extends to $\mathscr{Y}.$ Furthermore, we see (as in the proof of Theorem \ref{Kummersemistabletheorem}) that the reduced special fibre $(Y_k)_{\mathrm{red}}$ is a divisor with normal crossings on $\mathscr{Y}.$ In order to see that $\mathscr{Y}$ really is an equivariant Kulikov model, all we have to show is that $\omega_{\mathscr{Y}/\Og_{K(d)}}((\mathscr{Y}_k)_{\mathrm{red}})$ is trivial. Let $U_0:=\mathrm{Bl}_{\mathscr{A}[2]}\mathscr{A}\subseteq \mathrm{Bl}_{\mathscr{A}[2]}P$, and let $U_1:=P\backslash \mathscr{A}[2]$. Then $U_0/H,$ $U_1/H$ form an open cover of $\mathscr{Y}.$ Let $\pi\colon U_0\to \mathscr{A}$ be the morphism given by blowing up, and let $\rho\colon U_0\to U_0/H$ be the quotient map. As in the proof of Theorem \ref{Kummersemistabletheorem}, we can find no-where vanishing global 2-forms $\omega$ and $\beta$ on $\mathscr{A}$ and $U_0/H$, respectively, such that $\pi^\ast\omega=\rho^\ast\beta.$ By \cite{HN3}, Remark 5.1.7, we know that the line bundle $\omega_{P/\Og_K} ((P_k)_{\mathrm{red}})$ is trivial. If we choose a no-where vanishing global section $\eta$ of $\omega_{P/\Og_{K(d)}}((P_k)_{\mathrm{red}})$, we may assume, without loss of generality, that $\eta\mid_{\mathscr{A}}=\omega.$ 
We shall denote the morphism $\mathrm{Bl}_{\mathscr{A}[2]}P\to P$ also by $\pi$ by abuse of notation. Then the 2-form $\pi^\ast\eta$ is invariant under the action of $H$ on $\mathrm{Bl}_{\mathscr{A}[2]}P$, because $\pi^\ast\eta\mid_{\mathrm{Bl}_{\mathscr{A}[2]}\mathscr{A}}=\pi^\ast\omega,$ which is equal to $\rho^\ast\beta$, so it must be $H$-invariant. It follows in particular that $\pi^\ast\eta$ and $[-1]^\ast\pi^\ast\eta$ coincide on a dense open subscheme of $\mathrm{Bl}_{\mathscr{A}[2]}P$, so they must coincide everywhere. Hence $\pi^\ast\eta$ descends to a global section (which we shall also call $\beta$) of $\omega_{\mathscr{Y}/\Og_{K(d)}}((\mathscr{Y}_k)_{\mathrm{red}})$ (at this point we use that the map $U_1\to U_1/H$ is étale, that $U_0\to U_0/H$ is tamely ramified, and that $\pi^\ast \eta$ vanishes along the exceptional divisor). We already know from the arguments presented in the proof of Theorem \ref{Kummersemistabletheorem} that $\beta$ does not vanish on $U_0/H.$ It is clear that $\eta$ does not vanish on $U_1=P\backslash \mathscr{A}[2]$, so $\beta$ vanishes no-where. Hence $\omega_{\mathscr{Y}/\Og_{K(d)}}((\mathscr{Y}_k)_{\mathrm{red}})\cong \Og_{\mathscr{Y}}.$ Note that there exists $d_0\in \N$ which is minimal with the property that $A$ acquires semistable reduction and $A[2]$ becomes constant over $K(d_0).$ Hence the Theorem follows.
\end{proof}
\begin{corollary}
Let $X$ be a Kummer surface over $K$ (recall that the residue field $k$ of $\Og_K$ has characteristic 0). Then $X$ satisfies the monodromy property (\cite{HN3}, Definition 2.3.5). 
\end{corollary}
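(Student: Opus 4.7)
The plan is to reduce the statement entirely to the combination of two already-available results: Theorem \ref{equivariantKulikovtheorem} of this paper, which constructs equivariant Kulikov models, and Halle-Nicaise's Corollary 5.3.3 of \cite{HN3}, which deduces the monodromy property from the existence of such equivariant models for a K3 surface with trivial canonical bundle in equal characteristic zero. Since the Kummer surface $X$ is a K3 surface (hence in particular smooth, projective, geometrically integral, with trivial canonical bundle), these hypotheses apply to $X$ verbatim.

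First I would apply Theorem \ref{equivariantKulikovtheorem} to produce a positive integer $d_0$ such that for every multiple $d$ of $d_0$, the base change $X(d) = X\times_K\Spec K(d)$ admits a Kulikov model $\mathscr{X}(d)\to\Spec\Og_{K(d)}$ on which the natural Galois action of $\boldsymbol{\mu}_d = \Gal(K(d)/K)$ extends. This is precisely the input data required by the formalism of Halle-Nicaise: a cofinal family of tame extensions $K(d)/K$ over which $X$ acquires a Kulikov model compatible with the Galois action.

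Second, I would invoke \cite{HN3}, Corollary 5.3.3, which states that for a K3 surface over $K$ (equal characteristic zero) admitting equivariant Kulikov models in the above sense, the motivic zeta function $Z_{X,\omega}(t)$ can be written as a polynomial in $t$ and expressions of the form $\tfrac{1}{1-\boldsymbol{\mathrm{L}}^a t^b}$ for $(a,b)$ ranging over a finite set $S$, with the property that for each $(a,b)\in S$, the number $\exp(2\pi\sqrt{-1}\,a/b)$ is a monodromy eigenvalue on some $H^i_{\et}(X_{\overline{K}}, \Q_\ell)$. This is exactly the statement of the monodromy property of \cite{HN3}, Definition 2.3.5.

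The only substantive verification will be to check that the hypotheses of \cite{HN3}, Corollary 5.3.3 are met by the models produced by Theorem \ref{equivariantKulikovtheorem}: namely that the Kulikov models are Galois-equivariant in the precise sense required in \emph{loc.\ cit.}, and that $X$ is of the type (K3 with trivial canonical bundle, equal characteristic zero) to which that corollary applies. Both are immediate from the construction in Theorem \ref{equivariantKulikovtheorem} (where the $\boldsymbol{\mu}_d$-action on $\mathscr{X}(d)$ is produced explicitly) and from the standing assumption in this section that $\mathrm{char}\,k=0$. There is no further genuine obstacle, since the hard work has already been carried out in Theorem \ref{equivariantKulikovtheorem}; the corollary is essentially a citation.
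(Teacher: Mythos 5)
Your proposal is correct and follows exactly the paper's own proof, which likewise deduces the corollary by combining Theorem \ref{equivariantKulikovtheorem} with \cite{HN3}, Corollary 5.3.3. The extra verification you describe (that the equivariance produced in Theorem \ref{equivariantKulikovtheorem} matches the hypotheses of \emph{loc.\ cit.}) is exactly the content the paper leaves implicit.
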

\begin{proof}
This follows from \cite{HN3}, Corollary 5.3.3 together with Theorem \ref{equivariantKulikovtheorem}.
\end{proof}
Using a similar method, we may also deduce the following Corollary, for which we shall only assume that the residue field $k$ of $\Og_K$ have characteristic different from 2 (in other words, $p=\mathrm{char}\, k>2$ as well as $\mathrm{char}\, k=0$ are allowed).
\begin{corollary}
Let $A$ be an Abelian surface over $K$ and let $X$ be the associated Kummer surface. Assume that $\Gal(\overline{K}/K)$ acts unipotently on $H^2_{\et}(X_{\overline{K}}, \Q_\ell)$ for some $\ell\not=p.$ Then $X$ admits a Kulikov model $\mathscr{X}\to \Spec \Og_K$ which is a scheme. Note that we do not make any assumptions about the reduction of $A$ over $\Og_K.$ In particular, $X$ admits a Kulikov model over $K$ which is a scheme as soon as $X$ has semistable reduction.
\end{corollary}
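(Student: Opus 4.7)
The plan is to combine the cohomological analysis underlying Propositions \ref{converseproposition} and \ref{quadratictwistproposition} with K\"unnemann's model construction applied directly over $\Og_K$, without the base change from Proposition \ref{semconeexistenceproposition} that was previously used to secure \emph{strict} semistability.

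The first step is a reduction to the case where $A$ has semiabelian reduction and $A[2]$ is constant, without any extension of the ground field. By Lemma \ref{Kummercohomologylemma} there is a Galois-equivariant decomposition $H^2_{\et}(X_{\overline{K}}, \Q_\ell) = \bigwedge^2 H^1_{\et}(A_{\overline{K}}, \Q_\ell) \oplus W_\ell(-1)$. Unipotent action on $W_\ell(-1)$ forces trivial action on $W_\ell$ (since $\Og_K$ is strictly Henselian the cyclotomic character is trivial, and a unipotent permutation representation must be trivial), so $A[2]$ is constant. Unipotent action on $\bigwedge^2 H^1_{\et}(A_{\overline{K}}, \Q_\ell)$, combined with the linear-algebra lemma preceding Proposition \ref{quadratictwistproposition}, produces a continuous quadratic character $q\colon \Gal(\overline{K}/K) \to \{\pm 1\}$ such that $A^q$ has semiabelian reduction; since $A$ and $A^q$ share the same Kummer surface $X$, one replaces $A$ by $A^q$ from here on.

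Next, the plan is to apply K\"unnemann's construction over $\Og_K$ itself. Choose a split ample degeneration $(G,\mathscr{L},\mathscr{M})$ with $G = \mathscr{A}^0$ on which $H=\{\mathrm{Id}, [-1]\}$ acts, and take the smooth $\Gamma$-admissible rational polyhedral cone decomposition admitting a $\kappa$-twisted $\Gamma$-admissible polarization function and satisfying conditions (a), (c), (d) of Theorem \ref{Pexistencetheorem}. Such a decomposition is available before any base change, as recalled in the proof of Proposition \ref{semconeexistenceproposition}; what the base change secured there was only condition (b). The resulting scheme $P$ is proper, flat, regular and projective over $\Og_K$, carries an extension of the $H$-action, has smooth locus $P^{\mathrm{sm}}$ equal to $\mathscr{A}$, and has special fibre whose underlying reduced subscheme is a divisor with strict normal crossings (although individual components may occur with multiplicities $>1$). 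Lemma \ref{conefreelemma} makes no use of the semistability of the decomposition, so it still implies that $H$ acts freely on the $Y$-orbit set of all cones of dimension $\geq 2$ together with one-dimensional cones whose primitive element is $(\ell,s)$ with $s>1$; the argument of Theorem \ref{fixedlocustheorem} then identifies the fixed locus of $[-1]$ on $P$ with $\mathscr{A}[2]$. Setting $\mathscr{X}:=(\mathrm{Bl}_{\mathscr{A}[2]} P)/[-1]$, the open-cover argument of Theorem \ref{Kummersemistabletheorem} (with $U_0=\mathrm{Bl}_{\mathscr{A}[2]}\mathscr{A}$ and $U_1=P\setminus \mathscr{A}[2]$, using that $U_0/H$ is smooth over $\Og_K$ by Proposition \ref{quotientsmoothlemmaii} and that $H$ acts freely on $U_1$) produces a regular projective scheme over $\Og_K$ whose reduced special fibre is a divisor with normal crossings.

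The main obstacle will be the triviality of $\omega_{\mathscr{X}/\Og_K}((\mathscr{X}_k)_{\mathrm{red}})$ required by condition (iii) of Definition \ref{Kulikovdefinition}. The key input is \cite{HN3}, Remark 5.1.7, which asserts that the logarithmic canonical bundle $\omega_{P/\Og_K}((P_k)_{\mathrm{red}})$ is trivial for K\"unnemann's models regardless of whether the cone decomposition is semistable. Choosing a nowhere-vanishing global section $\eta$ whose restriction to $\mathscr{A}\subseteq P^{\mathrm{sm}}$ is a translation-invariant $2$-form, the pull-back--quotient argument from the proof of Theorem \ref{equivariantKulikovtheorem} carries over: the pull-back of $\eta$ to $\mathrm{Bl}_{\mathscr{A}[2]}P$ is $H$-invariant and descends, via the \'etale quotient $U_1\to U_1/H$ and the tamely ramified quotient $U_0\to U_0/H$, to a nowhere-vanishing global section of $\omega_{\mathscr{X}/\Og_K}((\mathscr{X}_k)_{\mathrm{red}})$. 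The final sentence of the corollary is then immediate from Lemma \ref{wellknownlemma}(i), since semistable reduction of $X$ automatically forces the Galois action on $H^2_{\et}(X_{\overline{K}}, \Q_\ell)$ to be unipotent.
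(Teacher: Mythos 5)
Your proposal is correct and follows essentially the same route as the paper: reduce to the semiabelian, $A[2]$-constant case via the quadratic twist and the unipotence of the Galois action on $W_\ell$, then run K\"unnemann's construction over $\Og_K$ with a cone decomposition that need not be semistable, and obtain triviality of the logarithmic canonical bundle from \cite{HN3}, Remark 5.1.7. The paper states this more tersely (``proceed as in the proof of the previous Theorem, using the trivial group instead of $\boldsymbol{\mu}_d$''), and your write-up simply makes explicit the same steps, including the correct observation that Lemma \ref{conefreelemma} does not require semistability of the fan.
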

\begin{proof}
By Proposition \ref{quadratictwistproposition}, we may assume without loss of generality that $A$ has semiabelian reduction already over $K$. Furthermore, Proposition \ref{converseproposition} implies that $A[2]$ is a constant $K$-group scheme. (In both Propositions, although we assumed that $X$ have semistable reduction, the only fact we used in their proofs was that the Galois action on $H^2_{\et}(X_{\overline{K}}, \Q_\ell)$ is unipotent. Hence one sees easily that the conditions of both Propositions can be replaced by those of the Corollary). In the proof of the preceding Theorem, we had to extend the ground field in order to be able to make these assumptions on $A$, and the assumption that $k$ have characteristic 0 was only used during this first step, to ensure that all finite extensions of $K$ are of a particular form. Hence we may proceed as in the proof of the previous Theorem, using the trivial group instead of $\boldsymbol{\mu}_d.$
\end{proof}\\
$\mathbf{Remark.}$ In the preceding Corollary, we do not claim that the Kulikov model $\mathscr{X}\to \Spec \Og_K$ is a strict Kulikov model. In other words, the Corollary does not state that the special fibre of $\mathscr{X}\to \Spec \Og_K$ is reduced.\\
\\
$\mathbf{Acknowledgement.}$ The author is very grateful to Professor Alexei Skorobogatov for suggesting that he study degenerations of K3 surfaces, as well as for reading an earlier version of this paper and suggesting various changes which greatly improved the presentation of this paper. He would also like to express his gratitude to Dr. Johannes Nicaise for several helpful conversations, and particularly for pointing out to him that something along the lines of Corollary \ref{Pkulikovcorollary} might be true. This work was supported by the Engineering and Physical Sciences Research Council [EP/L015234/1], and the EPSRC Centre for Doctoral Training in Geometry and Number Theory (London School of Geometry and Number Theory), University College London.

\end{document}